\newcommand{\addresseshere}{%
  \enddoc@text\let\enddoc@text\relax
}
\numberwithin{equation}{section}
\newtheorem{theorem}{Theorem}[section]
\newtheorem{proposition}[theorem]{Proposition}
\newtheorem{lemma}[theorem]{Lemma}
\newtheorem{corollary}[theorem]{Corollary}
\theoremstyle{definition}
\newtheorem{remark}[theorem]{Remark}
\newtheorem{example}[theorem]{Example}
\newcommand{\rk}{\mathrm{rk}}
\newcommand{\vol}{\mathrm{vol}}
\newcommand{\discel}{\mu}
\newcommand{\discelone}{\alpha_1}
\newcommand{\disceltwo}{\alpha_2}
\newcommand{\discelgen}{\alpha_i}
\newcommand{\weil}[1]{\rho_{#1}^*}
\newcommand{\ZZ}{\mathbb{Z}}
\newcommand{\RR}{\mathbb{R}}
\newcommand{\QQ}{\mathbb{Q}}
\newcommand{\CC}{\mathbb{C}}
\newcommand{\HH}{\mathbb{H}}
\newcommand{\Sp}{\mathrm{Sp}}
\newcommand{\Mp}{\mathrm{Mp}}
\newcommand{\bigO}{\mathrm{O}}
\newcommand{\Pic}{\mathrm{Pic}}
\newcommand{\KMtheta}[1]{\Theta_{\operatorname{KM},#1}}
\newcommand{\KMschwartz}{\varphi_{\operatorname{KM}}}
\newcommand{\KMschwartzg}[1]{\varphi_{\operatorname{KM},#1}}
\newcommand{\trg}[3]{\mathrm{tr}_{#1,#2/#3}}
\DeclareMathOperator{\trace}{tr}
\newcommand{\Kmax}{K_{\mathrm{max}}}
\newcommand{\genus}{g}
\newcommand{\halfint}{\mathcal{S}}
\begin{document}\baselineskip=15pt

\begin{center}
\title[Extremal divisors on moduli of K3 surfaces]{Extremal divisors on moduli spaces of K3 surfaces}

\author{Ignacio Barros}
\address{\parbox{0.9\textwidth}{
Department of Mathematics\\[1pt]
Universiteit Antwerpen\\[1pt]
Middelheimlaan 1, 2020 Antwerpen, Belgium
\vspace{1mm}}}
\email{{ignacio.barros@uantwerpen.be}}

\author{Laure Flapan}
\address{\parbox{0.9\textwidth}{
Department of Mathematics\\[1pt]
Michigan State University\\[1pt]
619 Red Cedar Road, East Lansing, MI 48824, USA
\vspace{1mm}}}
\email{{flapanla@msu.edu}}

\author{Riccardo Zuffetti}
\address{\parbox{0.9\textwidth}{
Fachbereich Mathematik\\[1pt]
Technische Universität Darmstadt\\[1pt]
Schlossgartenstraße 7, D–64289 Darmstadt, Germany
\vspace{1mm}}}
\email{{zuffetti@mathematik.tu-darmstadt.de}}

\subjclass[2020]{14J15, 11F27, 14J28, 14J42, 14C20, 14E30.}
\keywords{K3 surfaces, moduli spaces, effective divisors, Heegner divisors.}
\thanks{I.B. was supported by the Research Foundation – Flanders (FWO) within the framework of the Odysseus program project number G0D9323N and by the Deutsche Forschungsgemeinschaft (DFG, German Research Foundation) - Project-ID 491392403 – TRR 358. 
L.F.~was supported by NSF grant DMS-2200800.
R.Z.~was supported by Deutsche Forschungsgemeinschaft (DFG, German Research Foundation) through the Collaborative Research Centre TRR 326 \textit{Geometry and Arithmetic of Uniformized Structures}, project number 444845124.}

\maketitle
\end{center}

\vspace{-0.7cm} 

\begin{abstract}
We establish criteria for when Noether--Lefschetz divisors generate an extremal ray in the cone of pseudoeffective divisors of an orthogonal modular variety.
In particular, we exhibit many extremal rays of the cone of pseudoeffective divisors on any moduli space~$\mathcal{F}_{2d}$ of quasi-polarized K3 surfaces of degree $d$, as well as on any normal projective $\mathbb{Q}$-factorial compactification $\overline{\mathcal{F}}_{2d}$ of $\mathcal{F}_{2d}$ lying over the Baily--Borel compactification.
\end{abstract}

\setcounter{tocdepth}{1} 
\tableofcontents

\vspace{-0.5cm} 

\section{Introduction}

An important, yet notoriously difficult invariant for understanding the birational geometry of an algebraic variety $X$ is its cone of pseudoeffective divisors $\overline{{\rm Eff}}(X)$. In the case when $X$ is a moduli space, these cones have been extensively studied, yet the available results are still quite limited. For the moduli space of stable curves $\overline{\mathcal{M}}_g$, there is only a full description of the generators of the effective cone when $g\leq 3$, see \cite{Rul01}. In higher genus, extremal divisors in ${\rm{Pic}}\left(\overline{\mathcal{M}}_g\right)_{\mathbb{R}}$ that are supported on the interior of~$\overline{\mathcal{M}}_g$ are known only when $g\leq 11$, see \cites{HM90, CR91, Tan98, FP05, CHS08}. Similarly, for the (suitably compactified) moduli space of principally polarized abelian varieties $\overline{\mathcal{A}}_g$, extremal divisors supported on the interior are known only for $g\leq 5$, see \cite{SM92, FGSMV14}.

In this paper, our main interest is when $X$ is an orthogonal modular variety, with a special focus on the case when $X$ is a moduli space $\mathcal{F}_{2d}$ of primitively (quasi)-polarized K3 surfaces $(S,H)$ of degree $H^2=2d$. The study of $\overline{{\rm Eff}}(\mathcal{F}_{2d})$ was initiated in \cites{Pet15, BLMM17, BM19} (see also \cite{zuffetti-cod2, zuffetti-equidistribution, BBFW24}), yet the only known examples of extremal rays are for $d=1,2$. The method used in these examples is to compare the GIT and Baily--Borel models of $\mathcal{F}_{2d}$, see \cites{Sha80, Loo86, LO18, LO19, ADL23}, and relies on having a concrete birational model for $\mathcal{F}_{2d}$ such as those in low $d$ constructed in \cites{Muk88, Muk92, Muk06, Muk10, Muk16}. Recall however that for large $d$, the moduli space $\mathcal{F}_{2d}$ is of general type for large $d$, see \cite{GHS07}.

The main results of our paper give a criterion for the extremality of the irreducible components of Noether--Lefschetz divisors on $\mathcal{F}_{2d}$, which allow us to produce extremal rays of $\overline{{\rm Eff}}(\mathcal{F}_{2d})$ for all $d$, with the number of such rays increasing with $d$. Our approach relies on the relationship between orthogonal modular varieties and automorphic forms and has the advantage that we can study the positivity of Noether--Lefschetz divisors for arbitrarily large $d$, when no geometric models are available. Our results recover the previously known examples of extremal divisors on $\mathcal{F}_{2d}$ when $d=1,2$. Moreover, our criterion produces extremal divisors for any orthogonal modular variety, including moduli spaces of (lattice) polarized hyperk\"{a}hler manifolds.

For us, the convex cone $\overline{{\rm Eff}}(X)$ is the closure in $\mathrm{Pic}_{\mathbb{R}}(X)$ of the cone of effective $\mathbb{R}$-divisors on $X$ up to linear equivalence.
When $X$ is normal, projective, and $\mathbb{Q}$-factorial, and ${\rm{Pic}}^0(X)$ is trivial (for instance if $b_1(X)=0$), this definition of $\overline{{\rm Eff}}(X)$ coincides with the usual one---the closure in ${\rm{NS}}(X)_{\mathbb{R}}$ of the cone of effective divisors.

Noether--Lefschetz divisors $D_{h,a}\subset\mathcal{F}_{2d}$ are a
natural collection of modular divisors on $\mathcal{F}_{2d}$, defined as the reduced divisors given by the closure of the locus of quasi-polarized K3 surfaces $(S,H)$ with a class $\beta\in {\rm{Pic}}(S)$ not proportional to $H$ such that $\beta^2=2h-2$ and $\beta\cdot H=a$. One can always assume $0\leq a<2d$, and by the Hodge Index Theorem, the determinant of the intersection matrix of $\beta$ and $H$ must be negative.

A special consequence of the main results of the present paper is the following extremality criterion, which works for any moduli space~$\mathcal{F}_{2d}$.

\begin{theorem}
\label{intro:thm:main_K3}
 If the Noether--Lefschetz divisor $D_{h,a}\subset\mathcal{F}_{2d}$ satisfies
\[
\left(\frac{a^2}{4d}-(h-1)\right)^2\cdot\frac{8d}{\gcd(a,2d)}<1,
\]
then 
\begin{enumerate}
    \item 
Every irreducible component of $D_{h,a}$ is extremal in $\overline{\rm{Eff}}(\mathcal{F}_{2d})$. \item Every irreducible component of the closure 
$\overline{D}_{h,a}$ in $\overline{\mathcal{F}}_{2d}$ 
 is extremal in $\overline{\rm{Eff}}\big(\overline{\mathcal{F}}_{2d}\big)$, where $\overline{\mathcal{F}}_{2d}\longrightarrow \mathcal{F}_{2d}^{BB}$ is any  normal projective $\mathbb{Q}$-factorial variety compactifying~$\mathcal{F}_{2d}$ over the Baily--Borel model $\mathcal{F}_{2d}^{BB}$.
 \end{enumerate}
\end{theorem}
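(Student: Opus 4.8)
The plan is to deduce extremality from \emph{rigidity}, and rigidity from a single self-intersection inequality. I would first recall the elementary fact that on a normal, projective, $\mathbb{Q}$-factorial variety $X$, a prime divisor $D$ whose restriction $\mathcal{O}_X(D)|_{\widetilde D}$ to its normalization is not pseudoeffective is rigid and $[D]$ spans an extremal ray of $\overline{\rm{Eff}}(X)$. Indeed, if $D'\equiv mD$ is effective, then writing $D'=aD+D''$ with $D''$ effective and not containing $D$ and restricting to $\widetilde D$ exhibits $(m-a)\,\mathcal{O}_X(D)|_{\widetilde D}$ as the class of an effective $\mathbb{Q}$-divisor, forcing $a=m$, hence $D'=mD$ and $h^0(X,mD)=1$ for all $m\ge 1$; extremality of a prime rigid divisor then follows from subadditivity of the negative part in the divisorial Zariski decomposition (using $b_1=0$, so that numerical and linear equivalence agree). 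Applying this with $X=\overline{\mathcal{F}}_{2d}$ and $D$ an irreducible component of $\overline{D}_{h,a}$, and using that the Hodge line bundle $\lambda$ restricts on $D$ to the Hodge bundle of the orthogonal modular variety $D$ — which is nef, being pulled back from the Baily--Borel model — a pseudoeffective class on $\widetilde D$ pairs non-negatively with $\lambda|_{\widetilde D}^{17}$. Thus the theorem reduces to proving, for every component $D$ of $\overline{D}_{h,a}$,
\[
D^{2}\cdot\lambda^{17}\;=\;\int_{\widetilde D}\bigl(\mathcal{O}_{\overline{\mathcal{F}}_{2d}}(D)|_{\widetilde D}\bigr)\cdot\lambda|_{\widetilde D}^{17}\;<\;0 .
\]

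Two packaging points take care of the role of the compactification. Since $\lambda$ is pulled back from $\mathcal{F}_{2d}^{BB}$, the projection formula identifies $D^{2}\cdot\lambda^{17}$ with a number computed on $\mathcal{F}_{2d}^{BB}$ alone — contributions of cycles supported over the codimension-$\ge 2$ Baily--Borel boundary die against $\lambda^{17}$ — so the inequality, hence part (2), is independent of the chosen $\mathbb{Q}$-factorial model $\overline{\mathcal{F}}_{2d}$. For part (1) I would take $\overline{\mathcal{F}}_{2d}$ to be a small $\mathbb{Q}$-factorialization of $\mathcal{F}_{2d}^{BB}$ (which exists since $\mathcal{F}_{2d}^{BB}$ is klt): being an isomorphism in codimension one, it identifies $\mathrm{Pic}_{\mathbb{R}}$ and $\overline{\rm{Eff}}$ of $\overline{\mathcal{F}}_{2d}$ with those of $\mathcal{F}_{2d}$ compatibly with the relevant divisor classes, so the extremality transfers.

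The core is the self-intersection computation. A component $D$ of $\overline{D}_{h,a}$ is the modular variety attached to $v^{\perp}$ for a primitive $v\in L_{2d}$ of norm $2m$, where $m=(h-1)-\tfrac{a^{2}}{4d}<0$ and the divisibility of $v$ is governed by $\gcd(a,2d)$. Using functoriality of the Kudla--Millson theta lift — equivalently, Borcherds products together with Kudla's formula for the intersection of two special divisors — one finds that $\mathcal{O}_{\overline{\mathcal{F}}_{2d}}(D)|_{D}$ is the sum of an explicit \emph{negative} multiple of $\lambda|_{D}$, arising from the improper self-intersection along $\pm v$ and proportional to $1/|v^{2}|=1/2|m|$, and an \emph{effective} combination of lower Noether--Lefschetz divisors on $D$ coming from the proper intersections. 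Pairing with $\lambda|_{D}^{17}$ and evaluating the resulting Heegner-divisor volumes by the Siegel--Weil formula (equivalently, via Hirzebruch--Mumford volumes and Fourier coefficients of Siegel--Eisenstein series) writes $D^{2}\cdot\lambda^{17}$ as a negative main term plus a sum of non-negative corrections, and the hypothesis $\bigl(\tfrac{a^{2}}{4d}-(h-1)\bigr)^{2}\cdot\tfrac{8d}{\gcd(a,2d)}=m^{2}\cdot\tfrac{8d}{\gcd(a,2d)}<1$ is exactly what forces the main term to dominate.

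I expect this estimate to be the main obstacle. One must keep precise track of the discriminant form of $L_{2d}$, of the $\mathbb{Z}$-divisibility of $v$ (the source of the factor $\gcd(a,2d)$), and of the contributions of all Gram matrices with the prescribed diagonal, in order to establish negativity \emph{uniformly over the components} of $D_{h,a}$ rather than merely on average. The remaining ingredients — the reduction to rigidity, the model-independence, and the comparison between $\mathcal{F}_{2d}$ and its compactifications — are formal.
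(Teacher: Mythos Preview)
Your architecture is essentially the paper's: reduce extremality of a component $D$ to the single inequality $D^{2}\cdot\lambda^{17}<0$ (equivalently $\deg(\varphi^{*}P_{-m,\mu})<0$, or $\mathrm{vol}(P_{-m,\mu}^{2})<0$), and then compute the latter by Kudla's restriction formula for the Kudla--Millson theta function together with the identification of Heegner-divisor degrees with Eisenstein coefficients. The paper packages the first step via a moving-curve criterion adapted from Chen--Coskun (the curve being the push-forward of $\lambda_{L}^{17}$ from a resolution of $\mathcal{F}_{L}$, checked to avoid the boundary), whereas you go through non-pseudoeffectivity of $\mathcal{O}(D)|_{\widetilde D}$ and the divisorial Zariski decomposition. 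These are equivalent routes to the same numerical test; the paper's formulation handles the open variety $\mathcal{F}_{2d}$ and an arbitrary $\mathbb{Q}$-factorial compactification in one lemma, while yours needs the small $\mathbb{Q}$-factorialization detour for part~(1).

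Your sketch of the computation, however, contains a wrong detail and misses the actual mechanism that makes the bound work. In the pullback $\varphi^{*}H_{-m,\mu}$, the $t=0$ term contributes $\theta(m,\mu_{K})\cdot H^{L}_{0,0}=-\theta(m,\mu_{K})\cdot\lambda_{L}$ with $\theta(m,\mu_{K})\in\{1,2\}$ (equal to $2$ exactly when $2\mu=0$); the negative ``main term'' is thus $-\theta(m,\mu_{K})\cdot\mathrm{vol}(\mathcal{F}_{L})$, not something proportional to $1/|v^{2}|$. More importantly, the hypothesis is not used as an estimate in which the main term merely dominates. The correction terms are indexed by $t\in\mathbb{Z}-q(\alpha_{L})$ with $0<t\le m$; since $\alpha_{L}$ has order dividing $\kappa=2m\,d_{\mu}$ in $D(L)$, one has $t\in\{\,i/(2\kappa):0<i\le 4m^{2}d_{\mu}\,\}$. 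The assumption $4m^{2}d_{\mu}<1$ (i.e.\ $m^{2}\cdot\tfrac{8d}{\gcd(a,2d)}<1$) makes this index set \emph{empty}, so the entire correction sum vanishes and the inequality $0<\theta(m,\mu_{K})$ holds trivially. That vanishing---not a size comparison---is the point, and it is what your outline should supply in place of the heuristic about $1/|v^{2}|$.
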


\begin{example}
By Theorem \ref{intro:thm:main_K3} the irreducible unigonal divisor $D_{1,1}\subset \mathcal{F}_{2d}$ is extremal for all $d$. The same holds (see Theorems \ref{thm: extremality} and \ref{thm: numerical bounds}) for the components of the hyperelliptic divisor $D_{1,2}\subset \mathcal{F}_{2d}$ whose general point consists of a polarized K3 surfaces~$(S,H)$ where the linear system $\left|H\right|$ gives a $2$-to-$1$ map $S\longrightarrow \mathbb{F}_a\subset \mathbb{P}^{d+1}$ to a rational normal scroll ramified over a curve $B\in\lvert-2K_{\mathbb{F}_a}\rvert$, see \cite{Rei76}. We refer to Table \ref{tableoutmethod2:2} for more examples.
\end{example}

In fact, rather than working only with the moduli space $\mathcal{F}_{2d}$, we work over the more general setting of \emph{orthogonal modular varieties} and the question of extremality of irreducible components of {\textit{Heegner divisors}}. Theorem \ref{intro:thm:main_K3} above is a direct consequence of Theorem \ref{thm: extremality} below. It also follows from the explicit numerical bounds obtained in Theorem \ref{thm: numerical bounds}. As a consequence, one also obtains extremality criteria for Noether--Lefschetz divisors on moduli spaces of lattice-polarized K3 surfaces and hyperk\"{a}hler manifolds.

\subsection{The general setting}
Let $\Lambda$ be an even lattice of signature $(2,n)$.
The associated orthogonal modular variety
\[\mathcal{F}_\Lambda=\mathcal{D}_\Lambda/\widetilde{\rm{O}}^+(\Lambda)\]
is the quotient of one connected component~$\mathcal{D}_\Lambda$ of the Type IV Hermitian symmetric domain
\[
\left\{[Z]\in \mathbb{P}\left(V_\mathbb{C}\right)\left|\langle Z,Z\rangle=0, \langle Z,\overline{Z}\rangle>0 \right.\right\}
\]
by the discriminant kernel~$\widetilde{\rm{O}}^+(\Lambda)$ of the group of automorphisms of~$\Lambda$ fixing~$\mathcal{D}_\Lambda$.

Examples of orthogonal modular varieties of geometric nature include moduli spaces of polarized K3 surfaces, hyperk\"{a}hler varieties, (special) cubic fourfolds, and $(1,d)$-polarized abelian surfaces.
For instance, if $\Lambda$ arises as the lattice $c_1(H)^\perp\subset H^2(S, \mathbb{Z})$ associated to a primitively polarized K3 surface $(S,H)$ of degree $H^2=2d$, then $\Lambda$ is isomorphic to $U^{\oplus 2}\oplus E_8(-1)^{\oplus 2}\oplus A_1(-d)$.
The corresponding modular variety $\mathcal{F}_{\Lambda}$ is a coarse moduli space for primitively quasi-polarized K3 surfaces of degree $2d$. 

The main goal of this paper is to describe extremal rays of the cone of pseudoeffective divisors~$\overline{{\rm Eff}}(\mathcal{F}_\Lambda)\subset {\rm{Pic}}_\mathbb{R}\left(\mathcal{F}_{\Lambda}\right)$ on an orthogonal modular variety $\mathcal{F}_\Lambda$ as well as extremal rays of $\overline{{\rm Eff}}(\overline{\mathcal{F}}_\Lambda)$ on  compactifications~$\overline{\mathcal{F}}_\Lambda$ of $\mathcal{F}_\Lambda$. 

\textit{Heegner divisors} on $\mathcal{F}_\Lambda$ are $\QQ$-Cartier effective divisors arising as images of certain hyperplane arrangements via the quotient $\pi:\mathcal{D}_{\Lambda}\longrightarrow\mathcal{F}_{\Lambda}$ (see Section \ref{sec2}). They are denoted by $H_{-m, \mu}$ for $\mu$ a class in the discriminant group $D(\Lambda)$ and $m\in \mathbb{Z}-q\left(\mu\right)>0$.

\textit{Primitive Heegner divisors}
also arise as images of hyperplane arrangements via the quotient $\pi:\mathcal{D}_{\Lambda}\longrightarrow\mathcal{F}_{\Lambda}$ (see Section \ref{sec2}) with an additional primitivity assumption on the vectors defining the hyperplanes.
They are denoted by $P_{-m, \mu}$ for~$m$ and~$\mu$ as above. In the case that the lattice $\Lambda$ splits off two copies of the hyperbolic plane $U$, primitive Heegner divisors are irreducible components of Heegner divisors. A description of which primitive Heegner divisors arise as irreducible components of a given Heegner divisor~$H_{-m, \mu}$ in this case can be found in Section \ref{sec1.1:prelim}. 

Theorem \ref{intro:thm:main_K3} above about the moduli space $\mathcal{F}_{2d}$ is a consequence of the following criterion for extremality, which is a simplified form of Theorem \ref{thm: extremality2} below.
It depends on the Fourier coefficients~$c_{t,\alpha_L}(E_{k,L})$ of the Eisenstein series~$E_{k,L}$, for some sublattice~$L$ of~$\Lambda$.
The Eisenstein series~$E_{k,L}$ is an elliptic, weight ${k\coloneqq(n+1)/{2}}$ modular form, which is vector-valued with respect to the dual Weil representation of~$L$ (see Section~\ref{sec;vv Siegel mod}), and its coefficients~$c_{t,\alpha_L}(E_{k,L})$ are rational numbers, computed in \cite{BK01}.

\begin{theorem}
\label{thm: extremality}
Let $\Lambda$ be an even lattice of signature $(2,n)$ with $n\ge 4$, and let~$P$ be an irreducible component of~$P_{-m,\mu}$. Such $P$ is given by the image of a map~$\mathcal{F}_L\to\mathcal{F}_{\Lambda}$ induced by the inclusion~$\mathcal{D}_L\subset\mathcal{D}_\Lambda$, for some sublattice~$L\subset\Lambda$ of signature $(2,n-1)$.
If
\begin{equation}
\label{sec1:eq:thm_ext}
-\sum_{\substack{\alpha\in\Lambda/L\oplus L^\perp\\t\in \mathbb{Z}-q_L(\alpha_L)\\0<t\leq m}}c_{t,\alpha_L}\left(E_{\frac{n+1}{2},L}\right)<\begin{cases}1&\hbox{if }2\mu=0,\\\frac{1}{2}&\hbox{otherwise},\end{cases}
\end{equation}
where~$\alpha_L$ denotes the image of~$\alpha$ under the projection~$D(L)\oplus D(K)\to D(L)$, then
\begin{enumerate}
    \item $P$ generates an extremal ray in $\overline{{\rm{Eff}}}\left(\mathcal{F}_\Lambda\right)$.
    \item The closure $\overline{P}$ generates an extremal ray in $\overline{{\rm{Eff}}}\left(\overline{\mathcal{F}}_\Lambda\right)$ for any normal projective $\mathbb{Q}$-factorial variety $\overline{\mathcal{F}}_\Lambda$ compactifying $\mathcal{F}_{\Lambda}$ over $\mathcal{F}_{\Lambda}^{BB}$. 
\end{enumerate}
\end{theorem}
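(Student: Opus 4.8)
The plan is to establish extremality of $P$ by exhibiting an effective divisor, or more precisely a nef/effective pairing, that is trivial on $P$ but positive on everything outside a cone spanned by $P$. The key is the relationship between Heegner divisors and automorphic forms: by Borcherds' theory (and Bruinier's refinement), there is a Borcherds product or, more flexibly, a Borcherds-type lift attached to a weakly holomorphic vector-valued modular form whose principal part encodes the divisor. Concretely, I would produce an effective divisor on $\mathcal{F}_\Lambda$ linearly equivalent to a positive multiple of $P_{-m,\mu}$ plus a combination of other Heegner divisors with \emph{prescribed nonnegative multiplicities}, where those multiplicities are exactly the Fourier coefficients $c_{t,\alpha_L}(E_{k,L})$ of the Eisenstein series appearing in \eqref{sec1:eq:thm_ext}. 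The negativity/smallness hypothesis in \eqref{sec1:eq:thm_ext} is what guarantees the correction terms are small enough that the resulting relation \emph{cannot} be balanced by writing $P$ as a nontrivial sum of other effective divisors — this is the mechanism forcing extremality.

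First I would recall the structure from Section \ref{sec2}: an irreducible component $P$ of a primitive Heegner divisor is the image of $\mathcal{F}_L \hookrightarrow \mathcal{F}_\Lambda$ for a signature $(2,n-1)$ sublattice $L$, with $\Lambda \supset L \oplus K$ for $K = L^\perp$ a negative definite rank-one lattice. The restriction map on divisors (pullback along $\mathcal{D}_L \subset \mathcal{D}_\Lambda$) sends Heegner divisors on $\mathcal{F}_\Lambda$ to Heegner divisors on $\mathcal{F}_L$, and the combinatorics of this restriction is governed precisely by the projection $D(L) \oplus D(K) \to D(L)$, $\alpha \mapsto \alpha_L$. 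The second step is to compute the self-intersection-type quantity, i.e. the restriction of the line bundle class of $P$ (or of the Heegner divisor $H_{-m,\mu}$ containing it) to $\mathcal{F}_L$ itself: this is a multiple of the Hodge (Plücker) line bundle $\lambda$ on $\mathcal{F}_L$ plus a sum of boundary-type Heegner divisors on $\mathcal{F}_L$, and the coefficient of $\lambda$ is read off from the constant term of the relevant Eisenstein series while the Heegner contributions come from the coefficients $c_{t,\alpha_L}(E_{k,L})$ for $0 < t \le m$. The third step is the extremality argument proper: suppose $P \equiv D_1 + D_2$ with $D_i$ effective and not proportional to $P$. Restrict to $\mathcal{F}_L$. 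Using that $\lambda$ is big (for $n-1 \ge 3$, hence the hypothesis $n \ge 4$) and that the primitive Heegner divisors restricting nontrivially are controlled, the inequality \eqref{sec1:eq:thm_ext} forces the $\lambda$-coefficient of $P|_{\mathcal{F}_L}$ to be smaller than the total $\lambda$-content that any genuine effective decomposition must produce — a contradiction. The dichotomy $2\mu = 0$ versus otherwise enters because in the non-two-torsion case $P_{-m,\mu}$ and $P_{-m,-\mu}$ coincide as divisors, effectively doubling multiplicities and halving the available budget.

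For part (2), the passage to a compactification $\overline{\mathcal{F}}_\Lambda \to \mathcal{F}_\Lambda^{BB}$ is comparatively soft: since $\overline{\mathcal{F}}_\Lambda$ is normal, projective, and $\mathbb{Q}$-factorial, the boundary $\partial \overline{\mathcal{F}}_\Lambda$ has codimension $\ge 2$ in the Baily--Borel model but its preimage in $\overline{\mathcal{F}}_\Lambda$ is a divisor; one checks that $\overline{\mathrm{Eff}}(\overline{\mathcal{F}}_\Lambda)$ decomposes as the cone over $\overline{\mathrm{Eff}}(\mathcal{F}_\Lambda)$ together with the classes of boundary components, and that $\overline{P}$ meets the boundary properly. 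A decomposition $\overline{P} \equiv \overline{D}_1 + \overline{D}_2$ then restricts to a decomposition on $\mathcal{F}_\Lambda$ after discarding boundary pieces (which cannot contribute to $\overline{P}$ since $\overline{P}$ is the closure of an interior divisor, and its generic point lies in the interior), reducing to part (1). I expect the main obstacle to be the second step: pinning down exactly which primitive Heegner divisors on $\mathcal{F}_L$ appear in the restriction $P|_{\mathcal{F}_L}$ and with which multiplicities, since this requires a careful analysis of how vectors $v \in \Lambda$ with $v^2 = -2m$, $v \equiv \mu$ decompose relative to the splitting $L \oplus K$, and matching the resulting count against the Eisenstein coefficient formula of \cite{BK01}. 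The automorphic input — that the combination of Heegner divisors with Eisenstein coefficients as multiplicities is \emph{effective} (this is the content of the obstruction space to Borcherds lifts being controlled by Eisenstein series, cf.\ Bruinier--Freitag) — is the other place where care is needed, but it is stated earlier and can be cited.
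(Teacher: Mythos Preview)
Your proposal aims at the right quantity (the pullback to $\mathcal{F}_L$ and the Eisenstein coefficients) but misidentifies the mechanism in two places, and the gap is exactly the ingredient that does the work in the paper.

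First, the Borcherds-product framing is a wrong turn, and your account of where the Eisenstein coefficients enter is inverted. The paper never constructs an effective divisor on $\mathcal{F}_\Lambda$ with Eisenstein-coefficient multiplicities; indeed the $c_{t,\alpha_L}(E_{(n+1)/2,L})$ are \emph{negative} for $t>0$, so they cannot serve as nonnegative multiplicities of anything. What actually happens is a two-step computation on $\mathcal{F}_L$: Kudla's pullback formula (Proposition~\ref{prop:pulbackheegner}, coming from the Kudla--Millson theta function, not a Borcherds lift) writes $\varphi^* H^\Lambda_{-m,\mu}$ as a $\ZZ$-combination of Heegner divisors $H^L_{-t,\alpha_L}$ on $\mathcal{F}_L$ with multiplicities equal to representation numbers $\theta(m-t,\cdot)$ of the rank-one negative-definite lattice $K$; then one intersects with $\lambda_L^{n-2}$, and it is only at this second stage that the Eisenstein series appears, via Kudla's formula $\deg H^L_{-t,\nu}=-\vol(\mathcal{F}_L)\cdot c_{t,\nu}(E_{(n+1)/2,L})$. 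The inequality \eqref{sec1:eq:thm_ext} is literally the statement $\deg(\varphi^*P_{-m,\mu})<0$ after bounding each $\theta(\cdot)\le 2$ and isolating the $t=0$ term.

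Second, and this is the real gap, your step three does not yield extremality as written. The phrase ``$\lambda$-coefficient of $P|_{\mathcal{F}_L}$'' has no invariant meaning; what you need is the intersection number $\varphi^*P\cdot\lambda_L^{n-2}$. Even with that interpretation, you must still explain why a pseudoeffective $D_i$ satisfies $\varphi^*D_i\cdot\lambda_L^{n-2}\ge 0$ unless $D_i$ is proportional to $P$, and this is precisely the content of the moving-curve criterion (Lemma~\ref{lem: cc lem}, after \cite{CC14}) that the paper invokes via Corollary~\ref{cor: degree condition}: the class $\varphi_*(\lambda_L^{n-2})$ is a \emph{moving} curve on $\overline{P}$, a general member misses the boundary entirely, and a moving curve with negative intersection forces extremality. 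This is also what makes part (2) go through uniformly for any $\overline{\mathcal{F}}_\Lambda\to\mathcal{F}_\Lambda^{BB}$; your proposed ``discard boundary pieces and restrict'' does not control the possibility that the $D_i$ pick up boundary components with either sign in $\Pic$, whereas a curve disjoint from the boundary is insensitive to them.
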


As a consequence of Theorem \ref{thm: extremality}, we obtain the following simpler criterion for determining whether all irreducible components of~$P_{-m,\mu}^\Lambda$ are extremal.

\begin{corollary}
\label{thm: numerical bounds intro} 
Let $\Lambda$ be an even lattice of signature $(2,n)$ with $n\ge 4$. For any $\mu\in D(\Lambda)$ let $d_\mu$ be its order in $D(\Lambda)$, and let $m\in \mathbb{Z}-q_\Lambda(\mu)>0$.
If
\begin{equation}\label{eq;symplin1o4}
    m^2d_\mu<\frac{1}{4},
\end{equation}
then 
\begin{enumerate}
    \item Every irreducible components of $H_{-m, \mu}^\Lambda$ is extremal in $\overline{{\rm{Eff}}}(\mathcal{F}_\Lambda)$.
    \item Every irreducible component of the closure $\overline{H}_{-m,\mu}^{\Lambda}$ is extremal in $\overline{{\rm{Eff}}}(\overline{\mathcal{F}}_\Lambda)$ for any normal projective $\mathbb{Q}$-factorial variety $\overline{\mathcal{F}}_{\Lambda}$ compactifying $\mathcal{F}_{\Lambda}$ over $\mathcal{F}_{\Lambda}^{BB}$.
    \end{enumerate}
\end{corollary}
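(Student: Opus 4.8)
The plan is to reduce the statement to Theorem~\ref{thm: extremality} in two moves: first passing from Heegner divisors to their primitive components, and then checking that the numerical hypothesis~\eqref{eq;symplin1o4} forces the Eisenstein inequality~\eqref{sec1:eq:thm_ext}.

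\emph{Reduction to the primitive case.} By definition (Section~\ref{sec2}), every irreducible component of $H^\Lambda_{-m,\mu}$ is the image under $\pi\colon\mathcal{D}_\Lambda\to\mathcal{F}_\Lambda$ of a rational quadratic divisor $\lambda^\perp$ attached to a vector $\lambda$ in the coset $\mu+\Lambda\subset\Lambda^\vee$ with $q_\Lambda(\lambda)=-m$. Write $\lambda=k\lambda_0$ with $\lambda_0\in\Lambda^\vee$ primitive and $k\ge 1$; then $\lambda^\perp=\lambda_0^\perp$, so this component is also an irreducible component of the primitive Heegner divisor $P^\Lambda_{-m',\mu'}$, where $m':=-q_\Lambda(\lambda_0)=m/k^2$ and $\mu':=\lambda_0+\Lambda$. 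Since $\mu=k\mu'$ in $D(\Lambda)$ we get $d_\mu=d_{\mu'}/\gcd(k,d_{\mu'})$, hence $d_{\mu'}\le k\,d_\mu$, and therefore
\[
(m')^2 d_{\mu'}=\frac{m^2}{k^4}\,d_{\mu'}\le\frac{m^2 d_\mu}{k^3}\le m^2 d_\mu<\tfrac14 .
\]
Thus it suffices to prove the corollary with $H^\Lambda_{-m,\mu}$ replaced by $P^\Lambda_{-m,\mu}$; the closure statement over $\mathcal F^{BB}_\Lambda$ is then part~(2) of Theorem~\ref{thm: extremality}. Note also that, as $d_\mu\ge 1$, the hypothesis already forces $m<\tfrac12$, so in~\eqref{sec1:eq:thm_ext} the index $t$ ranges over at most one value in $(0,m]$ for each class $\alpha$.

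\emph{Verifying the Eisenstein inequality.} Fix an irreducible component $P$ of $P^\Lambda_{-m,\mu}$ and let $L\subset\Lambda$ of signature $(2,n-1)$ and $K=L^\perp$ (negative definite of rank one) be as in Theorem~\ref{thm: extremality}, so that $P$ is the image of $\mathcal{F}_L\to\mathcal{F}_\Lambda$. We must bound $-\sum_{\alpha,t}c_{t,\alpha_L}\!\left(E_{(n+1)/2,L}\right)$, the sum being over $\alpha\in\Lambda/(L\oplus K)$ and the (at most one) admissible $t\in(0,m]$. Recording the standard relations among $m$, $d_\mu$, the discriminant form of $K$, and the index $[\Lambda:L\oplus K]$ coming from the primitive embedding $\lambda_0\in K^\vee$ — which in particular bound $|D(L)|$, $|D(K)|$ and $|\Lambda/(L\oplus K)|$ in terms of $m$ and $d_\mu$ — one estimates each coefficient $c_{t,\alpha_L}$ via the closed formula of Bruinier--Kuss~\cite{BK01}: up to an explicit constant depending only on the weight $k=(n+1)/2\ge\tfrac52$ and on the signature, it is a special value of a Dirichlet $L$-function times a finite product of local densities, all of which admit effective bounds. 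Summing over the finitely many $\alpha$ and using $m<\tfrac12$ to keep the range of $t$ minimal, one obtains an explicit upper bound for $-\sum_{\alpha,t}c_{t,\alpha_L}\!\left(E_{(n+1)/2,L}\right)$ in terms of $m$ and $d_\mu$, monotone in $m$, which under $m^2 d_\mu<\tfrac14$ is $<\tfrac12$ in general and $<1$ when $2\mu=0$; this is precisely the quantitative estimate packaged as Theorem~\ref{thm: numerical bounds}. Feeding it into Theorem~\ref{thm: extremality} shows $P$, and its closure $\overline P$, are extremal, and since by the reduction every irreducible component of $H^\Lambda_{-m,\mu}$ (resp.\ of $\overline{H}^\Lambda_{-m,\mu}$) is such a $P$ (resp.\ $\overline P$), the corollary follows.

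\emph{Main obstacle.} The delicate part is the second step: the bound on $-\sum_{\alpha,t}c_{t,\alpha_L}\!\left(E_{(n+1)/2,L}\right)$ must hold \emph{uniformly} over all sublattices $L$ (equivalently, all rank-one $K$) that can occur for the given $m$ and $\mu$, because a priori $|D(L)|$ — and hence the number of terms $\alpha$ in the sum — can be large even when $d_\mu$ is small. The point to make work is that this growth in the number of terms is compensated by the decay of the individual Fourier coefficients forced by $k\ge\tfrac52$ and by $m<\tfrac12$; making the Bruinier--Kuss formula effective enough to yield the clean threshold $1/4$, and in particular controlling the borderline classes $\alpha$ for which $t$ attains the small value $m$, is where the real work lies.
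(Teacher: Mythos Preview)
Your reduction step is correct and essentially identical to the paper's: every irreducible component of $H^\Lambda_{-m,\mu}$ is a component of some $P^\Lambda_{-m',\mu'}$ with $(m')^2 d_{\mu'}\le m^2 d_\mu/k^3<\tfrac14$, so it suffices to treat the primitive case.

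The gap is in the second step. You attempt to bound the Eisenstein sum in~\eqref{sec1:eq:thm_ext} by estimating individual coefficients $c_{t,\alpha_L}$ via Bruinier--Kuss and summing over the $|\Lambda/(L\oplus K)|=2md_\mu$ classes $\alpha$. You correctly identify that making this uniform in $L$ and extracting the clean threshold $\tfrac14$ is where ``the real work lies'' --- but you do not carry it out, and in fact this route is not how the $\tfrac14$ arises. The paper's argument (in the proof of Theorem~\ref{thm: numerical bounds}) is far simpler: the sum in~\eqref{sec1:eq:thm_ext} is \emph{empty}. Setting $\kappa=2md_\mu$, one has $[\Lambda:L\oplus K]=\kappa$, so $\kappa\alpha_L\in L$ for every $\alpha\in\Lambda/(L\oplus K)$, which forces $q_L(\alpha_L)\in\frac{1}{2\kappa}\ZZ$. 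Hence any admissible $t\in\ZZ-q_L(\alpha_L)$ with $0<t\le m$ satisfies $t=i/(2\kappa)$ for an integer $0<i\le 2\kappa m=4m^2 d_\mu$. When $m^2 d_\mu<\tfrac14$ this range contains no integers, the left-hand side of~\eqref{sec1:eq:thm_ext} is $0$, and Theorem~\ref{thm: extremality} applies immediately.

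So no Bruinier--Kuss estimate is needed for Corollary~\ref{thm: numerical bounds intro}; your ``main obstacle'' is an artifact of not exploiting the denominator constraint on $q_L(\alpha_L)$ coming from the index $[\Lambda:L\oplus K]$. The observation $m<\tfrac12$ (giving at most one $t$ per $\alpha$) is in the right direction but much too weak: the point is that there are no $t$ at all.
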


\begin{remark}
For ease of exposition, Corollary~\ref{thm: numerical bounds intro} is stated in a simplified form. We obtained a stronger criterion, where the left-hand side of~\eqref{eq;symplin1o4} is replaced by a real-valued function depending on~$n$, $m$, $d_{\mu}$ and the discriminant of~$\Lambda$, see Theorem~\ref{thm: numerical bounds}.
\end{remark}

\subsection{Application to asymptotics of extremality}

As an application of Corollary~\ref{thm: numerical bounds intro} and Theorem~\ref{thm: numerical bounds},  we study in Section \ref{sec3:asymptotic} the asymptotic behavior of extremality, showing essentially that any Heegner divisor eventually becomes extremal as either the rank of~$\Lambda$ or the discriminant $D_\Lambda$ grows. For instance, for the K3 lattice~$\Lambda_{2d}= U^{\oplus 2}\oplus E_8(-1)^{\oplus 2}\oplus \ZZ\ell$, with~$q(\ell)=-d$ and~$\ell_*=\ell/2d$ the standard generator of~$D(\Lambda_{2d})$, we establish:
\begin{corollary}
    Let $m\in \mathbb{Q}_{> 0}$ and $d_\mu\in \mathbb{Z}_{> 0}$ fixed. If $d$ is large enough, then for all~$0\leq a<2d$ with $2d/\gcd(a,2d)=d_{\mu}$ the divisor $P_{-m,a\ell_*}$ is either empty or extremal in~$\overline{{\rm{Eff}}}(\mathcal{F}_{2d})$ and its closure extremal in~$\overline{{\rm{Eff}}}(\overline{\mathcal{F}}_{2d})$.
\end{corollary}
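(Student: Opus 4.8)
The plan is to verify, for every irreducible component of $P_{-m,a\ell_*}$, the hypothesis of Theorem~\ref{thm: extremality}, and to let the growth of the discriminant $|D(\Lambda_{2d})|=2d$ force the required inequality. Fix $0\le a<2d$ with $2d/\gcd(a,2d)=d_\mu$; we may assume $P_{-m,a\ell_*}\neq\emptyset$, since otherwise there is nothing to prove. (For a fixed rational $m$ this nonemptiness already forces $m\in\ZZ-q_{\Lambda_{2d}}(a\ell_*)=\ZZ+\tfrac{a^2}{4d}$, which fails for most $a$ once $d$ is large; such $a$ fall under the ``empty'' alternative.) Let $P$ be an irreducible component of $P_{-m,a\ell_*}$; as in Theorem~\ref{thm: extremality} it is the image of a map $\mathcal{F}_L\to\mathcal{F}_{\Lambda_{2d}}$ with $L=v^\perp$ for a primitive vector $v\in\Lambda_{2d}$ with $q(v)=-m$ (and associated discriminant class $a\ell_*$), so $L$ has signature $(2,18)$ and $L^{\perp}=\ZZ v$. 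Since $n=19$, hence $k=(n+1)/2=10$, and since we may always use the weaker right-hand side $\tfrac{1}{2}$ in~\eqref{sec1:eq:thm_ext}, Theorem~\ref{thm: extremality} gives that $P$ is extremal in $\overline{\mathrm{Eff}}(\mathcal{F}_{2d})$ and $\overline P$ in $\overline{\mathrm{Eff}}(\overline{\mathcal{F}}_{2d})$ as soon as
\[
S_L\;:=\;-\!\!\sum_{\substack{\alpha\in\Lambda_{2d}/(L\oplus\ZZ v)\\ t\in\ZZ-q_L(\alpha_L),\ 0<t\le m}}\!\! c_{t,\alpha_L}\bigl(E_{10,L}\bigr)\;<\;\tfrac{1}{2}.
\]

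I would then estimate $S_L$ and show $S_L\to 0$ as $d\to\infty$, uniformly in $a$ and in the choice of $P$, using three inputs. First, the number of summands is bounded independently of $d$: the glue group $\Lambda_{2d}/(L\oplus\ZZ v)$ embeds into $D(\ZZ v)\cong\ZZ/2m\ZZ$, so at most $2m$ classes $\alpha$ occur, each with at most $m$ admissible $t$. Second, $|D(L)|$ is large: from $|D(L)|\cdot|D(\ZZ v)|=[\Lambda_{2d}:L\oplus\ZZ v]^2\,|D(\Lambda_{2d})|$ with $|D(\ZZ v)|=2m$ and $|D(\Lambda_{2d})|=2d$ one gets $|D(L)|\ge d/m\to\infty$. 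Third, the Bruinier--Kuss formula~\cite{BK01} for the Fourier coefficients of the lattice Eisenstein series $E_{10,L}$ of weight $k=10$ and signature $(2,18)$ — the case $l=2k-2$ — expresses $c_{t,\alpha_L}(E_{10,L})$ as $|D(L)|^{-1/2}$ times an arithmetic factor bounded in terms of $k=10$ and $t\le m$ (a $\Gamma$-factor, special $L$-values evaluated in the region of absolute convergence, and a divisor sum in $t$) times a product of local representation densities, which in aggregate is $O_{m,\varepsilon}(|D(L)|^{\varepsilon})$. Hence $|c_{t,\alpha_L}(E_{10,L})|\ll_{m,\varepsilon}|D(L)|^{-1/2+\varepsilon}\ll_{m,\varepsilon}d^{-1/2+\varepsilon}$ with implied constant independent of $a$ and $P$, so $S_L=O_{m,\varepsilon}(d^{-1/2+\varepsilon})$ and thus $S_L<\tfrac{1}{2}$ for all $d\ge d_0(m,d_\mu)$; Theorem~\ref{thm: extremality} then gives both assertions. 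Equivalently, since $\Lambda_{2d}$ splits off $U^{\oplus 2}$, each component of $P_{-m,a\ell_*}$ is a component of $H_{-m,a\ell_*}$ (see Section~\ref{sec1.1:prelim}), and one may instead apply Theorem~\ref{thm: numerical bounds} to $H_{-m,a\ell_*}$, its governing function of $(n,m,d_\mu,D_\Lambda)$ being of the order of $D_\Lambda^{-1/2+\varepsilon}$ by the same bound and hence eventually below $\tfrac{1}{2}$ along $D_\Lambda=2d\to\infty$.

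\emph{The main obstacle} is the uniformity of the estimate $|c_{t,\alpha_L}(E_{10,L})|\ll_{m,\varepsilon}|D(L)|^{-1/2+\varepsilon}$ across the whole family of sublattices $L=v^\perp$ produced as $a$ — and with it the local structure of $L$ at the primes dividing $2d$ — varies: one must check that the local representation densities at the bad primes $p\mid|D(L)|$, no longer of the clean shape $1-\chi(p)p^{1-k}$, contribute at most $|D(L)|^{\varepsilon}$ in aggregate and so do not erode the global gain $|D(L)|^{-1/2}$. This is a careful but essentially elementary analysis of the explicit formula of~\cite{BK01}, and it is exactly what is encoded in the refined bound of Theorem~\ref{thm: numerical bounds}; granting that input, the asymptotic statement is immediate, with $d_0$ controlled by $m$ alone (and $d_\mu$ entering only through the admissibility constraint it imposes on $a$ and the bound $[\Lambda_{2d}:L\oplus\ZZ v]\mid 2m$).
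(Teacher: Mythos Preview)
Your argument is essentially the paper's: use the Bruinier--Kuss bounds on the Fourier coefficients of $E_{10,L}$ to force the left-hand side of the extremality criterion (Theorem~\ref{thm: extremality}, or equivalently the function $f(n,D_\Lambda,m,d_\mu)$ of Theorem~\ref{thm: numerical bounds}) to be $O_{m,d_\mu,\varepsilon}(d^{-1/2+\varepsilon})$ as $d\to\infty$ with $m,d_\mu$ fixed. A few constants are off --- a primitive representative $\rho$ of $(-m,a\ell_*)$ has $\mathrm{div}_\Lambda(\rho)=d_\mu$ and hence $q(\rho)=-m d_\mu^2$ (not $-m$), so $|D(\ZZ\rho)|=2md_\mu^2$, $|\Lambda_{2d}/(L\oplus\ZZ\rho)|=2md_\mu$, and $|D(L)|=4md$ --- but since $d_\mu$ is fixed these changes are harmless for the asymptotic and your conclusion stands.
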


\subsection{Application to cones of Heegner divisors}
Given that Heegner divisors form a special class of effective divisors on $\mathcal{F}_\Lambda$, it is natural to consider: 
\begin{itemize}
\item[(A)] The $\mathbb{Q}$-subspace $\Pic_\QQ^{NL}(\mathcal{F}_\Lambda)\subset \Pic_\QQ(\mathcal{F}_\Lambda)$ generated by Heegner divisors.
\item[(B)]The subcone ${\rm{Eff}}^{NL}(\mathcal{F}_\Lambda)\subset\overline{{\rm Eff}}(\mathcal{F}_\Lambda)$ generated
by irreducible components of Heegner divisors.
\end{itemize}
If~$n\ge 3$ and $\Lambda$ splits off two copies of the hyperbolic plane, then one has equality~$\Pic_\QQ^{NL}(\mathcal{F}_\Lambda)=\Pic_\QQ(\mathcal{F}_\Lambda)$  in the containment (A) \cite{BLMM17} and the NL-cone~${\rm{Eff}}^{NL}(\mathcal{F}_\Lambda)$ in (B) is polyhedral \cite{BM19}.
The extremal rays of the NL-cone ${\rm Eff}^{NL}(\mathcal{F}_\Lambda)$ can be computed explicitly for any such $\Lambda$, see \cite{BBFW24} and the examples therein. See also~\cite{zuffetti-cod2, zuffetti-equidistribution} for generalizatons to higher codimension Heegner cycles. 

It is an open question whether or not $\overline{{\rm Eff}}(\mathcal{F}_\Lambda)$ is polyhedral and if the containment
\begin{equation}\label{eq: cone containment}
{\rm Eff}^{NL}(\mathcal{F}_\Lambda)\subset\overline{{\rm Eff}}(\mathcal{F}_\Lambda)
\end{equation}
is in fact an equality (e.g. see \cite{Pet15, BM19}). For instance, there is no value of $d$ for which is known whether one has equality in \eqref{eq: cone containment} for the moduli space of quasi-polarized K3 surfaces $\mathcal{F}_{2d}$.

As an application of Theorem \ref{thm: extremality} we are able to exhibit a geometric example where the equality ${\rm Eff}_{\mathbb{R}}^{NL}(\mathcal{F}_\Lambda)=\overline{{\rm Eff}}(\mathcal{F}_\Lambda)$ in the containment \eqref{eq: cone containment} does hold. To our knowledge, this is the first (non-trivial) example where either equality or non-equality is determined. 

Let $\Lambda=U^{\oplus 2}\oplus A_{1}(-1)\oplus A_1(-3)$. Then 
 $\mathcal{F}_\Lambda$ is a partial compactification of the moduli space $\mathcal{M}_{{\rm{Kum}}_{2},2}^1$ parameterizing (quasi)-polarized hyperk\"{a}hler fourfolds of Kummer type with split polarization of degree two \cite[Lemma 4.7]{BBFW24}.
 Using \cite{Bru02b, weilrep} one has that $\dim {\rm Pic}_\mathbb{R}\left(\mathcal{F}_\Lambda\right)=2$.
 The criterion of Theorem~\ref{thm: extremality} establishes (in fact the particular criterion~$m^2d_\mu<\frac{1}{4}$ of Corollary~\ref{thm: numerical bounds intro} suffices) two primitive Heegner divisors that are extremal in~$\overline{{\rm Eff}}(\mathcal{F}_\Lambda)$. We thus conclude the following. 
 \begin{corollary}\label{prop: cone equality}
The orthogonal modular variety $\mathcal{F}_\Lambda$ with $\Lambda=U^{\oplus 2}\oplus A_{1}(-1)\oplus A_1(-3)$ partially compactifying the moduli space $\mathcal{M}_{{\rm{Kum}}_{2},2}^1$ parameterizing (quasi)-polarized hyperk\"{a}hler fourfolds of Kummer type with split polarization of degree two satisfies
 \[{\rm Eff}^{NL}(\mathcal{F}_\Lambda)=\overline{{\rm Eff}}(\mathcal{F}_\Lambda).\]
 That is, every effective divisor on this $\mathcal{F}_\Lambda$ is of Noether--Lefschetz type.
 \end{corollary}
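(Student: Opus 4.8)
The plan is to reduce the equality of cones to a finite lattice computation, using crucially that $\Pic_{\mathbb{R}}(\mathcal{F}_\Lambda)$ is two-dimensional. First I would record the two standing inputs: by \cite{Bru02b, weilrep} one has $\dim\Pic_{\mathbb{R}}(\mathcal{F}_\Lambda)=2$, and since $\Lambda=U^{\oplus 2}\oplus A_1(-1)\oplus A_1(-3)$ has signature $(2,4)$ we have $n=4$, so that Corollary~\ref{thm: numerical bounds intro} and Theorem~\ref{thm: extremality} are both available. The guiding observation is the following elementary fact. \emph{Suppose $P_1$ and $P_2$ are primitive Heegner divisors on $\mathcal{F}_\Lambda$ that are linearly independent in $\Pic_{\mathbb{R}}(\mathcal{F}_\Lambda)$ and each generate an extremal ray of $\overline{{\rm Eff}}(\mathcal{F}_\Lambda)$.} Then
\[
\mathbb{R}_{\ge 0}[P_1]+\mathbb{R}_{\ge 0}[P_2]\ \subseteq\ {\rm Eff}^{NL}(\mathcal{F}_\Lambda)\ \subseteq\ \overline{{\rm Eff}}(\mathcal{F}_\Lambda)\ =\ \mathbb{R}_{\ge 0}[P_1]+\mathbb{R}_{\ge 0}[P_2],
\]
forcing equality throughout. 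Indeed, the first inclusion holds since $P_1,P_2$ are irreducible components of Heegner divisors; the second is the containment~\eqref{eq: cone containment}; and the last equality is pure two-dimensional convex geometry: a two-dimensional closed convex cone admitting an extremal ray is automatically pointed, hence is the wedge on its two extremal rays, which here must be $\mathbb{R}_{\ge 0}[P_1]$ and $\mathbb{R}_{\ge 0}[P_2]$.

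It remains to produce $P_1$ and $P_2$. Computing the discriminant form of $\Lambda$, one finds $D(\Lambda)\cong\mathbb{Z}/2\mathbb{Z}\oplus\mathbb{Z}/6\mathbb{Z}$, generated by the classes $\alpha$ of $\tfrac12 e$ and $\beta$ of $\tfrac16 f$, where $e$ and $f$ generate the summands $A_1(-1)$ and $A_1(-3)$; these have orders $2$ and $6$, and $q_\Lambda(\alpha)=-\tfrac14$, $q_\Lambda(\beta)=-\tfrac1{12}$ in $\mathbb{Q}/\mathbb{Z}$. The smallest positive representatives of the corresponding cosets are then $m=\tfrac14$ and $m=\tfrac1{12}$; the vectors $\tfrac12 e$ and $\tfrac16 f$ exhibit $H_{-1/4,\alpha}$ and $H_{-1/12,\beta}$ as non-empty; and
\[
(\tfrac14)^2\cdot 2=\tfrac18<\tfrac14 \qquad\text{and}\qquad (\tfrac1{12})^2\cdot 6=\tfrac1{24}<\tfrac14 .
\]
Thus Corollary~\ref{thm: numerical bounds intro} applies with $(m,\mu)=(\tfrac14,\alpha)$ and with $(m,\mu)=(\tfrac1{12},\beta)$: every irreducible component of $H_{-1/4,\alpha}$ and of $H_{-1/12,\beta}$—and hence of the primitive Heegner divisors $P_{-1/4,\alpha}$ and $P_{-1/12,\beta}$—generates an extremal ray of $\overline{{\rm Eff}}(\mathcal{F}_\Lambda)$. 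If $[P_{-1/4,\alpha}]$ and $[P_{-1/12,\beta}]$ are linearly independent in $\Pic_{\mathbb{R}}(\mathcal{F}_\Lambda)$, then among the irreducible components of these two divisors one finds representatives of at least two distinct extremal rays of $\overline{{\rm Eff}}(\mathcal{F}_\Lambda)$; picking one component on each of two such rays furnishes the $P_1,P_2$ above, and the first paragraph finishes the proof.

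The step I expect to be the real obstacle is precisely the linear independence of $[P_{-1/4,\alpha}]$ and $[P_{-1/12,\beta}]$ in the two-dimensional group $\Pic_{\mathbb{R}}(\mathcal{F}_\Lambda)$: a priori the linear relations among Heegner divisors could place these two classes on a common ray, in which case the coarse criterion $m^2 d_\mu<\tfrac14$ would detect only one boundary ray of $\overline{{\rm Eff}}(\mathcal{F}_\Lambda)$ and one would have to use the sharper Theorem~\ref{thm: numerical bounds}, or Theorem~\ref{thm: extremality} directly, to reach the second generator. In the case at hand this does not happen: by the explicit determination of ${\rm Eff}^{NL}(\mathcal{F}_\Lambda)$ (cf.~\cite{BBFW24}, \cite[Lemma~4.7]{BBFW24}), the divisors $P_{-1/4,\alpha}$ and $P_{-1/12,\beta}$ are exactly the generators of the two extremal rays of the polyhedral cone ${\rm Eff}^{NL}(\mathcal{F}_\Lambda)$; in particular they are linearly independent, and no Heegner divisor protrudes beyond their span. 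Everything else in the argument is formal.
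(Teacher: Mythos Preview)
Your approach is essentially the same as the paper's: compute $D(\Lambda)\cong\ZZ/2\oplus\ZZ/6$, check that the two primitive Heegner divisors $P_{-1/4,\alpha}$ and $P_{-1/12,\beta}$ satisfy $m^2 d_\mu<\tfrac14$, invoke the extremality criterion, and use $\dim\Pic_{\RR}(\mathcal{F}_\Lambda)=2$ to conclude. Your arithmetic checks are correct and match the paper's.

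There is, however, a genuine gap in the way you close the argument. You appeal to \cite{BBFW24} for an ``explicit determination of ${\rm Eff}^{NL}(\mathcal{F}_\Lambda)$'' in order to obtain the linear independence of $[P_{-1/4,\alpha}]$ and $[P_{-1/12,\beta}]$. But the paper's own Remark immediately following the proof states that \cite{BBFW24} only \emph{conjectures} this description of the NL-cone and could not establish it; indeed, the point of Corollary~\ref{prop: cone equality} is precisely to confirm that conjecture. So you cannot cite \cite{BBFW24} for this step, and as written your argument is circular.

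The good news is that the linear independence you need is already a byproduct of the extremality method, and requires no external input. Since $\Lambda$ splits off two copies of $U$, both $P_1\coloneqq P_{-1/4,\alpha}$ and $P_2\coloneqq P_{-1/12,\beta}$ are irreducible, and they are distinct irreducible divisors (different $(m,\mu)$). The criterion of Corollary~\ref{cor: degree condition} produces a moving curve class $[C_1]$ on $\overline{P_1}$ with $C_1\cdot \overline{P_1}<0$. Because $C_1$ sweeps out a dense subset of $P_1$ while $P_2\neq P_1$, a general member of the family is not contained in $P_2$, hence $C_1\cdot \overline{P_2}\geq 0$. If $[P_2]=c[P_1]$ with $c>0$, this would force $C_1\cdot\overline{P_2}<0$, a contradiction; and $c\leq 0$ is impossible since both classes are nonzero effective. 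Thus $[P_1]$ and $[P_2]$ span distinct rays, and your two-dimensional convex-geometry argument then finishes the proof. The paper's proof leaves this step implicit; you were right to flag it, but the fix is internal, not an appeal to \cite{BBFW24}.
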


\subsection{Strategy to deduce extremality}\label{sec;strategy intro}
To prove Theorem \ref{thm: extremality}, we establish a criterion for extremality of primitive Heegner divisors based on the extremality criterion given in \cite[Lemma 4.1]{CC14} for rays in the pseudo-effective cone of a projective variety; see Lemma~\ref{lem: cc lem}.
More precisely, for $\mathcal{F}_\Lambda^{BB}$ the Baily--Borel compactification of~$\mathcal{F}_\Lambda$ and~$\overline{\mathcal{F}}_\Lambda\rightarrow \mathcal{F}_\Lambda^{BB}$ a $\mathbb{Q}$-factorial compactification of $\mathcal{F}_\Lambda$ over $\mathcal{F}_\Lambda^{BB}$, we show that the extremality of an irreducible effective divisor $D$ in~$\overline{{\rm Eff}}(\mathcal{F}_\Lambda)$ (and of~$\overline{D}$ in~$\overline{{\rm Eff}}(\overline{\mathcal{F}}_\Lambda)$) follows from the existence of a moving curve~$[C]\in N_1\left(\overline{\mathcal{F}}_\Lambda\right)$ on the closure $\overline{D}$ of $D$ in $\overline{\mathcal{F}}_\Lambda$ such that 
  \begin{equation}\label{eq;crit of extrem}
      C\cap \partial \overline{\mathcal{F}}_\Lambda=\emptyset  \quad\text{ and }\quad \overline{D}\cdot \left[C\right]<0.
  \end{equation}

Suppose for simplicity that~$\Lambda$ splits off \textbf{two hyperbolic planes}, so that every~$P_{-m, \mu}$ is irreducible. This assumption will be dropped in the main body of the paper.
In order to pinpoint a moving curve $C$ as above in the case of~$D=P_{-m, \mu}$, we consider an orthogonal modular variety $\mathcal{F}_L$ and a finite map
  \[
  \varphi\colon \mathcal{F}_L\rightarrow \mathcal{F}_\Lambda
  \]
induced by some sublattice $L\subset \Lambda$ of signature~$(2,n-1)$ chosen depending on the parameters $m$ and~$\mu$.
The image of~$\varphi$ is the primitive Heegner divisor~$P_{-m, \mu}$.
The moving curve~$C$ is then chosen as the pushfoward under~$\varphi$ of the curve class~$\lambda_L^{n-2}$ on~$\mathcal{F}_L$.
In this case, the criterion~\eqref{eq;crit of extrem} boils down to proving that
\begin{equation}\label{eq;deg vol negative}
\deg (\varphi^* P_{-m,\mu})<0
\qquad
\text{or equivalently}
\qquad
\vol(P_{-m,\mu}^2)<0.
\end{equation}
In the above, the degree is with respect to the hyperplane class on the Baily--Borel compactification $\mathcal{F}_L^{BB}\subset \mathbb{P}^N$, and
\[
\vol(P_{-m,\mu}^2)=
\int_{\mathcal{F}_\Lambda}
[P_{-m,\mu}]\wedge[P_{-m,\mu}]\wedge\omega^{n-2}
\]
is the volume of the self-intersection of~$P_{-m,\mu}$, where~$\omega$ is the Kähler form on~$\mathcal{F}_\Lambda$ representing the Hodge line bundle~$\lambda_\Lambda$.

We provide two different ways of computing the quantities in~\eqref{eq;deg vol negative}. The first one is based on \emph{pulling back Heegner divisors}, see Theorem \ref{thm: extremality2} and Corollary \ref{sec3:coro:extremality3}. The second one consists of \emph{pulling back Siegel Eisenstein series}, see Lemma~\ref{lemma:fromdegtovol} and Proposition~\ref{prop:volprHeegner Siegel}. 
We implemented both methods in SageMath \cite{sagemath} using the WeilRep package \cite{weilrep}. The program, called \texttt{extremal\_rays}, is available on the authors' websites together with a README file with more details on the computation~\cite{barrosflapanzuffetti-prog}.

We now illustrate the idea behind both methods.

\subsection*{Pulling back Heegner divisors}
By~\cite{kudla-algcycles} the pullback of a Heegner divisor on~$\mathcal{F}_\Lambda$ under~$\varphi$ is a combination of Heegner divisors on~$\mathcal{F}_L$.
We rephrase the cited result, originally stated over the adeles, to our lattice-coset setup in Proposition~\ref{prop:pulbackheegner}.
This is implied by the arithmetic properties of the \emph{Kudla--Millson theta functions}, which are modular forms with values in spaces of differential forms on the given modular variety, first introduced in~\cite{kudlamillson-harmonicI, kudlamillson-harmonicII}; see Section~\ref{sec;genseries} for details.

This pullback formula implies that~$\deg (\varphi^* P_{-m,\mu})$ is a linear combination of degrees of Heegner divisors on~$\mathcal{F}_L$, with linear coefficients given by the Fourier coefficients of certain theta functions attached to negative definite lattices.
The degrees of Heegner divisors on~$\mathcal{F}_L$ are, up to the volume~$\vol(\mathcal{F}_L)$, the Fourier coefficients of a normalized elliptic Eisenstein series.
Thanks to the explicit formulas for the latter, proved in~\cite{BK01}, we obtain a formula for~$\deg (\varphi^* P_{-m,\mu})$ from which we deduce Theorem~\ref{thm: extremality}.

\subsection*{Pulling back Siegel Eisenstein series}
The cohomology class~$[\KMtheta{\genus}]$ of the genus~$\genus$ Kudla--Millson theta function~$\KMtheta{\genus}$ on~$\mathcal{F}_\Lambda$ is a genus~$\genus$ Siegel modular form with values in~$H^{2\genus}(\mathcal{F}_\Lambda,\CC)$.
Its Fourier expansion provides the generating series of the so-called \emph{special cycles}.
If~$g=1$, these are the Heegner divisors considered above.

The cup product of two Heegner divisors on~$\mathcal{F}_\Lambda$ is a Fourier coefficient of the class~${[\KMtheta{1}\wedge\KMtheta{1}]}$.
Hence, the volume~$\vol(H_{m,\mu}^2)$ is a Fourier coefficient of
\begin{equation}\label{eq;doublthetaKM}
\int_{\mathcal{F}_\Lambda}\KMtheta{1}\wedge\KMtheta{1}\wedge\omega^{n-2},
\end{equation}
the latter considered as a modular form on~$\HH\times\HH$.

By~\cite{kudla-algcycles}, we know that~$\KMtheta{1}(\tau_1)\wedge\KMtheta{1}(\tau_2)=\KMtheta{2}\big(\begin{smallmatrix}
    \tau_1 & 0\\
    0 & \tau_2
\end{smallmatrix}\big)$, where~$\HH\times\HH$ is included in the genus~$2$ Siegel upper half-space~$\HH_2$ as
\begin{equation}\label{eq;stdembHH}
\HH\times\HH\longhookrightarrow\HH_2,\qquad (\tau_1,\tau_2)\longmapsto\big(\begin{smallmatrix}
    \tau_1 & 0\\
    0 & \tau_2
\end{smallmatrix}\big).
\end{equation}
Similar to the genus~$1$ case, we have~$\int_{\mathcal{F}_\Lambda} \KMtheta{2}\wedge\omega^{n-2}=\vol(\mathcal{F}_\Lambda)E^k_{2,\Lambda}$, where~$E^k_{2,\Lambda}$ is a vector-valued normalized Siegel Eisenstein series of genus~$2$; see Section~\ref{sec;vv Siegel mod}.
Thus, the modular form~\eqref{eq;doublthetaKM} is, up to the volume of~$\mathcal{F}_\Lambda$,  the pullback of~$E^k_{2,\Lambda}$ with respect to the embedding~\eqref{eq;stdembHH}.
A similar pullback is described by Garret~\cite{garrett} and Böcherer~\cite{boecherer} in their works on the doubling method; see~\cite[Section~2.9]{bruinierzuffetti} for a vector-valued generalization.
We describe the Fourier coefficients of the pullback~$E^k_{2,L}\big(\begin{smallmatrix}
    \tau_1 & 0\\
    0 & \tau_2
\end{smallmatrix}\big)$ in terms of elliptic Eisenstein and Poincaré series, and deduce a formula for~$\vol(H^2_{-m,\mu})$, which we use to compute~$\vol(P^2_{-m,\mu})$, see Proposition~\ref{prop:volprHeegner Siegel}.

 \subsection*{Acknowledgements} This paper benefited from helpful discussions and correspondence with the
following people who we gratefully acknowledge: Pietro Beri, Jan Bruinier, Phil Engel, Paul Kiefer, Radu Laza, Martin M\"oller, Scott Mullane, and Brandon Williams.

\section{Preliminaries} \label{sec2}

Let $\Lambda$ be an even lattice of signature~$(2,n)$ with bilinear form~$\langle\cdot{,}\cdot\rangle$ and associated quadratic form~$q(z)=\langle z,z\rangle/2$. Both ~$\langle\cdot{,}\cdot\rangle$ and~$q$ extend to $V=\Lambda\otimes \mathbb{R}$ and to $V_\mathbb{C}=\Lambda\otimes \mathbb{C}$.
The analytic open subset
\[
\left\{\left[z\right]\in\mathbb{P}(V_{\mathbb{C}})\,|\,q(z)=0\hbox{ and }\langle z,\overline{z}\rangle>0\right\}
\]
of the quadric in~$\mathbb{P}(V_{\mathbb{C}})$ defined by~$q$ has two connected components~$\mathcal{D}_{\Lambda}$ and~$\overline{\mathcal{D}}_{\Lambda}$, which are exchanged by complex conjugation.
Both components are projective models of the Hermitian symmetric domain of type IV arising from~$G={\rm{O}}(V)$.

Let~$\widetilde{\rm{O}}^+(\Lambda)$ be the subgroup of the discriminant kernel of~$G$ preserving~$\mathcal{D}_\Lambda$.
The \textit{orthogonal modular variety} arising from~$\Lambda$ is defined as
\[
\mathcal{F}_{\Lambda}=\mathcal{D}_\Lambda/\widetilde{\rm{O}}^+(\Lambda).
\]
    It is a quasi-projective variety~\cite{BB66} that for various choices of the lattice $\Lambda$ gives a partial compactification of a coarse moduli space of polarized varieties. For instance, when $\Lambda$ is given by the lattice
\[
\Lambda_{2d}=U^{\oplus 2}\oplus E_8(-1)^{\oplus 2}\oplus \mathbb{Z}\ell,\;\;\hbox{with}\;\;q(\ell)=-d,
\]
then the quotient $\mathcal{F}_{2d}=\mathcal{D}_{\Lambda_{2d}}/\widetilde{{\rm{O}}}^+(\Lambda_{2d})$ is the coarse moduli space of quasi-polarized K3 surfaces $(S,H)$.
Here quasi-polarized means that~$H$ is taken to be big and nef, but not necessarily ample. 

\subsection{Heegner divisors}\label{sec1.1:prelim} For fixed $v\in \Lambda^{\vee}\subset\Lambda_{\mathbb{Q}}$, set 
\[
D_v=v^\perp\cap \mathcal{D}_{\Lambda}=\left\{[z]\in \mathcal{D}_{\Lambda}\left|\langle z,v\rangle=0\right.\right\}.
\]
Note that if~$q(v)\geq 0$ and~$v\neq 0$, then~$D_v=\emptyset$.
Let~$\mu\in D(\Lambda)$ and let~$m\in \mathbb{Z}-q(\mu)$ be positive. Then the cycles 
\begin{equation}
\label{sec2:eq:Heegner}
\sum_{\substack{v\in \mu+\Lambda\\q(v)=-m}}D_v\qquad\hbox{and}\qquad\sum_{\substack{v\in \mu+\Lambda\\v\; \hbox{\tiny{primitive}}\\q(v)=-m}}D_v
\end{equation}
on $\mathcal{D}_\Lambda$ descend to $\mathbb{Q}$-Cartier divisors on $\mathcal{F}_\Lambda$ denoted $H^\Lambda_{-m,\mu}$ and $P^\Lambda_{-m,\mu}$ respectively. They are called a {\textit{Heegner divisor}} and {\textit{primitive Heegner divisor}} respectively. Whenever the lattice~$\Lambda$ with respect to which we construct Heegner divisors is clear from the context, we may drop the index~$\Lambda$ and simply write~$H_{-m,\mu}$.

In general, both $H_{-m,\mu}$ and $P_{-m,\mu}$ are neither reduced, nor irreducible. They relate as
\begin{equation}\label{eq: Heegner decomp}
H_{-m,\mu}=\sum_{\substack{r\in\ZZ_{>0} \\ r^2|m}}\sum_{\substack{\delta\in D(\Lambda)\\r\delta=\mu}}P_{-m/r^2,\delta}
\qquad\hbox{and}\qquad
P_{-m,\delta}=\sum_{\substack{r\in\ZZ_{>0} \\ r^2| m}}\mu(r)\sum_{\substack{\beta\in D(\Lambda)\\r\beta=\delta}}H_{-m/r^2,\beta},
\end{equation}
where $\mu(\cdot)$ is the M\"{o}bius function, see \cite[Lemma 4.2]{BM19}.
Here we say that~$r^2|m$ if there exists~$\beta\in D(\Lambda)$ with~$m/r^2\in \ZZ-q(\beta)$.
All the components of~$H_{-m,\mu}$ (and therefore of~$P_{-m,\mu}$) have multiplicity two if $\mu=-\mu$ in $D(\Lambda)$ and all have multiplicity one otherwise. Further, the line bundle $\mathcal{O}(-1)$ on $\mathcal{D}_{\Lambda}\subset \mathbb{P}\left(\Lambda_{\mathbb{C}}\right)$ admits a natural~$\widetilde{{\rm{O}}}^+(\Lambda)$-action and descends to a $\mathbb{Q}$-line bundle $\lambda$ called the {\textit{Hodge bundle}}. One defines $H_{0,0}=-\lambda$. 

When $\Lambda$ splits off two copies of $U$, then the pair $(m,\mu)$ determines the $\widetilde{{\rm{O}}}^+(\Lambda)$-orbit of primitive elements. This implies that $P_{-m,\mu}$ is irreducible in that case; see~\cite[Lemma~4.3]{BM19}.

\begin{example}\label{ex: NL Heegner relationship} In the K3 case $\mathcal{F}_{2d}$, these Heegner divisors correspond to the Noether--Lefschetz divisors $D_{h,a}$ defined in the introduction: One has (up to possibly a factor of~$1/2$) that $H_{-m,\mu}=D_{h,a}$ in ${\rm{Pic}}_{\mathbb{Q}}(\mathcal{F}_{2d})$ where
\[
m=\frac{a^2}{4d}-(h-1)\;\;\;\hbox{and}\;\;\;\mu=a\cdot\frac{\ell}{2d}\in D(\Lambda_{2d}),
\]
see \cite[Section~1 and Lemma~3]{MP13}.
\end{example}

Let~${\rm{Pic}}_{\mathbb{Q}}^{H}(\mathcal{F}_\Lambda)$ denote the subspace of the rational Picard group~$\Pic_\QQ(\mathcal{F}_\Lambda)$ generated by all Heegner divisors. Then, under the assumption that $\Lambda$ splits off two copies of the hyperbolic plane we have the equality \cite[Theorem 1.5]{BLMM17}, \cite[Remark 3.13, Corollary 3.18]{bruinierzuffetti}
\begin{equation}\label{eq: BLMM}
{\rm{Pic}}_{\mathbb{Q}}^H(\mathcal{F}_\Lambda)={\rm{Pic}}_{\mathbb{Q}}(\mathcal{F}_\Lambda).
\end{equation}

\subsection{Vector-valued Siegel modular forms}\label{sec;vv Siegel mod}

Let $\HH_\genus$ denote the Siegel upper half-space of genus~$\genus$, meaning the set of complex~$\genus\times\genus$-matrices with positive-definite imaginary part.
The symplectic group~$\Sp_{2\genus}(\RR)$ acts transitively on~$\HH_\genus$ as follows.
For any~$\gamma=\big(\begin{smallmatrix}
    a & b\\
    c & d
\end{smallmatrix}\big)\in\Sp_{2\genus}(\RR)$ written in blocks of~$\genus\times\genus$-matrices, the action of~$\gamma$ on~$\HH_\genus$ is defined as
\[
\gamma\cdot\tau = (a\tau+b)(c\tau + d)^{-1},\qquad \tau\in\HH_\genus.
\]

The {\textit{metaplectic group}} ${\rm{Mp}}_{2\genus}(\mathbb{R})$ is the double cover of $\mathrm{Sp}_{2\genus}(\mathbb{R})$ defined as the group of pairs $(A,\phi(\tau))$ where $A=\big(\begin{smallmatrix} a&b\\c&d\end{smallmatrix}\big)\in {\rm{Sp}}_{2\genus}(\mathbb{\RR})$, and $\phi(\tau)$ is a choice of a square root of the function $\det(c\tau+d)$ on the upper half space $\mathbb{H}_\genus$.
The product in ${\rm{Mp}}_{2\genus}(\mathbb{R})$ is given by
$(A_1,\phi_1(\tau))\cdot(A_2,\phi_2(\tau))=(A_1A_2, \phi_1(A_2\tau)\phi_2(\tau))$, and the action of~$\Mp_{2\genus}(\RR)$ on~$\HH_\genus$ is the one induced by~$\Sp_{2\genus}(\RR)$.

Let~$L$ be an even lattice of signature~$(m,n)$, and let~$\mathbb{C}\left[D(L)^\genus\right]$ be the group algebra generated by~$\genus$ copies of the discriminant group of $L$.
Its standard set of generators is~$\{\mathfrak{e}_{\discel}\mid \discel\in D(L)^\genus\}$.
Let 
  \[\rho_{L,\genus}:{\rm{Mp}}_{2\genus}(\mathbb{Z})\longrightarrow {\rm{GL}}\left(\mathbb{C}[D(L)^\genus]\right)\]
be the \emph{Weil representation} of ${\rm{Mp}}_{2\genus}(\mathbb{Z})$ on~$\mathbb{C}[D(L)^\genus]$; see e.g.~\cite[Section~2.1.3]{zh;phd} for a concrete description in terms of the standard generators of ${\rm{Mp}}_{2\genus}(\mathbb{Z})$ and for its realization as the restriction of the Schrödinger model of the Weil representation of~$\Mp_{2\genus}(\mathbb{A}_f)$ over the finite adeles.
If~$\genus=1$, then~$\rho_{L,\genus}$ is the same representation considered by Borcherds in~\cite[Section~4]{Bor98}.
 In this paper we are primarily interested in \emph{the dual}~$\weil{L,\genus}$ of this representation.

    Let~$k\in\frac{1}{2}\ZZ$.
    A \emph{Siegel modular form} of weight~$k$ and genus~$\genus$ with respect to the dual Weil representation~$\weil{L,\genus}$ is a holomorphic function~$f\colon \HH_\genus\to\CC[D_L^\genus]$ such that
    \[
    f(\gamma\cdot \tau)=\phi(\tau)^{2k}\weil{L,\genus}(\gamma) f(\tau)
    \qquad
    \text{for all~$\gamma=(M,\phi)\in\Mp_{2\genus}(\ZZ)$.}
    \]
    If~$\genus=1$, then we also require that~$f$ is holomorphic at~$\infty$.
    We denote the space of such modular forms by~$M^k_{\genus,L}$.

    To simplify the notation, we define~$e(t)\coloneqq\exp(2\pi i t)$ for every~$t\in \CC$.
    Let~$\halfint_\genus$ be the set of symmetric half-integral~$\genus\times\genus$-matrices.
These are matrices with coefficients in~$\frac{1}{2}\ZZ$ where the diagonal entries are integers.
If~$T\in\halfint_\genus$ is positive definite, resp.\ positive semidefinite, we write~$T>0$, resp.~$T\geq 0$.
    Every~$f\in M^k_{\genus,L}$ admits a Fourier expansion of the form
    \[
    f(\tau)=\sum_{\discel\in D_L^\genus}
    \sum_{\substack{T\in \halfint_\genus - q(\discel) \\ T\geq 0}}
    c_{T,\discel}(f) q^T \mathfrak{e}_\discel,
    \qquad
    \text{where } q^T=e(\trace T\tau).
    \]

    For~$\genus=1$, we denote by~$(M^k_{1,L})^\circ$ the space of \emph{almost cusp forms}, i.e.\ the subspace of modular forms~$f\in M^k_{1,L}$ such that if~$\mu\neq 0$, then~$c_{0,\mu}(f)=0$.

    Classical examples of modular forms are the Eisenstein series, which we now recall; see~\cite{bruinier-habilitation} and~\cite[Section~$2.4$]{bruinierzuffetti} for further information.
        Let~$\Gamma^g_\infty$ be the preimage under the metaplectic cover of the Siegel parabolic subgroup of~$\Sp_{2\genus}(\ZZ)$.
        The \emph{weight~$k$ Siegel Eisenstein series of genus~$\genus$ and type~$\weil{L,\genus}$}, arising from the zero of~$D(L)^\genus$, is defined as 
        \[
        E^k_{\genus,L}(\tau)=\sum_{\gamma\in\Gamma_\infty^\genus\backslash\Mp_{2\genus}(\ZZ)}
        \phi(\tau)^{-2k} \weil{L,\genus}(\gamma)^{-1} \mathfrak{e}_0,\qquad\tau\in\HH_\genus,
        \]
        where~$\mathfrak{e}_0$ above is the basis vector of~$\CC[D(L)^\genus]$ arising from the zero tuple in~$D(L)^\genus$.
        It is easy to see that if~$\mu\neq 0$ in~$D(L)^\genus$, then~$c_{0,\mu}(E^k_{\genus,L})=0$.
        Therefore~$E^k_{1,L}\in(M^k_{1,L})^\circ$.
        If the genus is clear from the context, we will drop it from the notation and simply write~$E_{k,L}$.

        Other examples of modular forms are the theta series attached to definite lattices.
        For the purposes of the present paper, it is enough to recall them under the assumption that the lattice~$L$ is \emph{negative definite}, i.e.~$m=0$.
        Under these assumptions, the Siegel theta series of genus~$g$ associated to~$L$ is the modular form~$\Theta_{L,\genus}\in M^{\rk L/2}_{\genus,L}$ defined as
        \begin{equation}\label{eq;defthetanegdef}
        \Theta_{L,\genus}(\tau)=
        \sum_{\discel\in D(L)^\genus} \sum_{\lambda\in\discel + L^\genus}
        e\big(
        -\trace q(\lambda)  \tau
        \big) \mathfrak{e}_\discel,\qquad \tau\in\HH_\genus.
        \end{equation}
        Let~$\theta(T,\discel)$ denote the Fourier coefficient of index~$(T,\discel)$ of~$\Theta_{L,\genus}$, with~$\discel\in D(L)^\genus$ and~$T\in \halfint_\genus -q (\discel)$ positive semidefinite.
        Then
        \[
        \theta(T,\discel)=\#\{\lambda\in \discel + L^\genus : -q(\lambda)=T\}.
        \]
       In particular~$\Theta_{L,1}\in (M^k_{1,L})^\circ$.

\subsection{Generating series and theta functions}\label{sec;genseries}
    From now on, we fix an even lattice~$\Lambda$ of signature~$(2,n)$, and a weight~$k\coloneqq 1+n/2$.
    Let $[H_{-m,\discel}^\Lambda]$ denote the \emph{cohomology class} of the Heegner divisor of index~$(m,\discel)$ in the de Rham cohomology group~$H^2(\mathcal{F}_\Lambda,\CC)$.
    The \emph{generating series of Heegner divisors} on the modular variety~$\mathcal{F}_\Lambda$ is
\begin{equation}\label{eq;genseries}
    \sum_{\discel\in D(\Lambda)}
    \sum_{\substack{m\in \ZZ - q(\discel)\\ m\ge 0}}
    [H_{-m,\discel}^\Lambda] q^m \mathfrak{e}_\discel,
    \qquad\tau\in\HH.
\end{equation}
Borcherds~\cite{borcherds-GKZ} proved that this generating series converges to an almost cusp form of weight~$k$ with respect to~$\weil{\Lambda,1}$ and with values in~$H^2(\mathcal{F}_\Lambda,\CC)$, i.e.\ it is to an element of~$(M^k_{1,\Lambda})^\circ\otimes H^2(\mathcal{F}_\Lambda,\CC)$.
This result has been generalized by Kudla and Millson in~\cite{kudlamillson-intersection}, where they proved that the generating series of the so-called \emph{special cycles} of codimension~$\genus$ is a Siegel modular form of weight~$k$ with respect to~$\weil{\Lambda,\genus}$. The special cycles of codimension~$1$ are the Heegner divisors introduced above.

The convergence and modularity of the generating series of special cycles is proved in~\cite{kudlamillson-intersection} by realizing such series as the cohomology class of a theta function, nowadays known as the \emph{Kudla--Millson theta function}.
For the purposes of the present paper, we quickly recall how to construct it; see~\cite{kudlamillson-harmonicI, kudlamillson-harmonicII} for details.

Let~$V=\Lambda\otimes\RR$.
Fix a point~$z_0\in \mathcal{D}_\Lambda$ once and for all.
Recall the stabilizer~$\Kmax$ of $z_0$ is a maximal compact subgroup of~$G\coloneqq\bigO(V)$ and that we may realize the symmetric domain~$\mathcal{D}_\Lambda$ as the quotient~$G/\Kmax$.
The \emph{Kudla--Millson Schwartz function}~$\KMschwartzg{\genus}$ is a~$G$-invariant Schwartz function on~$V^\genus$ with values in the space~$\mathcal{A}^{2\genus}(\mathcal{D}_\Lambda)$ of differential~$2\genus$-forms on~$\mathcal{D}_\Lambda$, in short $\KMschwartzg{\genus}\in [\mathcal{S}(V^\genus)\otimes \mathcal{A}^{2\genus}(\mathcal{D}_\Lambda)]^G$.

Since the action of $G$ on $\mathcal{D}_\Lambda$ is transitive, in order to define~$\KMschwartzg{\genus}$ it is enough to construct it as a~$\Kmax$-invariant Schwartz function at the base point~$z_0$.
Kudla and Millson constructed the~$\Kmax$-invariant value of~$\KMschwartzg{\genus}$ at~$z_0$ as an element
    \begin{equation}\label{eq;KMschwartzbasept}
    \KMschwartzg{\genus}(z_0)\in\Big[
    \mathcal{S}(V^\genus)\otimes{\bigwedge}^{2\genus} T_{z_0}^*\mathcal{D}_\Lambda
    \Big]^{\Kmax}
    \end{equation}
    by applying a special differential operator to a Gaussian on~$V^\genus$; see~\cite[Section~$3$]{kudlamillson-harmonicI}.
    
The Kudla--Millson theta function~$\KMtheta{\genus}^\Lambda$ associated to~$\Lambda$ is defined as
\begin{equation}\label{eq: KM theta}
\KMtheta{\genus}^\Lambda(\tau, z)
=
\det y^{-k/2}
\sum_{\discel\in D(\Lambda)^\genus}
\sum_{\lambda\in \discel + \Lambda^\genus}
\big(
\omega_\infty(g_\tau)\KMschwartzg{\genus}
\big)(\lambda,z)\mathfrak{e}_\discel,
\quad\text{$\tau\in\HH_\genus$, $z\in\mathcal{D}_\Lambda$,}
\end{equation}
where $\omega_\infty$ is the dual of the (Schrödinger model of the) Weil representation of~$\Mp_{2\genus}(\RR)\times G$ acting on the space~$\mathcal{S}(V^\genus)$ of Scwhartz functions on~$V^\genus$, and~$g_\tau=\big(\begin{smallmatrix}
    1 & x \\
    0 & 1
    \end{smallmatrix}\big)
    \big(\begin{smallmatrix}
    y^{1/2} & 0 \\
    0 & (y^{-1/2})^t
    \end{smallmatrix}\big)$ is the standard element of~$\Sp_{2\genus}(\RR)$ mapping~$iI_\genus$ to~$\tau=x+iy$.
    If it is clear from the context, we will sometime drop~$\Lambda$ and simply write~$\KMtheta{\genus}$.
   
The theta function~$\KMtheta{\genus}$ has several remarkable properties.
    It is invariant with respect to pull-backs by isometries in~$\widetilde{\rm{O}}^+(\Lambda)$, hence it descends to a function on~$\mathcal{F}_\Lambda$.
    Furthermore, it transforms as a (non-holomorphic) Siegel modular form of weight~$k$ with respect to~$\weil{\Lambda,\genus}$ with values in~$\CC[D(\Lambda)^\genus]\otimes\mathcal{A}^{2\genus}(\mathcal{F}_\Lambda)$.
    Since~$\KMschwartz$ is a \emph{closed} differential form by~\cite[Theorem~3.1]{kudlamillson-harmonicI}, then the~$2\genus$-form~$\KMtheta{\genus}$ is closed on~$\mathcal{F}_\Lambda$, hence we may consider its de Rham cohomology class, which we denote by~$[\KMtheta{\genus}]$.
    It is proved in~\cite{kudlamillson-intersection} that~$[\KMtheta{\genus}]$ is a \emph{holomorphic} Siegel modular form with respect to the symplectic variable~$\tau$, namely~$[\KMtheta{\genus}]\in M^k_{\genus,\Lambda}\otimes H^{2\genus}(\mathcal{F}_\Lambda,\CC)$, and its Fourier expansion is the generating series of codimension~$\genus$ special cycles on~$\mathcal{F}_\Lambda$.
    If~$\genus=1$, then~$[\KMtheta{1}]$ is equal to the generating series~\eqref{eq;genseries}.

\subsection{Pulling back Heegner divisors along lattice embeddings}
Suppose that $L$ and $\Lambda$ are two even lattices of signatures $(2,n')$ and $(2,n)$ respectively such that we have a lattice embedding $L\subset \Lambda$.
This embedding induces a map of modular varieties
\begin{equation}\label{eq: finite map}
\varphi\colon \mathcal{F}_{L}\rightarrow \mathcal{F}_{\Lambda}
\end{equation}
as well as pullback maps in cohomology
\[
\varphi^*\colon H^{2\genus}(\mathcal{F}_\Lambda,\CC)\longrightarrow H^{2\genus}(\mathcal{F}_L,\CC).
\]

As illustrated in~\cite[Section~$9$]{kudla-algcycles}, the pullback of Heegner divisors on~$\mathcal{F}_\Lambda$ under~$\varphi^*$, with~$\genus=1$, can be explicitly written in terms of the Heegner divisors on~$\mathcal{F}_L$.
The cited result is stated in an adelic setting.
We provide here a version of it in terms of lattice cosets, suitable for the purposes of the present paper.
For any sublattice~$\Omega\subset\Lambda$ of finite index, let~$\trg{\genus}{\Lambda}{\Omega}\colon M^k_{\genus,\Omega}\to M^k_{\genus,\Lambda}$ be the map
such that the~$\overline{\discel}$-component of~$\trg{\genus}{\Lambda}{\Omega}(f)$ is
\[
\sum_{\alpha\in(\Lambda/\Omega)^\genus} f_{\alpha + \discel}
\]
for all~$f\in M^k_{\genus,\Omega}$ and~$\overline{\discel}\in D(\Lambda)^\genus$, where~$\discel$ is any fixed preimage of~$\overline{\discel}$ under the natural quotient map~$(\Lambda^\vee/\Omega)^\genus \to (\Lambda^\vee/\Lambda)^\genus$. See~\cite[Section 2.3]{bruinierzuffetti} for further details on~$\trg{\genus}{\Lambda}{\Omega}$.

\begin{proposition}[Kudla]\label{prop:pulbackheegner}
 Let $\Lambda$ be an even lattice of signature $(2,n)$ and let $L$ be a sublattice of signature $(2,n')$ with $n'\le n$. Writing $K=L^\perp$ and $\Omega=L\oplus K$, we have
\begin{equation}\label{eq:main eqkudla}
    \varphi^*\big(
    [\KMtheta{\genus}^\Lambda]
    \big)=
    \trg{\genus}{\Lambda}{\Omega}\big(
    [\KMtheta{\genus}^L]\otimes\Theta_{K,g}
    \big).\end{equation}
    In particular, for any~$\overline{\discel}\in D(\Lambda)$ and~$m\in \ZZ - q(\overline{\discel})$ non-negative we have that
    \begin{equation}\label{eq;2main eqkudla}
        \varphi^*([H_{-m,\overline{\discel}}^\Lambda])
        =
        \sum_{\alpha\in \Lambda/\Omega}
        \sum_{\substack{t\in \ZZ - q((\alpha + \discel)_L) \\ t\geq 0}}
        \theta(m-t,(\alpha + \discel)_K)
        [H_{-t,(\alpha + \discel)_L}^L]
    \end{equation}
    where~$\discel\in \Lambda^\vee/\Omega$ is a fixed preimage of~$\overline{\discel}$ under~$\Lambda^\vee/\Omega \to \Lambda^\vee/\Lambda$.
\end{proposition}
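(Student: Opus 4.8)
\textbf{Proof plan for Proposition~\ref{prop:pulbackheegner}.}

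The plan is to deduce the lattice-coset formula~\eqref{eq:main eqkudla} from Kudla's adelic pullback formula in~\cite[Section~9]{kudla-algcycles}, and then extract~\eqref{eq;2main eqkudla} by reading off Fourier coefficients. First I would recall that the Kudla--Millson theta function~$\KMtheta{\genus}^\Lambda$ is, via the standard dictionary between vector-valued modular forms for~$\weil{\Lambda,\genus}$ and adelic automorphic forms on~$\Mp_{2\genus}(\mathbb{A})$, the image of the adelic Kudla--Millson theta integral attached to the Schwartz function~$\KMschwartzg{\genus}\otimes\varphi_f$, where~$\varphi_f$ is the characteristic function of~$\widehat{\Lambda}=\Lambda\otimes\widehat{\ZZ}$ in~$V(\mathbb{A}_f)^\genus$. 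The embedding~$L\hookrightarrow\Lambda$ induces~$V_L\hookrightarrow V_\Lambda$ with orthogonal complement~$V_K$, $K=L^\perp$, and pulling back a Schwartz function on~$V_\Lambda(\mathbb{A})^\genus$ along this inclusion at each place amounts to restricting to~$V_L(\mathbb{A})^\genus$ and integrating out the~$V_K$-direction only at the archimedean place (at finite places one simply restricts the characteristic function). The archimedean fact one needs is that the archimedean Kudla--Millson Schwartz form behaves well under the orthogonal splitting~$V=V_L\oplus V_K$ with~$V_K$ negative definite: restricting~$\KMschwartzg{\genus}$ on~$V_\Lambda^\genus$ to~$z\in\mathcal{D}_L\subset\mathcal{D}_\Lambda$ and to vectors in~$V_L^\genus$, one obtains the product~$\KMschwartzg{\genus}^L$ on~$V_L^\genus$ times the Gaussian on~$V_K^\genus$, and the latter is exactly what produces the Siegel theta series~$\Theta_{K,\genus}$ of the definite lattice~$K$. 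This is precisely the content of~\cite[Section~9]{kudla-algcycles} (see also the see-saw identity for theta functions), so after translating back through the dictionary one gets that~$\varphi^*[\KMtheta{\genus}^\Lambda]$, as a modular form for~$\weil{\Lambda,\genus}$, is obtained from~$[\KMtheta{\genus}^L]\otimes\Theta_{K,\genus}\in M^k_{\genus,L}\otimes M^{\rk K/2}_{\genus,K}\subset M^k_{\genus,\Omega}$ (using that~$k_L + \rk K/2 = k$ because~$\rk L + \rk K = \rk\Lambda$ forces~$n' + \rk K = n$) by the induction-type map~$\trg{\genus}{\Lambda}{\Omega}$, which is exactly the map encoding ``summing over cosets of~$\Omega$ in~$\Lambda$'' that appears on the automorphic side when passing from level~$\widehat{\Omega}$ data to level~$\widehat{\Lambda}$ data. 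This proves~\eqref{eq:main eqkudla}.

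For~\eqref{eq;2main eqkudla}, set~$\genus=1$. By the final sentence of Section~\ref{sec;genseries}, the~$\overline{\discel}$-component of the Fourier expansion of~$[\KMtheta{1}^\Lambda]$ has~$q^m$-coefficient equal to~$[H_{-m,\overline{\discel}}^\Lambda]$, and similarly for~$L$. Now I would unwind the right-hand side of~\eqref{eq:main eqkudla}: the tensor product~$[\KMtheta{1}^L]\otimes\Theta_{K,1}$ lives in~$M^k_{1,\Omega}\otimes H^2(\mathcal{F}_L,\CC)$ and, writing a class of~$D(\Omega)=D(L)\oplus D(K)$ as a pair~$(\beta_L,\beta_K)$, its~$(\beta_L,\beta_K)$-component has~$q^m$-coefficient~$\sum_{t\geq 0}[H_{-t,\beta_L}^L]\,\theta(m-t,\beta_K)$, where~$\theta(\cdot,\beta_K)$ are the Fourier coefficients of~$\Theta_{K,1}$ recalled in Section~\ref{sec;vv Siegel mod} (note~$\theta(s,\beta_K)=0$ unless~$s\geq 0$, which truncates the sum). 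Applying~$\trg{}{\Lambda}{\Omega}$ replaces the~$\overline{\discel}$-component by the sum over~$\alpha\in\Lambda/\Omega$ of the~$(\alpha+\discel)$-components, where~$\discel\in\Lambda^\vee/\Omega$ is a fixed lift of~$\overline{\discel}$; writing~$(\alpha+\discel)_L$ and~$(\alpha+\discel)_K$ for the~$D(L)$- and~$D(K)$-parts, this yields exactly~\eqref{eq;2main eqkudla}, where the inner sum is over~$t\in\ZZ - q_L((\alpha+\discel)_L)$ because that is the set of possible Fourier indices of~$[\KMtheta{1}^L]$ in its~$(\alpha+\discel)_L$-component. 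Taking cohomology classes throughout is justified since~$[\KMtheta{g}]$ is the cohomology class of a closed form and~$\varphi^*$ commutes with passing to cohomology.

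The main obstacle is the careful bookkeeping in the translation between Kudla's adelic normalization and the lattice-coset conventions used here: one must check that the Weil representation weights match (the half-integral weight shift coming from~$\rk K$ odd is absorbed correctly, and the duality~$\weil{}$ vs.~$\rho$ is applied consistently on both factors), that the metaplectic cocycles multiply correctly under the tensor product, and that the map~$\trg{}{\Lambda}{\Omega}$ as defined before the proposition really is the image, under the dictionary, of the level-raising (restriction-of-Schwartz-function plus summation) operation on the adelic side---in particular that no extra normalizing constant such as~$[\Lambda:\Omega]$ or~$|D(K)|$ appears. Checking this last point amounts to comparing the definition of~$\trg{}{\Lambda}{\Omega}$ with~\cite[Section~2.3]{bruinierzuffetti} and with the computation of the constant term of~$\Theta_{K,1}$ (which is~$\mathfrak{e}_0$, so that~\eqref{eq;2main eqkudla} with~$L=\Lambda$, $K=0$ reduces to the identity), and is essentially a normalization verification rather than a conceptual difficulty.
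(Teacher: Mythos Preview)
Your proposal is correct and rests on the same archimedean input as the paper: the restriction of the Kudla--Millson Schwartz form to~$\mathcal{D}_L$ factors as~$\KMschwartzg{\genus}^L$ times the Gaussian on~$V_K^\genus$, which produces~$\Theta_{K,\genus}$. The route, however, is organized differently. The paper does not pass through the adelic dictionary at all. Instead it factors~$\varphi$ as~$\mathcal{F}_L\xrightarrow{\varphi_L}\mathcal{F}_\Omega\xrightarrow{\varphi_\Omega}\mathcal{F}_\Lambda$, quotes~\cite[Remark~3.10]{bruinierzuffetti} for the identity~$\varphi_\Omega^*[\KMtheta{\genus}^\Lambda]=\trg{\genus}{\Lambda}{\Omega}[\KMtheta{\genus}^\Omega]$ (this handles the trace map in one stroke), and then proves the remaining statement~$\varphi_L^*[\KMtheta{\genus}^\Omega]=[\KMtheta{\genus}^L]\otimes\Theta_{K,\genus}$ directly at the level of differential forms on~$\mathcal{D}_L$, by writing out the theta series~\eqref{eq: KM theta} for~$\Omega$ and splitting the sum over~$D(\Omega)=D(L)\oplus D(K)$. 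What this buys is precisely the elimination of the obstacle you flag: there is no need to match adelic and lattice-coset normalizations, check metaplectic cocycles, or worry about stray constants, because the trace map already lives in the vector-valued setting and the~$\Omega$-to-$L$ step is a bare computation with~\eqref{eq: KM theta}. Your extraction of~\eqref{eq;2main eqkudla} from~\eqref{eq:main eqkudla} by reading off Fourier coefficients is the same as what the paper does implicitly.
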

\begin{proof}
    We may factor~$\varphi$ in \eqref{eq: finite map} as the composition of analogous maps
\[\mathcal{F}_L\xrightarrow{\varphi_L}\mathcal{F}_\Omega\xrightarrow{\varphi_\Omega}\mathcal{F}_\Lambda,\]
where~$\varphi_\Omega$ is a finite cover.
As illustrated in~\cite[Remark~3.10]{bruinierzuffetti} we have that $\varphi^*\big(
        [\KMtheta{\genus}^\Lambda]
        \big)
        =
        \trg{\genus}{\Lambda}{\Omega}\big(
        \varphi_L^*([\KMtheta{\genus}^\Omega])
        \big)$.
    To prove~\eqref{eq:main eqkudla}, it is then enough to show that $\varphi_L^*\big(
[\KMtheta{\genus}^\Omega]
\big)
=
[\KMtheta{\genus}^L]\otimes\Theta_{K,\genus}$.
This follows from the equality of differential forms
    \begin{equation}\label{eq;wwwtprestrtheta}
        \KMtheta{\genus}^\Omega|_{\mathcal{D}_L}
        =
        \KMtheta{\genus}^L\otimes\Theta_{K,\genus}
    \end{equation}
    on the Hermitian symmetric domain~$\mathcal{D}_L$, which we now prove.
    Here~$\KMtheta{\genus}^\Omega|_{\mathcal{D}_L}$ denotes the restriction of the differential form~$\KMtheta{\genus}^\Omega$ to~$\mathcal{D}_L$.

     Let~$V_\Omega=\Omega\otimes\RR$. Define~$V_L$ and~$V_K$ similarly.
     Recall that~$\KMschwartzg{\genus}^\Omega$ is constructed as the pullback under the action of~$G=\bigO(V_\Omega)$ on~$\mathcal{D}_\Omega$ of the value~$\KMschwartzg{\genus}(z_0)$ at a base point.
    Without loss of generality, we may choose~$z_0$ to lie on~$\mathcal{D}_L$.
    From the very definition of the Kudla--Millson Schwartz function at~$z_0$, see e.g.~\cite[Theorem~2.1 (ii)]{funke}, one can show that
    \[
    \KMschwartzg{\genus}^\Omega|_{\mathcal{D}_L}(v,z)
    =
    \KMschwartzg{\genus}^L(v_1,z)\varphi_{V_K,\genus}(v_2),\qquad \text{$v=(v_1,v_2)\in V_L^\genus\oplus V_K^\genus$, $z\in\mathcal{D}_L$,}
    \]
    where~$\varphi_{V_K,\genus}$ is the standard Gaussian on~$V_K^\genus$.
    We may then rewrite $\KMtheta{\genus}^\Omega|_{\mathcal{D}_L}(\tau,z)$ as
    \begin{equation}\label{eq;decrestrict}
    \det y^{-\rk\Omega/4}
         \sum_{\substack{\discel\in D(L)^\genus \\ \lambda_1\in \discel + L^\genus}}
        \sum_{\substack{\kappa\in D(K)^\genus \\ \lambda_2\in \kappa + K^\genus}}
        \big(
        \omega_\infty(g_\tau)\KMschwartzg{\genus}^L
        \big)(\lambda_1,z)
        \cdot
        \big(\omega_\infty(g_\tau)\varphi_{V_K,\genus}\big)(\lambda_2)\mathfrak{e}_\discel\otimes\mathfrak{e}_\kappa
    \end{equation}
    for all~$z\in\mathcal{D}_L$.
    From the above formula, one can easily check that the summation over~$(\kappa,\lambda_2)$ equals~$\det y^{\rk K/4}
        \Theta_{K,\genus}(\tau)$.
    This, replaced in~\eqref{eq;decrestrict}, yields~\eqref{eq;wwwtprestrtheta}.
\end{proof}

Let~$\psi_\Lambda\colon \big((M^{\rk \Lambda/2}_{1,\Lambda})^\circ\big)^\vee\to H^{2}(\mathcal{F}_\Lambda,\CC)$ be the function defined by~$c_{m,\discel}\mapsto [H_{-m,\discel}]$ for all~$\discel\in D(\Lambda)$ and~$m\in\ZZ-q(\discel)$ with~$m>0$.
With the same notation as Proposition~\ref{prop:pulbackheegner}, we deduce that the following diagram commutes.
\begin{equation}\label{eq;gencommdiagrcoho}
    \begin{tikzcd}[column sep=15ex]
    \big((M^{\rk\Lambda/2}_{1,\Lambda})^\circ\big)^\vee \arrow[r, "\quad(\Theta_{K,1})^\vee\circ\,\trg{1}{\Lambda}{\Omega}^\vee\quad"] \arrow[d, "\psi_\Lambda"]
    &
    \big((M^{\rk L/2}_{1,L})^\circ\big)^\vee \arrow[d, "\psi_L"] \\
    H^{2}(\mathcal{F}_\Lambda,\CC) \arrow[r, "\varphi^*"]
    &
    H^{2}(\mathcal{F}_L,\CC)
    \end{tikzcd}
    \end{equation}
Here~$\trg{1}{\Lambda}{\Omega}^\vee\colon \big((M^{\rk \Lambda/2}_{1,\Lambda})^\circ\big)^\vee\to \big((M^{\rk \Lambda/2}_{1,\Omega})^\circ\big)^\vee$ denotes the dual of the trace map, and $\Theta_{K,1}^\vee\colon \big((M^{\rk \Lambda/2}_{1,\Omega})^\circ\big)^\vee\to \big((M^{\rk L/2}_{1,L})^\circ\big)^\vee$ is the dual of~$(M^{\rk L/2}_{1,L})^\circ\to (M^{\rk\Lambda/2}_{1,\Omega})^\circ$, $f\mapsto f\otimes\Theta_{K,1}$.

Let~$(M^{k}_{1,\Lambda})^\circ(\QQ)$ denote the space of weight~$k$ elliptic modular forms with respect to~$\weil{\Lambda}$ and with \emph{rational} Fourier coefficients.
 Under the additional assumption that $n,n'\ge 3$, one has that $H^1(\mathcal{F}_\Lambda, \mathbb{C})=H^1(\mathcal{F}_L, \mathbb{C})=0$, see \cite{Kon88} and \cite[Remark 4.1]{BLMM17}.
 It follows that in this case the commutativity of \eqref{eq;gencommdiagrcoho} may be upgraded to
 \begin{equation}\label{eq;gencommdiagrpic}
   \begin{tikzcd}[column sep=15ex]
    \big((M^{\rk\Lambda/2}_{1,\Lambda})^\circ(\QQ)\big)^\vee \arrow[r, "\quad(\Theta_{K,1})^\vee\circ\,\trg{1}{\Lambda}{\Omega}^\vee\quad"] \arrow[d, "\psi_\Lambda"]
    &
    \big((M^{\rk L/2}_{1,L})^\circ(\QQ)\big)^\vee \arrow[d, "\psi_L"] \\
    \Pic_\QQ(\mathcal{F}_\Lambda) \arrow[r, "\varphi^*"]
    &
    \Pic_\QQ(\mathcal{F}_L)
    \end{tikzcd}
    \end{equation}
    where the maps~$\psi_\Lambda$ and~$\psi_L$ are constructed as their homonym in cohomology, replacing cohomology classes of Heegner divisors with rational classes.

    \section{Extremal rays of $\overline{{\rm Eff}}(\mathcal{F}_\Lambda)$}

    An effective divisor $D$ on a variety $X$ is  \textit{extremal} if it does not admit a non-trivial effective decomposition, that is, for any linear combination $D = a_1D_1 + a_2D_2$ in ${\rm{Pic}}_{\mathbb{R}}(X)$ with $a_i > 0$ and $D_i$ pseudo-effective, the divisors $D$ and $D_i$ are proportional. In this case, we say that the divisor $D$ spans an extremal ray of the pseudo-effective cone $\overline{{\rm Eff}}(X)\subset {\rm{Pic}}_{\mathbb{R}}(X)$.
    In order to prove that a divisor is extremal we will use the following adaptation of the numerical criterion in \cite[Lemma 4.1]{CC14} to show extremality in~$\overline{{\rm Eff}}(Y)\subset {\rm{NS}}_{\mathbb{R}}(Y)$ for $Y$ normal projective and $\mathbb{Q}$-factorial.
    Recall that given an irreducible divisor $D\subset Y$ and a curve $C\subset Y$ the class $\left[C\right]\in{\rm{N}}_1(Y)$ is said to be a {\it{moving curve for}} $D$ if $C$ deforms over an algebraic base such that the total family covers a dense subset of $D$. 
  
    \begin{lemma}\label{lem: cc lem}
       Let $X^{BB}$ be a normal projective variety, $Z\subset X^{BB}$ closed of codimension ${\rm{codim}}(Z)\geq 2$, and assume the complement $X=X^{BB}\setminus Z$ is $\mathbb{Q}$-factorial with trivial Albanese variety. Let $X^c\longrightarrow X^{BB}$ be a $\mathbb{Q}$-factorial compactification of $X$ over $X^{BB}$. Let $D$ be an irreducible effective divisor on $X$ and $D^c$ its closure in $X^c$. If $\left[C\right]$ is a moving curve for $D^c$ such that  $C\cap \partial X^c=\emptyset$ and $D^c\cdot \left[C\right]<0$, then: 
       \begin{enumerate}
           \item The divisor $D$ is extremal in $\overline{{\rm Eff}}(X)$.
           \item The closure $\overline{D}$ is extremal in $\overline{{\rm Eff}}(\overline{X})$, where $\overline{X}\longrightarrow X^{BB}$ is any $\mathbb{Q}$-factorial compactification of $X$ over $X^{BB}$. 
       \end{enumerate}
    \end{lemma}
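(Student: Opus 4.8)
The plan is to deduce the lemma from the numerical criterion of \cite[Lemma 4.1]{CC14}, which in its essential form asserts that on a projective variety an irreducible effective divisor carrying a moving curve of negative intersection spans an extremal ray of the pseudoeffective cone; the extra content here is (i) to run that criterion on an \emph{arbitrary} $\mathbb Q$-factorial compactification $\overline X$ of $X$ over $X^{BB}$, and (ii) to descend the conclusion to $X=X^{BB}\setminus Z$. The key preliminary remark is that the hypothesis $C\cap\partial X^c=\emptyset$ simply says $C\subset X$, since $C\subset D^c\subset X^c$ and $X=X^c\setminus\partial X^c$. As $X$ is a common dense open of $X^{BB}$, of $X^c$, and of every $\mathbb Q$-factorial compactification $\overline X\to X^{BB}$, the curve $C$ is automatically a complete curve in each of them, disjoint from the respective boundary; hence for a $\QQ$-Cartier divisor $D'$ the number $D'\cdot[C]$ is computed inside $X$ and does not depend on which model $D'$ is viewed on (identifying divisors via closure/strict transform), and it vanishes on every boundary divisor. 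In particular $\overline D\cdot[C]=D^c\cdot[C]<0$ for the closure $\overline D\subset\overline X$ of $D$, and the covering family of $C$ in $D^c$ gives, on every such model, the positivity that for an effective $\RR$-divisor $E$ with $\overline D\not\subset\operatorname{Supp}E$ one has $E\cdot[C]\ge0$ (a general member of the family avoids the proper closed subset $\operatorname{Supp}E\cap D^c$ of $D^c$).

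To obtain (2) I would run the argument of \cite[Lemma 4.1]{CC14} on a fixed projective $\mathbb Q$-factorial $\overline X$. Suppose $\overline D=\xi_1+\xi_2$ in $\Pic_{\RR}(\overline X)$ with $\xi_i$ pseudoeffective (positive scalars absorbed); approximate $\xi_i=\lim_n[E_{i,n}]$ by effective $\RR$-divisors and split $E_{i,n}=m_{i,n}\overline D+E'_{i,n}$ with $m_{i,n}=\operatorname{ord}_{\overline D}E_{i,n}\ge0$ and $\overline D\not\subset\operatorname{Supp}E'_{i,n}$. The positivity above gives $E_{i,n}\cdot[C]\ge m_{i,n}(\overline D\cdot[C])$; summing, dividing by the negative number $\overline D\cdot[C]$, and passing to the limit yields $\liminf_n(m_{1,n}+m_{2,n})\ge1$. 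Pairing instead with the class $[M]$ of a general complete intersection curve of $\overline X$ — a moving curve for $\overline X$ with $\overline D\cdot[M]>0$ — and using $E'_{i,n}\cdot[M]\ge0$ yields $m_{i,n}\le(E_{i,n}\cdot[M])/(\overline D\cdot[M])\to u_i\ge0$ with $u_1+u_2=1$. Combining the two bounds forces $m_{i,n}\to u_i=:m_i$ with $m_1+m_2=1$, so $\xi_i-m_i\overline D=\lim_n[E'_{i,n}]$ is pseudoeffective, $(\xi_1-m_1\overline D)+(\xi_2-m_2\overline D)=0$, and since $\overline{{\rm Eff}}(\overline X)$ contains no line we conclude $\xi_i=m_i\overline D$. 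This is (2); specialising to $\overline X=X^c$ it shows in addition that $D^c$ spans an extremal ray of $\overline{{\rm Eff}}(X^c)$.

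For (1) I would pass to $X=X^{BB}\setminus Z$. Since $\operatorname{codim}_{X^{BB}}Z\ge2$ and $X$ is $\mathbb Q$-factorial with trivial Albanese (so that $\Pic_{\RR}=N^1_{\RR}$ and $\overline{{\rm Eff}}(X)$ is the cone in the statement), restriction of divisors identifies $\Pic_{\RR}(X)$ with the quotient of $\Pic_{\RR}(X^c)$ by the span of the boundary components $B_j\subset\partial X^c$; under this quotient $\overline{{\rm Eff}}(X)$ is the image of $\overline{{\rm Eff}}(X^c)$, $[D]$ is the image of $[D^c]$, and the functional $(\,\cdot\,)\cdot[C]$, which kills $\langle B_j\rangle$, descends, with $[D]\cdot[C]<0$. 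One then descends the extremal ray: any decomposition $[D]=\xi_1+\xi_2$ with $\xi_i\in\overline{{\rm Eff}}(X)$ lifts to $\overline{{\rm Eff}}(X^c)$, and the computation of the previous paragraph — carried out with the boundary-insensitive curve $[C]$ playing the role of the negative curve — forces $\xi_i\in\RR_{\ge0}[D]$; equivalently, $\overline{{\rm Eff}}(X^c)\to\overline{{\rm Eff}}(X)$ sends the extremal ray $\RR_{\ge0}[D^c]$ to the extremal ray $\RR_{\ge0}[D]$. (In essence this is \cite[Lemma 4.1]{CC14} applied to the pair $X\subset X^c$.)

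The step I expect to cost the most care is the last one: making the passage between $\overline{{\rm Eff}}(X^c)$ and $\overline{{\rm Eff}}(X)$ rigorous — that $\overline{{\rm Eff}}(X)$ is \emph{exactly} the image of $\overline{{\rm Eff}}(X^c)$ (so that every pseudoeffective class on $X$ lifts), that the order-along-$\overline D$ decompositions and the limits of approximants survive the quotient by the boundary, and, above all, that no boundary component is absorbed into a nontrivial effective decomposition of $\overline D$ or of $D$. Each of these reduces to the single structural fact that $C$ is a complete curve disjoint from the boundary, so that its intersection pairing is blind to the choice of $\mathbb Q$-factorial model over $X^{BB}$ — which is precisely why the hypothesis $C\cap\partial X^c=\emptyset$ is imposed; handling it cleanly will likely go through the observation that the boundary classes span a face of $\overline{{\rm Eff}}(X^c)$.
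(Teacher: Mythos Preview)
Your overall strategy is the same as the paper's: invoke \cite[Lemma~4.1]{CC14} on a projective $\mathbb{Q}$-factorial model and then transport the conclusion. Your treatment of (2) is more detailed than the paper's (which simply cites \cite{CC14} once it has checked that $C$ moves in $\overline{D}$ and $C\cdot\overline{D}<0$); one point you use but do not justify is that $\overline{\mathrm{Eff}}(\overline X)$ contains no line. The paper supplies this by showing $H_1(\overline X,\mathbb{Q})=0$ from the trivial Albanese of $X$ together with $\mathrm{codim}\,Z\ge 2$ (so $\mathrm{Pic}_{\mathbb R}(\overline X)=\mathrm{NS}_{\mathbb R}(\overline X)$), and you should insert that argument.

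Where the two proofs genuinely diverge is (1). You propose to descend extremality along the quotient $\mathrm{Pic}_{\mathbb R}(X^c)\twoheadrightarrow\mathrm{Pic}_{\mathbb R}(X)$, and you correctly flag that the delicate step is lifting a pseudoeffective decomposition on $X$ back to $X^c$ (equivalently, that $\overline{\mathrm{Eff}}(X)$ is the image of $\overline{\mathrm{Eff}}(X^c)$); images of closed cones need not be closed, so this really does require work, and your final remark that ``boundary classes span a face'' is in the right direction but not yet a proof. The paper avoids this entirely with a concrete rigidity argument: starting from an effective decomposition $D=a_1D_1+a_2D_2$ on $X$, the localization sequence gives $D^c=a_1D_1^c+a_2D_2^c-b\delta$ with $\delta$ supported on the boundary and $b\ge 0$; since $C\cap\partial X^c=\emptyset$ one has $C\cdot(D^c+b\delta)=C\cdot D^c<0$, so $D^c$ is in the stable base locus of $|m(D^c+b\delta)|$, whence $|m(D^c+b\delta)|=mD^c+|mb\delta|$; but $\delta$ is contracted by $X^c\to X^{BB}$, so $|mb\delta|$ is a single rigid divisor, forcing $b=0$ and $D_i^c$ proportional to $D^c$. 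This bypasses all questions about closures of cones and about lifting pseudoeffective limits. Your route can presumably be completed (e.g.\ by running the CC14 approximation directly on $X$ using complete-intersection curves of $X^{BB}$ missing $Z$ for the auxiliary moving class $[M]$), but the paper's linear-system trick is shorter and worth knowing.
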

\begin{remark}Note that for any even lattice $\Lambda$ of signature $(2,n)$ with $n\geq3$, the orthogonal modular variety $\mathcal{F}_\Lambda$ has trivial Albanese variety, see \cite{Kon88} and \cite[Remark~4.1]{BLMM17}. Therefore $\mathcal{F}_\Lambda\subset\mathcal{F}_\Lambda^{BB}$ satisfies the hypothesis of Lemma \ref{lem: cc lem}.
\end{remark}
\begin{proof}[Proof of Lemma~\ref{lem: cc lem}]
 Let $Y\subset X^c$ be the boundary. Begin by noting that since $X$ has trivial Albanese and the image of $Y$ under $X^c\longrightarrow X^{BB}$ has codimension at least $2$,  by \cite[Section~2]{Kon88} (see also \cite[Section~1]{Mum67}) we have 
\[
H_1(X^c,\mathbb{Q})=H_1(X,\mathbb{Q})=H_1({\rm{Alb}}(X),\mathbb{Q})=0.
\]
In particular the Albanese variety of $X^c$ is trivial and ${\rm{NS}}_{\mathbb{R}}(X^c)={\rm{Pic}}_{\mathbb{R}}(X^c)$. 

Now assume $D=a_1D_1+a_2D_2$ in ${\rm{Pic}}_{\mathbb{R}}(X)$ with $D_i$ effective and $a_i>0$. Note that since $[C]$ is a moving curve for $D^c$ and $C\cdot D^c<0$, the divisor $D^c$ is extremal in $\overline{\rm{Eff}}(X^c)$, see \cite[Lemma 4.1]{CC14}. Consider the localization sequence 
\begin{equation}
\label{sec4:eq:loc_seq}
{\rm{CH}}_{n-1}(Y)_{\mathbb{Q}}\overset{i_*}{\longrightarrow} {\rm{CH}}_{n-1}(X^c)_{\mathbb{Q}}\overset{j^*}{\longrightarrow}{\rm{CH}}_{n-1}(X)_{\mathbb{Q}}\longrightarrow 0,
\end{equation}
where $i:Y\hookrightarrow X^c$, $j:X\hookrightarrow X^c$ are the natural inclusions and $n=\dim X$.

If ${\rm{codim}}(Y)\geq 2$, then ${\rm{CH}}_{n-1}(Y)=0$ and $j^*$ is an isomorphism.
Hence $D^c=a_1D_1^c+a_2D_2^c$ in ${\rm{CH}}_{n-1}(X^c)_{\mathbb{Q}}={\rm{Pic}}_{\mathbb{Q}}(X^c)$.
In particular, since $D^c$ is extremal in~$\overline{\rm{Eff}}(X^c)$, we have that $D^c, D_1^c, D_2^c$ are all proportional in ${\rm{Pic}}_{\mathbb{R}}(X^c)$.
Restricting via the isomorphism~$j^*$ 
yields that $D, D_1, D_2$ are proportional in ${\rm{CH}}_{n-1}(X)_{\mathbb{Q}}={\rm{Pic}}_{\mathbb{Q}}(X)$. Therefore,~$D$ is extremal in~$\overline{\rm{Eff}}(X)$. 

If ${\rm{codim}}(Y)=1$, by \eqref{sec4:eq:loc_seq} (after extending coefficients) one has
\[
D^c=a_1D_1^c+a_2D_2^c-b\delta\;\;\hbox{ in }{\rm{Pic}}_{\mathbb{R}}(X^c)={\rm{NS}}_{\mathbb{R}}(X^c) 
\]
where $\delta$ is a divisor supported on $Y$.
Since $D^c$ is extremal in $\overline{\rm{Eff}}(X^c)$, we know~$b\geq 0$ and~$D^c+b\delta$ is effective.
After scaling $D^c+b\delta$ by an appropriate constant, we may assume that~$(D^c+b\delta)$ is Cartier.
Since $C\cap Y=\emptyset$, have $C\cdot (D^c+b\delta)=C\cdot D^c<0$. Because~$C$ covers a dense subset of $D^c$, for sufficiently large $m>0$ the divisor $D^c$ is in the base locus of the linear system $\left|m(D^c+b\delta)\right|$. More concretely
\begin{equation}
\label{sec3:eq:rigid}
\left|m(D^c+b\delta)\right|=\left|m(D^c+b\delta)-mD^c\right|+mD^c,
\end{equation}
cf.~\cite[Lemma 6.4]{BM24}. But $\delta$ is contracted via $X^c\longrightarrow X^{BB}$, in particular $\left|mb\delta\right|$ and therefore \eqref{sec3:eq:rigid} consists of a single rigid divisor, yet $m\left(a_1D_1^c+a_2D_2^c\right)\in \left|m(D^c+b\delta)\right|$. Since both $D_1^c$ and $D_2^c$ are not supported on $\delta$, this can only happen if $b=0$ and both $D_1^c, D_2^c$ are proportional to $D^c$. After restricting to~$X$ via~$j^*$, we have that~$D$, $D_1$, $D_2$ are all proportional. This finishes the argument that~$D$ is extremal in~$\overline{\rm{Eff}}(X)$.

For the second statement, note that the same argument as before shows that ${\rm{Pic}}_{\mathbb{R}}(\overline{X})$ equals~${\rm{NS}}_{\mathbb{R}}(\overline{X})$. Then extremality follows from the fact that $C$ covers a Zariski dense set in $\overline{D}$ and $C\cdot\overline{D}<0$.
\end{proof}

A {\textit{primitive representative}} for the pair $(-m,\mu)$ is a primitive element $\rho\in \Lambda$ such that $\mu=\frac{\rho}{{\rm{div}}_{\Lambda}(\rho)}+\Lambda\in D(\Lambda)$ and $-m=\frac{q(\rho)}{{\rm{div}}_{\Lambda}(\rho)^2}$, where the \textit{divisibility} ${\rm{div}}_{\Lambda}(\rho)$ of $\rho$ in $\Lambda$ is the positive generator of the ideal $\langle \rho, \Lambda\rangle\subset \ZZ$, or equivalently, the order $d_\mu$ of $\mu$.
If~$\Lambda$ splits off two copies of $U$, the primitive representative is unique up to the action of~$\widetilde{\rm{O}}^+(\Lambda)$.

\begin{corollary}
\label{cor: degree condition}
Let $\Lambda$ be an even lattice of signature $(2,n)$ with $n\ge 4$, and let~$\rho$ be a primitive representative of~$(-m,\mu)$. Let  $K=\mathbb{Z}\rho$, $L=\rho^{\perp \Lambda}$, and $\varphi\colon \mathcal{F}_L\rightarrow \mathcal{F}_\Lambda$ the map on orthogonal modular varieties induced by the inclusion $L\subset \Lambda$.
Denote by~$P$ the image of~$\varphi$.
If ${\rm \deg}(\varphi^*P)<0$, then 
\begin{enumerate}
    \item $P$ generates an extremal ray in $\overline{{\rm{Eff}}}(\mathcal{F}_\Lambda)$. 
    \item The closure $\overline{P}$ is an extremal ray in $\overline{{\rm{Eff}}}(\overline{\mathcal{F}}_\Lambda)$ for any normal projective $\mathbb{Q}$-factorial variety $\overline{\mathcal{F}}_\Lambda$ compactifying $\mathcal{F}_{\Lambda}$ over $\mathcal{F}_{\Lambda}^{BB}$.
\end{enumerate}   
\end{corollary}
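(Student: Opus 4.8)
The plan is to deduce both conclusions directly from the numerical criterion of Lemma~\ref{lem: cc lem}. I would apply it with $X^{BB}=\mathcal{F}_\Lambda^{BB}$, with $Z=\mathcal{F}_\Lambda^{BB}\setminus\mathcal{F}_\Lambda$ (of codimension $\ge 2$, since the Baily--Borel boundary has dimension $\le 1$ and $n\ge 3$), with $X=\mathcal{F}_\Lambda$ (which satisfies the hypotheses of that lemma, being $\QQ$-factorial with finite quotient singularities and having trivial Albanese, as recorded in the remark following Lemma~\ref{lem: cc lem}), with a fixed normal projective $\QQ$-factorial compactification $X^c$ of $\mathcal{F}_\Lambda$ over $\mathcal{F}_\Lambda^{BB}$, and with $D=P$. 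One first checks that $P$ is admissible for the lemma: it is irreducible, being the image of the irreducible variety $\mathcal{F}_L$ under the finite morphism $\varphi$; it has codimension one in $\mathcal{F}_\Lambda$, since $\varphi$ is finite and $\dim\mathcal{F}_L=n-1$; and it is an effective $\QQ$-Cartier divisor, because $\mathcal{F}_\Lambda$ is $\QQ$-factorial. Thus everything reduces to producing a moving curve $[C]$ for the closure $\overline{P}\subset X^c$ with $C\cap\partial X^c=\emptyset$ and $\overline{P}\cdot[C]<0$; as anticipated in Section~\ref{sec;strategy intro}, this curve will come from a complete-intersection curve on $\mathcal{F}_L$ pushed forward along $\varphi$.

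To build $C$, I would fix the projective embedding $\mathcal{F}_L^{BB}\subset\mathbb{P}^N$ given by a large multiple of the Hodge $\QQ$-line bundle $\lambda_L$, so that $\mathcal{O}_{\mathcal{F}_L^{BB}}(1)$ is a positive power of $\lambda_L$, and take $C_0$ to be the intersection of $n-2$ general members of $|\mathcal{O}(1)|$. Since $\dim\mathcal{F}_L=n-1\ge 3$ (indeed $n\ge 4$) while $\partial\mathcal{F}_L^{BB}$ has dimension $\le 1$, a Bertini argument shows that a general such $C_0$ is an irreducible, reduced curve contained in $\mathcal{F}_L$ and passing through a general point of $\mathcal{F}_L$; as the hyperplane sections vary these curves are all numerically equivalent and their union is dense in $\mathcal{F}_L$. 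Set $C=\varphi(C_0)$, regarded inside $\mathcal{F}_\Lambda\subset X^c$. Then $C$ is an irreducible curve contained in the open locus $\mathcal{F}_\Lambda$, so $C\cap\partial X^c=\emptyset$; and since $\varphi$ is finite with image $P$ dense in $\overline{P}$, the curves $\varphi(C_0)$ sweep out a dense subset of $\overline{P}$ as $C_0$ varies. Hence $[C]$ is a moving curve for $\overline{P}$.

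It then remains to check that $\overline{P}\cdot[C]<0$, which is where the abstract intersection number must be matched with the concrete hypothesis. Since $C$ lies in the open locus $\mathcal{F}_\Lambda$, the number $\overline{P}\cdot[C]$ only sees the class $\mathcal{O}_{\mathcal{F}_\Lambda}(P)$; writing $e\ge 1$ for the degree of the finite map $C_0\to C$ induced by $\varphi$ and using the projection formula,
\[
\overline{P}\cdot[C]
=\mathcal{O}_{\mathcal{F}_\Lambda}(P)\cdot[\varphi(C_0)]
=\tfrac{1}{e}\,\big(\varphi^*\mathcal{O}_{\mathcal{F}_\Lambda}(P)\big)\cdot[C_0]
=\tfrac{1}{e}\,\mathcal{O}_{\mathcal{F}_L}(\varphi^*P)\cdot[C_0].
\]
Now $[C_0]$ is, up to a fixed positive multiple, the curve class $\lambda_L^{n-2}$ on $\mathcal{F}_L^{BB}$, and it is represented by curves inside $\mathcal{F}_L$; therefore $\mathcal{O}_{\mathcal{F}_L}(\varphi^*P)\cdot[C_0]$ is, by the definition of degree used in the statement, a fixed positive multiple of $\deg(\varphi^*P)$ (equivalently, of $\vol(P^2)=\int_{\mathcal{F}_\Lambda}[P]\wedge[P]\wedge\omega^{n-2}$). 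By hypothesis $\deg(\varphi^*P)<0$, so $\overline{P}\cdot[C]<0$, and Lemma~\ref{lem: cc lem} yields simultaneously that $P$ is extremal in $\overline{{\rm Eff}}(\mathcal{F}_\Lambda)$ and that $\overline{P}$ is extremal in $\overline{{\rm Eff}}(\overline{\mathcal{F}}_\Lambda)$ for every normal projective $\QQ$-factorial variety $\overline{\mathcal{F}}_\Lambda$ compactifying $\mathcal{F}_\Lambda$ over $\mathcal{F}_\Lambda^{BB}$.

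The main obstacle I anticipate is not a hard estimate but the bookkeeping in this last step: the compactifications $X^c$ and $\overline{\mathcal{F}}_\Lambda$ are arbitrary, $\varphi$ need not extend to them, and $\overline{P}$ need not be the pullback of any divisor from $\mathcal{F}_\Lambda^{BB}$, so one must argue carefully that $\overline{P}\cdot[C]$ genuinely depends only on the class of $P$ over the open locus $\mathcal{F}_\Lambda$ --- which is exactly why $C$ is chosen to avoid the boundary --- before pushing down and recognizing the resulting number, up to a harmless positive scalar, as $\deg(\varphi^*P)$. The Bertini-type input (irreducibility of $C_0$, its disjointness from the low-dimensional boundary, its moving in a family) is routine, and in concrete cases the hypothesis $\deg(\varphi^*P)<0$ is checked separately via the pullback formula for Heegner divisors (Proposition~\ref{prop:pulbackheegner}) and explicit Eisenstein coefficients.
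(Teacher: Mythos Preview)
Your argument is correct and follows essentially the same route as the paper: both apply Lemma~\ref{lem: cc lem} with the moving curve obtained by pushing forward the class $\lambda_L^{n-2}$ from~$\mathcal{F}_L$, use that this curve avoids the boundary, and identify the intersection number $\overline{P}\cdot[C]$ with a positive multiple of $\deg(\varphi^*P)$ via the projection formula. The only cosmetic difference is that the paper passes through an explicit resolution $\widetilde{\mathcal F}_L\to\mathcal F_\Lambda^{\rm tor}$ of the rational map between toroidal compactifications and works with the pushforward class $\widetilde\varphi_*\big[(\widetilde f_L^*r\lambda_L)^{n-2}\big]$, whereas you compute directly with complete-intersection curves in the open locus; the content is the same.
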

In the above ${\rm{deg}}(D)$ stands for the degree of the closure $\overline{D}$ in the Baily--Borel model~$\mathcal{F}_L^{BB}\subset\mathbb{P}^N$. Equivalently, this is the degree of $\overline{D}$ in a toroidal compactification~$\mathcal{F}_L^{\rm tor}$ with respect to the nef class given by the pullback to~$\mathcal{F}_L^{\rm tor}$ of an appropriate multiple of~$\lambda_L$.

\begin{proof}
Consider the map~$\varphi\colon \mathcal{F}_L\rightarrow \mathcal{F}_{\Lambda}$
induced by the embedding $L\subset \Lambda$. Recall that 
\[
\mathcal{F}_{L}^{BB}={\rm{Proj}}\Big(\bigoplus_{k}H^0\big(\mathcal{F}_L,\lambda_L^{\otimes k}\big)\Big)\subset \mathbb{P}^N,
\]
where $H^0\left(\mathcal{F}_L,\lambda_L^{\otimes k}\right)$ can be seen as the space of scalar-valued orthogonal modular forms on the affine cone $\mathcal{D}_L^\bullet=\{z\in L\otimes \mathbb{C}\mid \mathbb{C}^*z\in \mathcal{D}_L\}$ of weight $k$ and with trivial character.
In particular, every such modular form on $\mathcal{D}_\Lambda^\bullet$ restricts to a modular form on $\mathcal{D}_L^\bullet$ of the same weight, giving us a map of graded algebras
\[
\bigoplus_{k}H^0\left(\mathcal{F}_\Lambda,\lambda_\Lambda^{\otimes k}\right)\longrightarrow \bigoplus_{k}H^0\left(\mathcal{F}_L,\lambda_L^{\otimes k}\right).
\]
The map $\varphi$ then extends to a map on Baily--Borel models $\varphi^{BB}:\mathcal{F}_L^{BB}\longrightarrow\mathcal{F}_\Lambda^{BB}$.
 Let~$\mathcal{F}_L^{\rm tor}$ be a toroidal compactification of $\mathcal{F}_L$, see \cite{AMRT10}, and consider the rational map~${\varphi^{\rm tor}: \mathcal{F}_L^{\rm tor}\dashrightarrow \mathcal{F}_\Lambda^{\rm tor}}$ extending $\varphi$ to the type II boundary. Let $\widetilde{\varphi}:\widetilde{\mathcal{F}_L}\longrightarrow\mathcal{F}_\Lambda^{\rm tor}$ be a resolution of $\varphi^{\rm tor}$. We thus have a diagram

\begin{equation}
\label{sec3:diag:alpha-beta}
\begin{tikzcd}
\widetilde{\mathcal{F}_L}\arrow[dd, bend right, "\widetilde{f}_L"']\arrow[r, "\widetilde{\varphi}"]\arrow[d]&\mathcal{F}_\Lambda^{\rm tor}\arrow[d, equal]\arrow[dd, bend left, "f_\Lambda"]\\
\mathcal{F}_L^{\rm tor}\arrow[r, dashed, "\varphi^{{\rm tor}}"]\arrow{d} &\mathcal{F}_\Lambda^{\rm tor}\arrow[d]\\
\mathcal{F}_L^{BB}\arrow[r, "\varphi^{BB}"]&\mathcal{F}_\Lambda^{BB}.
\end{tikzcd}
\end{equation}

Note that since the boundaries of both $\mathcal{F}_L^{BB}$ and $\mathcal{F}_{\Lambda}^{BB}$ are one dimensional and $\dim \mathcal{F}_L=\dim \mathcal{F}_\Lambda-1=n-1\geq 3$, both $\widetilde{f}_L$ and $f_\Lambda$ satisfy the hypotheses of the map~$X^c\rightarrow X^{BB}$ of Lemma~\ref{lem: cc lem}.

For some positive constant $r>0$, the class $r\lambda_L\subset \mathcal{F}_L$ corresponds to the hyperplane class coming from $\mathcal{F}_L^{BB}\subset\mathbb{P}^N$.
In particular it is Cartier and since $\dim \mathcal{F}_L=n-1\geq 3$ and the boundary $\mathcal{F}_L^{BB}\backslash \mathcal{F}_L$ is one-dimensional, we can find a representative $C$ of the curve class $[C]=\big(\widetilde{f}_L^*r\lambda_L\big)^{n-2}\in {\rm{N}}_1\big(\widetilde{\mathcal{F}_{L}}\big)$ which does not meet the boundary of $\widetilde{\mathcal{F}_{L}}$.
Note that $\left[C\right]$ is a movable curve on $\widetilde{\mathcal{F}_L}$ and since~$\varphi(\mathcal{F}_L)=P$, the curve class $\widetilde{\varphi}_*\left[C\right]\in {\rm{N}}_1\left(\mathcal{F}_{\Lambda}^{\rm tor}\right)$ is a moving curve for the divisor~$\overline{P}\subset \mathcal{F}_\Lambda^{\rm tor}$. 

Now we know
\[
\left[C\right]\cdot \overline{\varphi^*P}={\rm{deg}}(\varphi^*P)<0.
\]
Moreover the difference between $\widetilde{\varphi}^*\overline{P}$ and the closure in $\widetilde{\mathcal{F}_L}$ of $\varphi^*P$ is supported on the boundary of $\widetilde{\mathcal{F}_L}$. Thus since $C$ does not intersect the boundary, and~${[C]=\big(\widetilde{f}_L^*r\lambda_{L}\big)^{n-2}}$ we have
\[[C]\cdot\widetilde{\varphi}^*\overline{P}<0.\]

Then by the projection formula (e.g. \cite[Proposition 2.7]{Voi14} or \cite[Section 8.1]{Ful98}) we have
\[
\widetilde{\varphi}_*[C]\cdot \overline{P}=\widetilde{\varphi}_*\big([C]\cdot\widetilde{\varphi}^*\overline{P}\big)=[C]\cdot\widetilde{\varphi}^*\overline{P}<0.
\]
The result then follows from Lemma \ref{lem: cc lem}.
\end{proof}

We now detail two methods to establish that ${\rm \deg}(\varphi^*P)<0$ and thereby deduce the conclusion of Corollary \ref{cor: degree condition}. The first describes ${\rm \deg}(\varphi^*P)$ in terms of coefficients of the Eisenstein series $E_{\frac{n+1}{2},L}$ for $L$ and gives bounds on these coefficients guaranteeing that ${\rm \deg}(\varphi^*P)<0$. The second method works under the additional assumption that~$\Lambda$ splits off two hyperbolic planes, so that~$P=P_{-m,\mu}^{\Lambda}$. It describes ${\rm \deg}(\varphi^*P)$ in terms of the volume~${\rm vol}(P_{-m, \mu}^2)$ on $\mathcal{F}_\Lambda$ and computes this volume in terms of pullbacks of Siegel Eisenstein series.

\subsection{Extremality via Eistenstein series on a sublattice}
Let $\mu\in D(\Lambda)$ and $m\in \mathbb{Z}-q(\mu)$ non-negative. A primitive Heegner divisor $P_{-m, \mu}$ on an orthogonal modular variety $\mathcal{F}_\Lambda$ is nonempty if and only if the pair $(-m,\mu)$ has a primitive representative.
Recall that the latter is a primitive element $\rho\in \Lambda$ such that $\mu=\frac{\rho}{{\rm{div}}_{\Lambda}(\rho)}+\Lambda\in D(\Lambda)$ and~$-m=\frac{q(\rho)}{{\rm{div}}_{\Lambda}(\rho)^2}$.

\begin{theorem}
\label{thm: extremality2}
Let $\Lambda$ be an even lattice of signature $(2,n)$ with $n\ge 4$ and~$\rho$ a primitive representative of~$(-m,\mu)$.
Let~$K=\mathbb{Z}\rho$ and $L=\rho^{\perp \Lambda}$ and denote by~$P$ the divisor arising as the image of~$\varphi\colon\mathcal{F}_{L}\to\mathcal{F}_\Lambda$. Then $P$ is extremal in $\overline{{\rm{Eff}}}(\mathcal{F}_\Lambda)$ if
\begin{equation}
\label{sec3:eq:bound}
\begin{split}
&-\sum_{\substack{r\in\ZZ_{>0} \\ r^2| m}}\mu(r)
\sum_{\substack{\overline\beta\in D(\Lambda)\\r\overline\beta=\mu}}
\sum_{\substack{\alpha\in\Lambda/L\oplus K\\t\in\ZZ-q((\alpha+\beta)_L)\\ t>0}}
        \theta\left(\frac{m}{r^2}-t,(\alpha+\beta)_K\right)c_{t,(\alpha+\beta)_L}\left(E_{\frac{n+1}{2},L}\right)\\
        &\quad
        <\sum_{\substack{r\in\ZZ_{>0} \\ r^2| m}}\mu(r)\sum_{\substack{\overline\beta\in D(\Lambda)\\r\overline\beta=\mu}}
        \sum_{\substack{\alpha\in\Lambda/L\oplus K\\ (\alpha+\beta)_L = 0}}\theta\left(\frac{m}{r^2},(\alpha+\beta)_K\right),
\end{split}
\end{equation}
where~$\beta$ is a fixed preimage of~$\overline\beta$ under~$\Lambda^\vee/L\oplus K\to\Lambda^\vee/\Lambda$ and $\mu(\cdot)$ is the M\"{o}bius function. Further, its closure $\overline{P}$ is extremal in $\overline{{\rm{Eff}}}\left(\overline{\mathcal{F}}_\Lambda\right)$ for any normal projective $\mathbb{Q}$-factorial variety compactifying $\mathcal{F}_{\Lambda}$ over $\mathcal{F}_{\Lambda}^{BB}$.

\end{theorem}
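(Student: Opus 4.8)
The plan is to derive both conclusions from Corollary~\ref{cor: degree condition}, whose hypotheses on $K=\ZZ\rho$, $L=\rho^{\perp\Lambda}$ and $\varphi$ are exactly those of the theorem; so it remains only to verify that~\eqref{sec3:eq:bound} forces $\deg(\varphi^{*}P)<0$. I would first pass from the single component $P$ to the whole primitive Heegner divisor $P_{-m,\mu}^{\Lambda}$. Since $\rho$ is primitive, $P=\varphi(\mathcal{F}_L)$ is one irreducible component of $P_{-m,\mu}^{\Lambda}$, occurring with multiplicity $\varepsilon=1$ if $2\mu\neq 0$ and $\varepsilon=2$ if $2\mu=0$. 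As $\varphi$ is finite onto $P$, any other irreducible component $P'$ of $P_{-m,\mu}^{\Lambda}$ is distinct from $P=\varphi(\mathcal{F}_L)$ and hence does not contain the image of $\varphi$, so $\varphi^{*}P'$ is an effective $\QQ$-divisor on $\mathcal{F}_L$ with $\deg(\varphi^{*}P')\ge 0$. Writing $P_{-m,\mu}^{\Lambda}=\varepsilon\bigl(P+\sum_{P'\neq P}P'\bigr)$ in $\Pic_{\QQ}(\mathcal{F}_\Lambda)$ and pulling back, this gives $\deg(\varphi^{*}P)\le\frac1\varepsilon\deg(\varphi^{*}P_{-m,\mu}^{\Lambda})$; so it is enough to show $\deg(\varphi^{*}P_{-m,\mu}^{\Lambda})<0$.

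To compute this, I would expand $P_{-m,\mu}^{\Lambda}$ via the Möbius relation~\eqref{eq: Heegner decomp} as $\sum_{r^{2}\mid m}\mu(r)\sum_{r\overline\beta=\mu}H_{-m/r^{2},\overline\beta}^{\Lambda}$ and pull back each ordinary Heegner divisor using Kudla's formula~\eqref{eq;2main eqkudla}. Since $n\ge 4$, both $\mathcal{F}_\Lambda$ and $\mathcal{F}_L$ have vanishing $H^{1}$, so the diagram~\eqref{eq;gencommdiagrpic} commutes and this pullback formula is valid already in $\Pic_{\QQ}(\mathcal{F}_L)$. Inside $\varphi^{*}H_{-m',\overline\beta}^{\Lambda}=\sum_{\alpha}\sum_{t\ge 0}\theta(m'-t,(\alpha+\beta)_K)\,H_{-t,(\alpha+\beta)_L}^{L}$, with $\alpha\in\Lambda/(L\oplus K)$ and $\beta$ a fixed lift of $\overline\beta$, I would isolate the $t=0$ terms: they occur only when $(\alpha+\beta)_L=0$ and then contribute $-\bigl(\sum_{\alpha:\,(\alpha+\beta)_L=0}\theta(m',(\alpha+\beta)_K)\bigr)\lambda_L$, because $H_{0,0}^{L}=-\lambda_L$, while the $t>0$ terms form a genuine combination of Heegner divisors on $\mathcal{F}_L$.

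Now I would take degrees with respect to $\lambda_L$. As $\lambda_L$ is a positive multiple of the hyperplane class of $\mathcal{F}_L^{BB}\subset\mathbb{P}^{N}$, one has $\deg\lambda_L>0$, and by the identity between degrees of Heegner divisors and Fourier coefficients of the normalized elliptic Eisenstein series recalled in Section~\ref{sec;strategy intro} (using~\cite{BK01}), $\deg H_{-t,\nu}^{L}=-c_{t,\nu}(E_{\frac{n+1}{2},L})\deg\lambda_L$ for every $t>0$, consistent with $\deg H_{0,0}^{L}=-\deg\lambda_L$. Substituting, $\deg(\varphi^{*}P_{-m,\mu}^{\Lambda})$ equals $-\deg(\lambda_L)$ times
\[
\sum_{r^{2}\mid m}\mu(r)\sum_{r\overline\beta=\mu}\Biggl(\sum_{\alpha:\,(\alpha+\beta)_L=0}\theta\Bigl(\tfrac{m}{r^{2}},(\alpha+\beta)_K\Bigr)+\sum_{\alpha,\,t>0}\theta\Bigl(\tfrac{m}{r^{2}}-t,(\alpha+\beta)_K\Bigr)c_{t,(\alpha+\beta)_L}(E_{\frac{n+1}{2},L})\Biggr),
\]
which is exactly the difference (right-hand side minus left-hand side) of~\eqref{sec3:eq:bound}. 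Hence~\eqref{sec3:eq:bound} is equivalent to $\deg(\varphi^{*}P_{-m,\mu}^{\Lambda})<0$, and combined with $\deg(\varphi^{*}P)\le\frac1\varepsilon\deg(\varphi^{*}P_{-m,\mu}^{\Lambda})$ it yields $\deg(\varphi^{*}P)<0$. Corollary~\ref{cor: degree condition} then gives that $P$ is extremal in $\overline{{\rm Eff}}(\mathcal{F}_\Lambda)$ and that $\overline P$ is extremal in $\overline{{\rm Eff}}(\overline{\mathcal{F}}_\Lambda)$ for every normal projective $\QQ$-factorial $\overline{\mathcal{F}}_\Lambda$ over $\mathcal{F}_\Lambda^{BB}$.

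The step I expect to be the real obstacle---everything else being assembly of results already in hand---is the precise matching of normalizations: confirming that the $t=0$ part of Kudla's pullback~\eqref{eq;2main eqkudla} is exactly $-\lambda_L$ (with no stray factor coming from $\divis_{\Lambda}(\rho)$ or from $[\Lambda:L\oplus K]$), and that the Siegel--Weil-type formula for $\deg H_{-t,\nu}^{L}$ carries precisely the Eisenstein series $E_{\frac{n+1}{2},L}$ with the sign used above, so that the bracketed sum is literally the right-minus-left side of~\eqref{sec3:eq:bound}. The bookkeeping of the multiplicity $\varepsilon\in\{1,2\}$ and of the effectivity of the $\varphi^{*}P'$ is harmless but must be spelled out, since otherwise the displayed computation controls the pullback of the full $P_{-m,\mu}^{\Lambda}$ rather than of the single component $P$.
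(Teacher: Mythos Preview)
Your proposal is correct and follows essentially the same route as the paper: reduce to Corollary~\ref{cor: degree condition}, observe that $\deg(\varphi^*P)\le\deg(\varphi^*P_{-m,\mu}^\Lambda)$ since the other components pull back to effective divisors, expand $P_{-m,\mu}^\Lambda$ via the M\"obius relation~\eqref{eq: Heegner decomp}, apply Kudla's pullback formula~\eqref{eq;2main eqkudla}, convert degrees of Heegner divisors on $\mathcal{F}_L$ into Eisenstein coefficients via~\eqref{eq: eisenstein multiple}, and separate the $t=0$ from the $t>0$ terms to obtain~\eqref{sec3:eq:bound}. The normalization worry you flag is exactly what the paper handles by invoking \cite[Theorem~I]{Kud03} for the degree formula and the vanishing $c_{0,\nu}(E_{\frac{n+1}{2},L})=0$ for $\nu\neq 0$; your tracking of the multiplicity $\varepsilon$ is slightly more explicit than the paper's one-line inequality $\deg(\varphi^*P)\le\deg(\varphi^*(P+E))$, but the content is identical.
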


\begin{proof}
By Corollary~\ref{cor: degree condition} it is enough to show that~${\rm deg}(\varphi^*P_{-m,\mu}^\Lambda)<0$, since~$\deg(\varphi^*P)\leq \deg(\varphi^*(P+E))$ for any effective divisor~$E$ in~$\mathcal{F}_\Lambda$ that does not have~$P$ among its irreducible components.
Using \eqref{eq: Heegner decomp} we have
\[{\rm deg}(\varphi^*P^\Lambda_{-m, \mu})=\sum_{\substack{r\in\ZZ_{>0} \\ r^2| m}}\mu(r)\sum_{\substack{\overline\beta\in D(\Lambda)\\r\overline\beta=\mu}}{\rm \deg}(\varphi^*H^\Lambda_{-m/r^2,\overline\beta}),
\]
where $\mu(\cdot)$ is the M\"{o}bius function.
Thus by Proposition~\ref{prop:pulbackheegner} we aim to show that 
\begin{equation}
\label{eq: pullback of heegner}
\sum_{\substack{r\in\ZZ_{>0} \\ r^2| m}}
\mu(r)
\sum_{\substack{\overline\beta\in D(\Lambda)\\r\overline\beta=\mu}}\sum_{\substack{\alpha\in\Lambda/L\oplus K\\t\in \ZZ-q((\alpha+\beta)_L)\\ t\geq 0}}
        \theta\Big(\frac{m}{r^2}-t,(\alpha+\beta)_K\Big){\rm{deg}}\big(
        H_{-t,(\alpha+\beta)_L}^L\big)<0,
\end{equation}
where~$\beta$ is a fixed preimage of~$\overline\beta$ under~$\Lambda^\vee/L\oplus K\to\Lambda^\vee/\Lambda$.

Let $\widetilde{f}_L\colon \widetilde{\mathcal{F}}_L\rightarrow \mathcal{F}_L^{ BB}$ be the map of Diagram \eqref{sec3:diag:alpha-beta} and 
consider the map 
\[
\begin{aligned}
{\rm deg}\colon & {\rm Pic}_\QQ(\mathcal{F}_L)\rightarrow \QQ,\qquad
D\mapsto \overline{D}\cdot  \big(\widetilde{f}_L^*\lambda_L\big)^{n-2}. 
\end{aligned}
\]
The Kudla--Millson theta function yields the corresponding modular form
\begin{equation}\label{eq: deg series}
    \sum_{\ell\in D(L)}
    \sum_{\substack{s\in \ZZ-q(\ell) \\ s\ge 0}}
    {\rm{deg}}(H_{-s,\ell}^L)q^s \mathfrak{e}_\ell\in M^{(n+1)/2}_{1,L}.
\end{equation}
By \cite[Theorem I]{Kud03} (see also \cite[Corollary 4.12]{Kud03}) this is a multiple of the Eisenstein series $E_{\frac{n+1}{2},L}$. Concretely
\begin{equation}
\label{eq: eisenstein multiple}
{\rm{deg}}(H^L_{-s,\ell})=-\chi\cdot c_{s,\ell}(E_{\frac{n+1}{2},L}) \;\;\;\hbox{and}\;\;\;{\rm{deg}}(H^L_{0,0})=-\chi\cdot c_{0,0}(E_{\frac{n+1}{2},L})=-\chi,
\end{equation}
where $c_{s,\ell}\in\big((M^{ (n+1)/2}_{1,L})^\circ(\QQ)\big)^\vee$ is the $(s,\ell)$-coefficient extraction functional, and $\chi={\rm{vol}}(\mathcal{F}_L)$ is a positive constant. Thus combining with \eqref{eq: pullback of heegner} and using that $\chi>0$, we have extremality of $\overline{P}$ provided that
\begin{equation}
\label{sec3:eq:ineqE}
-\sum_{\substack{r\in\ZZ_{>0} \\ r^2| m}}\mu(r)\sum_{\substack{\overline\beta\in D(\Lambda)\\r\overline\beta=\mu}}
\sum_{\substack{\alpha\in\Lambda/L\oplus K\\t\in\ZZ-q((\alpha+\beta)_L)\\ t\geq 0}}
        \theta\Big(\frac{m}{r^2}-t,(\alpha+\beta)_K\Big)c_{t,(\alpha+\beta)_L}\Big(E_{\frac{n+1}{2},L}\Big)<0.
\end{equation}

The Fourier coefficients of $E_{\frac{n+1}{2},L}$ are all negative except for the $c_{0,0}$-coefficient, see \cite{BK01}. Moreover observe that since $K$ is definite of rank one, we know 
\[
0\le \theta\Big(\frac{m}{r^2}-t,(\alpha+\beta)_K\Big)\le 2
\]
and $\theta(m/r^2-t,(\alpha+\beta)_K)=0$ when $t>m/r^2$.
Further, if $t=0$, then $(\alpha+\beta)_{L}$ is isotropic in $D(L)$, in particular $c_{0,(\alpha+\beta)_L}(E_{\frac{n+1}{n},L})=1$ when $(\alpha+\beta)_L=0$ and zero otherwise.
We may then rewrite \eqref{sec3:eq:ineqE} as~\eqref{sec3:eq:bound}.
\end{proof}

As a corollary one obtains the following weaker, yet simpler criterion for extremality:

\begin{corollary}
\label{sec3:coro:extremality3}
Under the same hypothesis as in Theorem \ref{thm: extremality2}, the divisor $P$ is extremal in $\overline{\rm{Eff}}(\mathcal{F}_{\Lambda})$ and the closure $\overline{P}$ extremal in $\overline{\rm{Eff}}(\overline{\mathcal{F}}_{\Lambda})$ if
\begin{equation}\label{eq: simpler ineq}
-\sum_{\substack{\alpha\in\Lambda/L\oplus K\\t\in \ZZ-q(\alpha_L)\\ t> 0}}
        \theta\Big(m-t,\Big(\alpha+\frac{\rho}{\mathrm{div}_\Lambda(\rho)}\Big)_K\Big)
        c_{t, \alpha_L}
        \big(E_{\frac{n+1}{2}, L}\big)<\begin{cases}2\hbox{ if }d_\mu=1,2,\\ 1\hbox{ otherwise, }\end{cases}
\end{equation}
where $d_\mu$ stands for the order of $\mu$ in $D(\Lambda)$.
\end{corollary}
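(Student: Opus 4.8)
The plan is to obtain Corollary~\ref{sec3:coro:extremality3} from Theorem~\ref{thm: extremality2} by trading the primitive Heegner divisor $P$ for the full Heegner divisor $H^\Lambda_{-m,\mu}$ and then bounding the pullback crudely. By Corollary~\ref{cor: degree condition} it is enough to prove $\deg(\varphi^{*}P)<0$. I would first reduce this to $\deg(\varphi^{*}H^\Lambda_{-m,\mu})<0$: by the decomposition~\eqref{eq: Heegner decomp}, $H^\Lambda_{-m,\mu}$ exceeds a positive integer multiple $cP$ of $P$ (with $c\in\{1,2\}$ the multiplicity) by an effective divisor $E'=H^\Lambda_{-m,\mu}-cP$ that does not have $P$ among its components, so $\varphi^{*}E'$ is again effective and of nonnegative degree, whence $\deg(\varphi^{*}H^\Lambda_{-m,\mu})\ge c\,\deg(\varphi^{*}P)$; this is the same reduction already used in the proof of Theorem~\ref{thm: extremality2}. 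Next I would expand $\deg(\varphi^{*}H^\Lambda_{-m,\mu})$ via Proposition~\ref{prop:pulbackheegner}, taking $\Omega=L\oplus K$ and the preimage $\mu=\rho/\mathrm{div}_\Lambda(\rho)$ of the discriminant class, and rewrite every $\deg(H^L_{-t,\ell})$ as $-\chi\,c_{t,\ell}(E_{\frac{n+1}{2},L})$ by~\eqref{eq: eisenstein multiple}, where $\chi=\vol(\mathcal{F}_L)>0$. After dividing by $-\chi$, the inequality $\deg(\varphi^{*}H^\Lambda_{-m,\mu})<0$ becomes the positivity of $\sum_{\alpha\in\Lambda/\Omega}\sum_{t\ge 0}\theta(m-t,(\alpha+\mu)_K)\,c_{t,(\alpha+\mu)_L}(E_{\frac{n+1}{2},L})$.

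The second step is to split off the $t=0$ contribution. Since $\rho\perp L$ we have $\mu_L=0$, hence $(\alpha+\mu)_L=\alpha_L$; and because $E_{\frac{n+1}{2},L}$ is the Eisenstein series attached to $0\in D(L)$, its coefficient $c_{0,\ell}(E_{\frac{n+1}{2},L})$ equals $1$ when $\ell=0$ and $0$ otherwise. So the $t=0$ part of the sum is $\sum_{\alpha:\,\alpha_L=0}\theta(m,(\alpha+\mu)_K)$, which is at least $\theta(m,\mu_K)$ because every summand is nonnegative and $\alpha=0$ is one of the indices. For $t>0$ the coefficients $c_{t,\alpha_L}(E_{\frac{n+1}{2},L})$ are negative by~\cite{BK01}, while $\theta(m-t,\cdot)$ vanishes once $m-t<0$; thus the $t>0$ part equals $-S$, where $S$ is exactly the left-hand side of~\eqref{eq: simpler ineq} (after substituting $(\alpha+\mu)_K=(\alpha+\rho/\mathrm{div}_\Lambda(\rho))_K$). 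Consequently $\deg(\varphi^{*}H^\Lambda_{-m,\mu})<0$ follows as soon as $S<\theta(m,\mu_K)$, so it remains to evaluate $\theta(m,\mu_K)$.

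For this last point, $K=\mathbb{Z}\rho$ with $q(\rho)=-m\,d_\mu^{2}$, where $d_\mu=\mathrm{div}_\Lambda(\rho)$ is the order of $\mu$; the coset $\mu_K+K$ consists of the vectors $(d_\mu^{-1}+n)\rho$ for $n\in\mathbb{Z}$, with $-q\big((d_\mu^{-1}+n)\rho\big)=(1+n d_\mu)^2 m$, and this equals $m$ precisely for $n=0$ and, in addition, for $n=-2/d_\mu$ when $d_\mu\mid 2$. Hence $\theta(m,\mu_K)=2$ if $d_\mu\in\{1,2\}$ and $\theta(m,\mu_K)=1$ otherwise, so $S<\theta(m,\mu_K)$ is literally~\eqref{eq: simpler ineq}; the extremality of $P$ in $\overline{\rm Eff}(\mathcal{F}_\Lambda)$ and of $\overline P$ in $\overline{\rm Eff}(\overline{\mathcal{F}}_\Lambda)$ then follow from Corollary~\ref{cor: degree condition}. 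I do not anticipate a real obstacle: the only things requiring care are keeping the directions of the estimates straight --- every discarded term (all $t>0$ summands, and all $\alpha\neq 0$ summands at $t=0$) must weaken, not strengthen, the target inequality --- and carrying out the elementary norm count in the rank-one lattice $K$, which is what produces the $d_\mu\in\{1,2\}$ versus $d_\mu\ge 3$ dichotomy on the right-hand side of~\eqref{eq: simpler ineq}.
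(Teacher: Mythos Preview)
Your proposal is correct and follows essentially the same route as the paper: reduce to $\deg(\varphi^{*}H^\Lambda_{-m,\mu})<0$, apply Proposition~\ref{prop:pulbackheegner} with the preimage $\rho/\mathrm{div}_\Lambda(\rho)$, split off the $t=0$ term, and compute $\theta(m,\mu_K)$ in the rank-one lattice $K$. The only small difference is that the paper proves the $t=0$ contribution is \emph{exactly} $\theta(m,\mu_K)$ by showing $\alpha_L=0\Rightarrow\alpha=0$ in $\Lambda/(L\oplus K)$ (using primitivity of $\rho$), whereas you only bound it from below by $\theta(m,\mu_K)$; your weaker estimate is sufficient for the direction needed, so this is a harmless shortcut.
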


\begin{proof}
Note that from \eqref{eq: Heegner decomp} we have ${\rm deg} (\varphi^* P^\Lambda_{-m, \mu})\le {\rm deg} (\varphi^* H^\Lambda_{-m, \mu})$.
Thus, to establish~\eqref{eq: pullback of heegner} it is enough to show 
\begin{equation}\label{sec3:eq:ineqEbis}
\sum_{\substack{\alpha\in\Lambda/L\oplus K\\t\in \ZZ-q((\alpha+\tilde\mu)_L)\\ t\geq 0}}
        \theta(m-t,(\alpha+\tilde\mu)_K){\rm{deg}}(
        H_{-t,(\alpha+\tilde\mu)_L}^L)<0
\end{equation}
for some preimage~$\tilde\mu$ of~$\mu$ under~$\Lambda^\vee/L\oplus K\to\Lambda^\vee/\Lambda$.
We choose~$\tilde\mu$ to be the class of~$\frac{\rho}{\mathrm{div}_\Lambda(\rho)}$ in~$\Lambda^\vee/L\oplus K$.
Since~$\tilde\mu=\big(0,\frac{\rho}{{\rm{div}}_\Lambda(\rho)}\big)\in D(L)\oplus D(K)$, we have $(\alpha+\tilde\mu)_L=\alpha_L$ for all $\alpha \in \Lambda/L\oplus K$.

Consider the subsum of \eqref{sec3:eq:ineqEbis} given by the terms with~$t=0$, namely
\[
\sum_{\substack{\alpha\in\Lambda/L\oplus K\\q((\alpha+\tilde\mu)_L)= 0}}
        \theta(m-t,(\alpha+\tilde\mu)_K){\rm{deg}}(
        H_{0,(\alpha+\tilde\mu)_L}^L).
\]
Since~$H_{0,(\alpha+\tilde\mu)_L}^L=H_{0,\alpha_L}^L$ is not zero only when~$\alpha_L=0$, the subsum above boils down to
\[
 \underbrace{{\rm{deg}}(
        H_{0,0}^L)}_{=-\vol(\mathcal{F}_L)}
        \sum_{\substack{\alpha\in\Lambda/L\oplus K\\\alpha_L=0}}
        \theta(m,(\alpha+\tilde\mu)_K).
\]
Note also that if~$\alpha\equiv (0, \alpha_K)$ mod~$L\oplus K$, then as an element of $\Lambda_\QQ$ we have $\alpha=v_L+\gamma \rho$, where $v_L\in L$ and $\gamma \in \QQ$. Hence $\alpha-v_L=\gamma \rho$. But then since $\alpha$ and $v_L$ are both in~$\Lambda$, it follows that $\gamma \rho$ must be in $\Lambda$. Since $\rho$ is primitive, $\gamma $ must be an integer and so $\alpha_K=0$. Thus if $\alpha_L=0$, we must also have $\alpha_K=0$ and hence $\alpha=0$ mod $L\oplus K$.
We may then rewrite~\eqref{sec3:eq:ineqEbis} as
\begin{equation}
\label{eq: simpler ineq2}
-\sum_{\substack{\alpha\in\Lambda/L\oplus K\\t\in \ZZ-q((\alpha+\tilde\mu)_L)\\ 0< t\leq m}}
        \theta(m-t,(\alpha+\tilde\mu)_K)c_{t, (\alpha+\tilde\mu)_L}
        \big(E_{\frac{n+1}{2}, L}\big)<
        \theta(m,\tilde\mu_K).
\end{equation}

Finally, since $\rho$ is a primitive representative for $(-m,\mu)$, one has that $\theta(m,\tilde\mu_K)=1$ if~$\tilde\mu_K\neq-\tilde\mu_K$ and $\theta(m,\tilde\mu_K)=2$ otherwise.
And the latter occurs if and only if the order of $\mu$ in $D(\Lambda)$ is one or two, since~$d_\mu=\mathrm{div}_\Lambda(\rho)$.
\end{proof}

\begin{remark}\label{sec3:rmk:d=12}
    We observed that we must always have $0\le \theta(m-t,(\alpha+\rho/\mathrm{div}_\Lambda(\rho))_K)\le 2$. Thus, the simplified Theorem \ref{thm: extremality} stated in the introduction follows immediately from Corollary \ref{sec3:coro:extremality3}. 
\end{remark}

We now use Corollary \ref{sec3:coro:extremality3}, in fact we use the simpler Theorem \ref{thm: extremality}, to prove a criterion ensuring the \emph{simultaneous} extremality of all irreducible components of~$P_{-m,\mu}$.
These bounds will also shed light on the asymptotic behavior of extremality, see Section \ref{sec3:asymptotic}.

\begin{theorem}
\label{thm: numerical bounds}
Let $\Lambda$ be an even lattice of signature $(2,n)$ with $n\ge 4$.
Let $D_\Lambda$ be the discriminant of $\Lambda$ and let $k=\frac{n+1}{2}\in \frac{1}{2}\mathbb{Z}$.
For a given $\mu\in D(\Lambda)$ having order $d_\mu$ and $m\in \mathbb{Z}-q_\Lambda(\mu)$, all irreducible components of~$P_{-m, \mu}^\Lambda$ are extremal in $\overline{{\rm{Eff}}}(\mathcal{F}_\Lambda)$ and their closures are extremal in $\overline{{\rm{Eff}}}(\overline{\mathcal{F}}_\Lambda)$ provided 
\[
\frac{1}{4md_\mu}>\begin{cases}
    \sum_{i=1}^{\lfloor 4m^2d_\mu\rfloor}\frac{(2\pi)^k\zeta(k-1)\zeta(k)}{\sqrt{2mD_\Lambda}\cdot \Gamma(k)\zeta(2k)}\big(\frac{i}{4md_\mu}\big)^{k-1}
    \prod_{p|2mD_\Lambda}(2+2{\rm ord}_p(i))&\mbox{ if } n \text{ odd,}\\
    \sum_{i=1}^{\lfloor 4m^2d_\mu\rfloor}\frac{(2\pi)^k\zeta(k-\frac{1}{2})\sigma_{2-2k}(2mD_\Lambda)\sigma_{1/2-k}(2mD_\Lambda)}{\sqrt{2mD_\Lambda}\cdot \Gamma(k)\zeta(2k-1)}\big(\frac{i}{4md_\mu}\big)^{k-1}
    \prod_{p|2mD_\Lambda}\frac{2+2{\rm ord}_p(i)}{1-p^{1-2k}} &\mbox{ if } n \text{ even.}   
\end{cases}
\]
This is always satisfied if $m^2d_\mu<\frac{1}{4}$.  
\end{theorem}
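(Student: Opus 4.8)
The plan is to derive the bound from the extremality criterion of Theorem~\ref{thm: extremality} (equivalently, of Corollary~\ref{sec3:coro:extremality3} via Remark~\ref{sec3:rmk:d=12}), applied to each irreducible component of $P_{-m,\mu}^{\Lambda}$ and estimated uniformly. Fix a component $P$; by Theorem~\ref{thm: extremality} it is the image of a finite map $\mathcal{F}_{L}\to\mathcal{F}_{\Lambda}$ with $L=\rho^{\perp\Lambda}$ for $\rho$ a primitive representative of $(-m,\mu)$, and $K:=L^{\perp}=\ZZ\rho$. It then suffices to verify
\[
-\sum_{\substack{\alpha\in\Lambda/(L\oplus K)\\ t\in\ZZ-q_{L}(\alpha_{L}),\ 0<t\le m}} c_{t,\alpha_{L}}\!\left(E_{\frac{n+1}{2},L}\right)\ <\ \tfrac12,
\]
the smaller of the two thresholds in \eqref{sec1:eq:thm_ext}, so that the conclusion holds regardless of whether $2\mu=0$. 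Since all Fourier coefficients $c_{t,\gamma}(E_{(n+1)/2,L})$ with $t>0$ are negative by \cite{BK01}, the left-hand side is a sum of nonnegative quantities and the task is a genuine upper bound, to be obtained independently of the chosen component.

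The second step is lattice bookkeeping. From $-m=q(\rho)/\mathrm{div}_{\Lambda}(\rho)^{2}$ and $d_{\mu}=\mathrm{div}_{\Lambda}(\rho)$ one gets $q(\rho)=-m d_{\mu}^{2}$, hence $|D_{K}|=2m d_{\mu}^{2}$; computing the image of $\Lambda$ in $D(K)=K^{\vee}/K$ — each $x\in\Lambda$ projects to $\tfrac{\langle x,\rho\rangle}{\langle\rho,\rho\rangle}\rho$ with $\langle x,\rho\rangle$ running through $d_{\mu}\ZZ$ — identifies $\Lambda/(L\oplus K)$ with a cyclic group of order $[\Lambda:L\oplus K]=2m d_{\mu}$, so that
\[
|D_{L}|\ =\ \frac{[\Lambda:L\oplus K]^{2}\,D_{\Lambda}}{|D_{K}|}\ =\ 2m D_{\Lambda},
\]
a quantity independent of the chosen component. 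Moreover $\alpha\mapsto\alpha_{L}$ embeds $\Lambda/(L\oplus K)$ into $D(L)$, and since the orthogonal projections satisfy $q(\alpha_{L})+q(\alpha_{K})=q(\alpha)\in\ZZ$ one reads off $q_{L}(\alpha_{L})\equiv -q(\alpha_{K})\in\tfrac1{4m}\ZZ/\ZZ$; hence every admissible $t$ satisfies $i:=4m d_{\mu}\,t\in\ZZ_{>0}$, and $0<t\le m$ confines $i$ to $\{1,\dots,\lfloor 4m^{2}d_{\mu}\rfloor\}$.

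The third step feeds in the explicit formulas of Bruinier--Kuss \cite{BK01} for $c_{t,\gamma}(E_{(n+1)/2,L})$, which come in two shapes according to whether $\mathrm{rk}\,L=n+1$ is even (integral weight $k=\tfrac{n+1}{2}$, the case $n$ odd) or odd (half-integral weight, the case $n$ even); note $k-1=\tfrac{n-1}{2}\ge\tfrac32$ precisely because $n\ge4$, so the relevant $\zeta$-values converge. In both cases the formula factors as an archimedean term $\tfrac{(2\pi)^{k}}{\Gamma(k)\sqrt{|D_{L}|}}\,t^{k-1}$, a central value (a ratio of a Dirichlet $L$-value to a $\zeta$-value in the integral case, a product of two finite divisor sums evaluated at $|D_{L}|$ in the half-integral case), and a finite product of local densities at the primes dividing $2t|D_{L}|$. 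I would bound the central $L$-value by $\zeta(k-1)\zeta(k)/\zeta(2k)$ (resp.\ the divisor-sum factor by $\sigma_{2-2k}(|D_{L}|)\sigma_{1/2-k}(|D_{L}|)\zeta(k-\tfrac12)/\zeta(2k-1)$), bound each local factor at a prime $p\mid|D_{L}|$ by $2+2\,\mathrm{ord}_{p}(i)$ (resp.\ by $(2+2\,\mathrm{ord}_{p}(i))/(1-p^{1-2k})$), and absorb the contribution of primes dividing $2t$ but not $|D_{L}|$ into the global Euler product of the $\zeta$-denominator. Since $|D_{L}|=2mD_{\Lambda}$ and $t=i/(4md_{\mu})$, this produces exactly the $i$-th summand $R_{i}$ of the right-hand side of the claimed inequality, as a bound for $|c_{t,\gamma}(E_{(n+1)/2,L})|$ that is uniform in $\gamma$ and depends only on $n,m,d_{\mu},D_{\Lambda}$.

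Finally I would assemble: grouping the double sum by the value $\gamma=\alpha_{L}$ and reindexing by $i=4md_{\mu}t$,
\[
-\sum_{\alpha,t}c_{t,\alpha_{L}}\!\left(E_{\frac{n+1}{2},L}\right)\ =\ \sum_{\gamma}\sum_{i}\bigl|c_{i/(4md_{\mu}),\gamma}\bigr|\ \le\ \sum_{i=1}^{\lfloor 4m^{2}d_{\mu}\rfloor}\#\{\gamma\}\cdot R_{i}\ \le\ 2md_{\mu}\sum_{i=1}^{\lfloor 4m^{2}d_{\mu}\rfloor}R_{i},
\]
using that at most $|\Lambda/(L\oplus K)|=2md_{\mu}$ classes $\gamma$ contribute to a fixed level $i$. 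Hence the hypothesis $\sum_{i}R_{i}<\tfrac1{4md_{\mu}}$ gives $-\sum_{\alpha,t}c_{t,\alpha_{L}}<\tfrac12$, and Theorem~\ref{thm: extremality} then yields extremality of $P$ in $\overline{{\rm Eff}}(\mathcal{F}_{\Lambda})$ and of $\overline{P}$ in $\overline{{\rm Eff}}(\overline{\mathcal{F}}_{\Lambda})$ for every normal projective $\mathbb{Q}$-factorial compactification over $\mathcal{F}_{\Lambda}^{BB}$; since the argument used nothing about the component beyond $|D_{L}|=2mD_{\Lambda}$, all components are covered simultaneously. The closing remark is immediate: if $m^{2}d_{\mu}<\tfrac14$ then $\lfloor 4m^{2}d_{\mu}\rfloor=0$, the sum $\sum_{i}R_{i}$ is empty, and $0<\tfrac1{4md_{\mu}}$ holds trivially. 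I expect the one genuinely delicate point to be the Bruinier--Kuss step: packaging the local densities and the central $L$-value into the clean closed form $R_{i}$ — deciding which primes land in the product over $p\mid|D_{L}|$ versus being swallowed by the global $\zeta$-factors, and controlling how many classes $\gamma$ sit over each level $i$ — is where all the arithmetic lies; the reduction to \eqref{sec1:eq:thm_ext} and the computation of $|D_{L}|$ are routine by comparison.
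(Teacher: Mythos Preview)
Your proposal is correct and follows essentially the same route as the paper's proof: reduce to the inequality of Theorem~\ref{thm: extremality}, compute $[\Lambda:L\oplus K]=2md_\mu$ and $|D_L|=2mD_\Lambda$, constrain $t$ to $\{i/(4md_\mu):1\le i\le\lfloor 4m^2d_\mu\rfloor\}$, bound $|c_{t,\alpha_L}(E_{(n+1)/2,L})|$ via the Bruinier--Kuss formulas together with \cite[Lemma~2.3]{BM19} and~\eqref{eq: dirichlet bound}, and sum over the $2md_\mu$ classes. One small simplification relative to what you anticipate: the finite Euler product in the Bruinier--Kuss/Bruinier--M\"oller bound already runs only over primes $p\mid 2N$ with $N$ the level of $L$, and since $2N\mid D_L=2mD_\Lambda$ one simply enlarges the index set to $p\mid 2mD_\Lambda$ (each added factor being $\ge 2$); there is no separate ``primes dividing $2t$ but not $|D_L|$'' contribution to absorb. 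The paper also handles the passage from $\mathrm{ord}_p(2d_{\alpha_L}t)$ to $\mathrm{ord}_p(i)$ by noting $d_{\alpha_L}\mid \kappa=2md_\mu$, which is the same device you would need to make your ``uniform in $\gamma$'' claim precise.
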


\begin{proof}
Let $\rho$ be a primitive representative for the pair $(-m,\mu)$. Then ${\rm div}_\Lambda(\rho)=d_\mu$ and~$-2m=\langle \rho, \rho\rangle/d_\mu^2$.
As before, we fix $L=\rho^\perp$ and $K=\mathbb{Z}\rho$. Any $\alpha\in \Lambda$ thus has orthogonal decomposition in $L^\vee\oplus K^\vee$ given by
\begin{equation}
\label{sec3:eq:alpha}
\alpha=\underbrace{\left(\alpha-\frac{\langle\alpha,\rho\rangle}{\langle\rho,\rho\rangle}\rho\right)}_{\alpha_L}+\underbrace{\left(\frac{\langle\alpha,\rho\rangle}{\langle\rho,\rho\rangle}\rho\right)  }_{\alpha_K}.
\end{equation}
Let~$\kappa=\lvert\langle\rho,\rho\rangle\rvert/d_\mu=2md_\mu$.We claim that $\Lambda\big/(L\oplus K)$ is cyclic generated by any $\alpha\in\Lambda$ mod~$ L\oplus K$ such that $d_\mu=\left|\langle \alpha,\rho\rangle\right|$. The inclusion $L\oplus K\subset \Lambda$ has index $\kappa$, see \cite[Chapter 14, Equation~0.2]{Huy16} and \cite[Lemma 7.2]{GHS13},
so it is enough to show that $\alpha$ has order $\kappa$ in $\Lambda\big/(L\oplus K)$. Indeed, if $r\alpha\in L\oplus K$ with $r\mid \kappa$, then from \eqref{sec3:eq:alpha} one notes that $r\cdot\frac{d_\mu}{\langle\rho,\rho\rangle}\rho\in \mathbb{Z}\rho$. In particular $\kappa\mid r$ and therefore $r=\kappa$.

For any $\alpha\in \Lambda /(L\oplus K)$, we know that $\kappa\alpha_L\in L$ and so since $\alpha_L\in L^\vee$, we 
have $2\kappa q_L(\alpha_L)=\langle \kappa \alpha_L, \alpha_L\rangle \in \mathbb{Z}.$
It follows that $q_L(\alpha_L)\in \frac{1}{2\kappa}\mathbb{Z}$. Thus if $t\in \mathbb{Z}-q(\alpha_L)$ and satisfies $0< t\leq m$, we know that
\begin{equation}
\label{eq: t values}
t\in \left\{\frac{i}{2\kappa}\mid  i\in \mathbb{Z},\  0<i\le 4m^2 d_\mu\right\}.
\end{equation}

We remark that this already establishes that under the additional assumption that $m^2d_\mu<\frac{1}{4}$, then the set \eqref{eq: t values} is empty and so the inequality of Theorem \ref{thm: extremality} is automatically satisfied. Thus in this case $P_{m, \mu}^\Lambda$ is extremal.

Given the restriction \eqref{eq: t values} on possible values of $t$, and since all Fourier coefficients $c_{t,\alpha_L}$ with $t>0$ are negative, we have 
\begin{equation}\label{eq: bounding summands}
-\sum_{\substack{\alpha\in \Lambda/L\oplus K\\t\in \ZZ-q(\alpha_L)\\ 0<t\leq m }}
c_{t,\alpha_L}\big(E_{\frac{n+1}{2},L}\big)\le 
-\sum_{\alpha\in\Lambda/L\oplus K}\sum_{i=1}^{\lfloor 4m^2d_\mu\rfloor}
c_{\frac{i}{2\kappa},\alpha_L}\big(E_{\frac{n+1}{2},L}\big).    
\end{equation}

For any discriminant $D\in \mathbb{Z}\backslash \{0\}$ define the Dirichlet character $\chi_{4D}=\left(\frac{D}{a}\right)$, and the divisor sum with character $\sigma_s(a,\chi)=\sum_{d|a}\chi(d)d^s$. Then one has
\begin{equation}\label{eq: dirichlet bound}
\sigma_{-s}(a,\chi)\le \zeta(s)\;\;\;\hbox{and}\;\;\; \ \frac{\zeta(2s)}{\zeta(s)}\le L(s,\chi)\le \zeta(s).
\end{equation}

Now assume that $\boldsymbol{n}$ \textbf{is odd} (so that~$n-1$ is even).
Then by~\cite[Theorem~11]{BK01} together with the bounds of \cite[Lemma 2.3]{BM19} and~\eqref{eq: dirichlet bound}, we have
\[-c_{t,\alpha_L}\big(E_{\frac{n+1}{2}, L}\big)\le C_L|t|^{k-1}\prod_{p|2N}\big(2+2{\rm ord}_p(2d_{\alpha_L}t)\big),\]
where 
\[C_L=\frac{(2\pi)^k\zeta(k-1)\zeta(k)}{\sqrt{D_L}\cdot \Gamma(k)\zeta(2k)}.\]
Here  $D_L$ is the discriminant and $N$ is the level of the lattice $L$. In particular, in light of \eqref{eq: t values}, we have
\begin{equation}
\label{eq: bounding coeffs1}
-c_{\frac{i}{2\kappa},\alpha_L}\big(E_{\frac{n+1}{2}, L}\big)\le C_L\Big(\frac{i}{2\kappa}\Big)^{k-1}
\prod_{p|2N}\bigg( 2+2{\rm ord}_p\bigg(\frac{d_{\alpha_L}}{\kappa}\cdot i\bigg)\bigg).
\end{equation}
Note that since $\kappa \alpha_L=0$, we know that $d_{\alpha_L}$ divides $\kappa$. Thus ${\rm ord}_p\left(\frac{d_{\alpha_L}}{\kappa}\cdot i\right)\le {\rm ord}_p(i)$. Moreover, we know $2N$ divides the discriminant 
$D_L$ of the lattice $L$ given by
\[D_L=\frac{D_K\cdot D_\Lambda}{d_\mu^2}=2m D_\Lambda.\]
Thus we may further bound \eqref{eq: bounding coeffs1} as
\[
-c_{\frac{i}{2\kappa},\alpha_L}\big(E_{\frac{n+1}{2}, L}\big)\le C_L\Big(\frac{i}{2\kappa}\Big)^{k-1}\prod_{p|2mD_\Lambda}\left(2+2{\rm ord}_p\left(i\right)\right).
\]
Recall that $\left|\Lambda\big/L\oplus K\right|=2md_\mu$. Theorem \ref{thm: extremality} together with \eqref{eq: bounding summands} then yield that if 
\begin{equation}
\label{sec3:eq:nodd}
\sum_{i=1}^{\lfloor 4m^2d_\mu\rfloor}C_L\Big(\frac{i}{2\kappa}\Big)^{k-1}\prod_{p|2mD_\Lambda}\left(2+2{\rm ord}_p\left(i\right)\right)<\frac{1}{4md_\mu},
\end{equation}
then all irreducible components of $P_{-m, \mu}^\Lambda$ are extremal. 

Similarly, if $\boldsymbol{n}$ \textbf{is even} (so that $n-1$ is odd) then by 
\cite[Theorem 4.6]{BK01}, \cite[Equation (10), Lemma 2.3]{BM19}, \eqref{eq: dirichlet bound}, and \eqref{eq: t values} we have
\[-c_{\frac{i}{2\kappa},\alpha_L}\big(E_{\frac{n+1}{2}, L}\big)\le C_L'\Big(\frac{i}{2\kappa}\Big)^{k-1}\prod_{p|2mD_\Lambda}\frac{2+2{\rm ord}_p(i)}{1-p^{1-2k}},\]
where 
\[C_L'=\frac{(2\pi)^k\zeta(k-\frac{1}{2})\sigma_{2-2k}(2mD_\Lambda)\sigma_{1/2-k}(2mD_\Lambda)}{\sqrt{D_L}\cdot \Gamma(k)\zeta(2k-1)}.\]
Thus, as with the $n$ odd case, combining with Theorem \ref{thm: extremality} and \eqref{eq: bounding summands} yields that if 
\begin{equation}
\label{sec3:eq:neven}
\sum_{i=1}^{\lfloor 4m^2d_\mu\rfloor}C_L'\Big(\frac{i}{2\kappa}\Big)^{k-1}\prod_{p|2mD_\Lambda}\frac{2+2{\rm ord}_p(i)}{1-p^{1-2k}}<\frac{1}{4md_\mu},
\end{equation}
then all irreducible components of $P_{-m, \mu}^\Lambda$ are extremal.
\end{proof}

\begin{remark}
Note that in both inequalities \eqref{sec3:eq:nodd}, \eqref{sec3:eq:neven}, if $d_\mu=1,2$, then $\frac{1}{4md_\mu}$ can be replaced with $\frac{1}{2md_\mu}$, see Remark \ref{sec3:rmk:d=12}.
\end{remark}

Theorem \ref{intro:thm:main_K3} and Corollary \ref{thm: numerical bounds intro} from the introduction are now an immediate consequence of Theorem \ref{thm: numerical bounds}:

\begin{proof}[Proof of Corollary \ref{thm: numerical bounds intro}]
Assume $m^2\cdot d_\mu<1/4$. From Theorem \ref{thm: numerical bounds} it follows that all components $P$ of $P_{-m,\mu}$ (and their closure) are extremal. Note that from \eqref{eq: Heegner decomp} one has that if $P$ is a component of $H_{-m,\mu}$, then $P$ is a component of $P_{-s,\delta}$ for some $s=m/r^2$ and $r\delta=\mu$. Then, the order $d_\delta$ divides $r\cdot d_\mu$. In particular
\[
s^2d_\delta\le \frac{m^2}{r^4}\cdot r d_\mu=\frac{1}{r^3}\cdot m^2d_\mu<\frac{1}{4}.
\]
Therefore, one obtains extremality for all components $P$ of $H_{-m,\mu}$ and their respective closures.
\end{proof}

\begin{proof}[Proof of Theorem \ref{intro:thm:main_K3}]
For a Noether-Lefschetz divisor $D_{h,a}$ on the moduli space of quasi-polarized K3 surfaces $\mathcal{F}_{2d}$, recall (see Example \ref{ex: NL Heegner relationship} as well as  \cite[Section~1, Lemma~3]{MP13}) that $D_{h,a}= H_{-m, \mu}$ if  $d\nmid a$ and $D_{h,a}=\frac{1}{2}H_{-m, \mu}$ if $d \mid a$, where $m=\frac{a^2}{4d}-(h-1)$ and $\mu=a\cdot \frac{\ell}{2d}$ with $\frac{\ell}{2d}$ the standard generator of $D(\Lambda_{2d})$.  

Thus, if $d,a$ satisfy the bound of Theorem \ref{intro:thm:main_K3}, then the order $d_\mu$ of $\mu=a\cdot \frac{\ell}{2d}$ is $\frac{2d}{{\rm gcd}(a, 2d)}$ 
and so we have $m^2 d_\mu< \frac{1}{4}$. The theorem follows from Corollary \ref{thm: numerical bounds intro}.
\end{proof}

\subsection{Asymptotic behavior of extremality}\label{sec3:asymptotic} In this section we discuss some applications of Theorem \ref{thm: numerical bounds} to the asymptotic behavior of extremality. The first thing to notice is that when $m,d_\mu,$ and $D_{\Lambda}$ are fixed, 
the function dominating the right-hand side of the inequality in Theorem \ref{thm: numerical bounds} as $k=n/2+1$ grows to infinity is $\Gamma(k)$. We call $f(n,D_\Lambda,m,d_\mu)/4md_\mu$ the right-hand side of the inequality in Theorem \ref{thm: numerical bounds}. Assume $n$ is odd. Then there exists a constant $C>0$ depending on $D_\Lambda$, $m$, and $d_\mu$ such that 
\[
f(n,D_\Lambda,m,d_\mu)< C\cdot \frac{(2\pi)^{k}\zeta(k-1)\zeta(k)\cdot m^{k-1}}{\Gamma(k)\zeta(2k)}.
\]
Note that the fraction $\frac{\zeta(k-1)\zeta(k)}{\zeta(2k)}$ converges to $1$ as $k$ grows to infinity. In particular, if we call $M=\max\{2\pi, m\}$, then there exists a constant $C'>0$ such that for $k\gg0$ one has
\begin{equation}
\label{sec3:eq:limk}
f(n,D_\Lambda,m,d_\mu)<C'\frac{(M^2)^k}{\Gamma(k)}\longrightarrow 0, \qquad n\longrightarrow\infty.
\end{equation}
The case when $n$ is even is analogous. As an application, one obtains the following corollary.

\begin{corollary}\label{cor: asymp rank}
Let $\Lambda$ be an even lattice of signature $(2,n)$ with $n\ge 4$.
\begin{enumerate}
    \item If $\Lambda$ is unimodular and~$m\in\ZZ_{>0}$, then the divisor~$P_{-m}^\Lambda$ is extremal in~$\overline{{\rm{Eff}}}(\mathcal{F}_\Lambda)$ and its closure $\overline{P}{}_{-m, \mu}^\Lambda$ is extremal in $\overline{{\rm{Eff}}}(\overline{\mathcal{F}}_\Lambda)$ for any sufficiently large $n$.
    \item For any $\mu\in D(\Lambda)$ and $m\in \ZZ-q(\mu)>0$, all irreducible components of~$P_{-m,\mu}$ on $\mathcal{F}_\Lambda$ are extremal in~$\overline{{\rm{Eff}}}(\mathcal{F}_{\Lambda\oplus M})$ and their closures are extremal in $\overline{{\rm{Eff}}}(\overline{\mathcal{F}}_{\Lambda\oplus M})$ for any even negative-definite lattice $M$ of sufficiently large rank.
\end{enumerate}
\end{corollary}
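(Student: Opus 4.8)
The plan is to derive both statements from Theorem~\ref{thm: numerical bounds} together with the decay estimate recorded in~\eqref{sec3:eq:limk}. Writing $k=\frac{n+1}{2}$ for the relevant half-weight, recall that~\eqref{sec3:eq:limk} says that when $m$, $d_\mu$ and the discriminant of the ambient lattice are held fixed, the right-hand side of the inequality in Theorem~\ref{thm: numerical bounds} is bounded above by a constant times $(M_0^2)^k/\Gamma(k)$, with $M_0=\max\{2\pi,m\}$, hence tends to $0$ as $k\to\infty$. So the idea is, in each case, to sit inside a family of lattices in which $m$ and $d_\mu$ do not change, the half-weight $k$ tends to infinity, and the remaining discriminant dependence is absorbed into a constant that does not grow; then the inequality of Theorem~\ref{thm: numerical bounds} holds for $k\gg 0$ and extremality follows.

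For the first statement I would argue as follows. An even unimodular lattice $\Lambda$ of signature $(2,n)$ has $D(\Lambda)=0$, so its only primitive Heegner divisors are the $P_{-m}^\Lambda=P_{-m,0}^\Lambda$ with $m\in\ZZ_{>0}$; here $\mu=0$ has order $d_\mu=1$, the discriminant $D_\Lambda$ equals $1$, and (since $\Lambda$ splits off $U^{\oplus 2}$) the divisor $P_{-m}^\Lambda$ is nonempty and irreducible. As $m$, $d_\mu=1$ and $D_\Lambda=1$ are all fixed, \eqref{sec3:eq:limk} applies verbatim: for $n$, equivalently $k=\frac{n+1}{2}$, large enough the right-hand side of the inequality in Theorem~\ref{thm: numerical bounds} drops below $\frac{1}{4m}$, and Theorem~\ref{thm: numerical bounds} then gives the extremality of $P_{-m}^\Lambda$ in $\overline{{\rm{Eff}}}(\mathcal{F}_\Lambda)$ and of its closure in $\overline{{\rm{Eff}}}(\overline{\mathcal{F}}_\Lambda)$.

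For the second statement, fix $\mu\in D(\Lambda)$ and $m\in\ZZ-q_\Lambda(\mu)$, and regard $\mu$ inside $D(\Lambda\oplus M)=D(\Lambda)\oplus D(M)$ with vanishing $M$-component; then $q_{\Lambda\oplus M}(\mu)=q_\Lambda(\mu)$, the order of $\mu$ is still $d_\mu$, and a primitive representative of $(-m,\mu)$ in $\Lambda$ stays primitive in $\Lambda\oplus M$ with the same divisibility, so $P_{-m,\mu}^{\Lambda\oplus M}$ is nonempty whenever $P_{-m,\mu}^\Lambda$ is. I would then apply Theorem~\ref{thm: numerical bounds} to $\Lambda\oplus M$, of signature $(2,n+\rk M)$, so that the half-weight $k=\frac{n+\rk M+1}{2}$ tends to infinity with $\rk M$ while $D_{\Lambda\oplus M}=D_\Lambda\cdot D_M$ varies. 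The key observation is that $m$ and $d_\mu$ are unchanged, so the number of summands $\lfloor 4m^2d_\mu\rfloor$ and each base $i/(4md_\mu)\le m$ are independent of $M$; the only $M$-dependence lies in the factor $1/\sqrt{D_{\Lambda\oplus M}}$ appearing in $C_L$ (resp.\ $C_L'$) and in the product over primes $p\mid 2mD_{\Lambda\oplus M}$. Since $0<i\le 4m^2d_\mu$ is bounded, each such product is at most $2^{\omega(2mD_\Lambda D_M)}$ times a constant depending only on $m$ and $d_\mu$, and combined with $1/\sqrt{D_M}$ one invokes the elementary estimate
\[
\frac{2^{\omega(D)}}{\sqrt D}\;\le\;\prod_{p\mid D}\frac{2}{\sqrt p}\;\le\;\frac{2\sqrt2}{\sqrt3}\;<\;2\qquad\text{for all }D\in\ZZ_{>0},
\]
together with the analogous uniform bound for the product occurring in the even-$n$ case (where the extra factors $1/(1-p^{1-2k})$ are themselves uniformly bounded over $p$ for $k\ge 2$), to conclude that the whole $M$-dependent contribution is bounded by a constant $C$ depending only on $m$, $d_\mu$ and $D_\Lambda$. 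Hence the right-hand side of the inequality in Theorem~\ref{thm: numerical bounds} for $\Lambda\oplus M$ is at most $C\cdot(M_0^2)^k/\Gamma(k)$ \emph{uniformly in $M$}, with $k=\frac{n+\rk M+1}{2}$, which is $<\frac{1}{4md_\mu}$ once $\rk M$ exceeds a threshold depending only on $n$, $m$, $d_\mu$ and $D_\Lambda$; Theorem~\ref{thm: numerical bounds} then gives the extremality of all irreducible components of $P_{-m,\mu}^{\Lambda\oplus M}$ and of their closures.

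The only genuine obstacle is this uniformity in the second statement: because $D_{\Lambda\oplus M}$ is unbounded over negative-definite lattices $M$ of a fixed rank, one cannot simply quote the fixed-parameter asymptotic~\eqref{sec3:eq:limk}, and one must verify that the discriminant enters the estimate only through $1/\sqrt{D_{\Lambda\oplus M}}$ and through products over its prime divisors with arguments that stay bounded — so that, since $2^{\omega(D)}/\sqrt D$ is universally bounded, the $\Gamma(k)^{-1}$ decay still dominates uniformly in $M$. Everything else is a direct application of Theorem~\ref{thm: numerical bounds}.
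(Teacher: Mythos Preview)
Your proposal is correct and follows the same approach as the paper: both derive the corollary from Theorem~\ref{thm: numerical bounds} via the decay estimate~\eqref{sec3:eq:limk}. Your treatment of part~(2) is in fact more careful than the paper's one-line proof, since you explicitly verify the uniformity in~$M$ (noting that the discriminant-dependent factors are controlled by $2^{\omega(D_M)}/\sqrt{D_M}$, which is universally bounded), whereas the paper's brief reference to~\eqref{sec3:eq:limk} leaves this point implicit.
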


\begin{proof}
This follows from \eqref{sec3:eq:limk} together with Theorem \ref{thm: numerical bounds}.
\end{proof}

Now we consider the case that $n,m$ and $d_\mu$ are fixed, and the discriminant $D_\Lambda$ grows. First, we treat the case when $\boldsymbol{n}$ \textbf{is odd}. Then there exists a constant $C>0$ depending on $n,m,d_\mu$ such that 
\begin{equation}
\label{sec3:eq:asym_D}
f(n,D_\Lambda,m,d_\mu)<C\cdot \frac{1}{D_{\Lambda}^{1/2}}\cdot \sum_{i=1}^{\lfloor4m^2d_\mu\rfloor}\prod_{p\mid 2mD_{\Lambda}}(2+2{\rm{ord}}_p(i)).
\end{equation}
Note that if $p_0^N$ is the largest power of a prime number in $\{1,\ldots, \lfloor4m^2d_\mu\rfloor\}$, then $N$ depends only on $m$ and $d_\mu$, and for any prime $p$ one has ${\rm{ord}}_p(i)\leq N$. In particular if~$\omega(r)$ denotes the number of prime factors of $r$, then \eqref{sec3:eq:asym_D} may be bounded as
\begin{equation}
\label{sec3:eq:asym_D2}
f(n,D_\Lambda,m,d_\mu)<C\cdot \frac{\lfloor4m^2d_\mu\rfloor}{D_{\Lambda}^{1/2}}\cdot (2+2N)^{\omega(2mD_{\Lambda})}.
\end{equation}
Recall that if $d(r)$ is the divisor function counting the number of divisors of $r$, then for any $\varepsilon>0$ one has~$2^{\omega(r)}\leq d(r)=o(r^\varepsilon)$.
In particular, since $m$ is fixed 
\[
(2+2N)^{\omega(2mD_{\Lambda})}\leq (d(2mD_{\Lambda}))^{\log_2(2+2N)}=o((2mD_{\Lambda})^\varepsilon)=o(D_{\Lambda}^\varepsilon)
\]
and so \eqref{sec3:eq:asym_D2} goes to zero as $D_{\Lambda}$ grows.

Now we consider the case when~$\boldsymbol{n}$ \textbf{is even}.
Since~$1/2<1-p^{1-2k}$, then
\[
\frac{2+2{\rm{ord}}_p(i)}{1-p^{1-2k}}\leq 4+4{\rm{ord}}_p(i).
\]
A similar argument as for~$n$ odd shows that for $n$, $m$ and $d_\mu$ fixed $f(n,D_\Lambda,m,d_\mu)$ is also in $o\big(D_{\Lambda}^{\varepsilon-1/2}\big)$ for any $\varepsilon>0$. In particular, it goes to zero as $D_\Lambda$ grows.

As an example of this phenomenon, in the case of moduli spaces of K3 surfaces, one obtains the following result.

\begin{corollary}\label{cor: K3 asymp}
Let $m\in \mathbb{Q}_{> 0}$ and $d_\mu\in \mathbb{Z}_{> 0}$ fixed. If $d$ is large enough, then for all~$0\leq a<2d$ with $2d/\gcd(a,2d)=d_{\mu}$ the divisor $P_{-m,a\ell_*}$ is either empty or extremal in~$\overline{{\rm{Eff}}}(\mathcal{F}_{2d})$ and its closure extremal in~$\overline{{\rm{Eff}}}(\overline{\mathcal{F}}_{2d})$.
\end{corollary}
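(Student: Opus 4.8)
The plan is to deduce the statement directly from Theorem~\ref{thm: numerical bounds}, combined with the growing-discriminant asymptotics for $f(n,D_\Lambda,m,d_\mu)$ recorded in the paragraphs above, in exact analogy with the way Corollary~\ref{cor: asymp rank} is obtained from the growing-rank asymptotics.

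First I would unwind the lattice data. One has $\mathcal{F}_{2d}=\mathcal{F}_{\Lambda_{2d}}$ with $\Lambda_{2d}=U^{\oplus 2}\oplus E_8(-1)^{\oplus 2}\oplus\ZZ\ell$, $q(\ell)=-d$; this lattice has signature $(2,19)$, so $n=19$ is \emph{fixed} and odd, its discriminant is $D_{\Lambda_{2d}}=2d$, and $D(\Lambda_{2d})\cong\ZZ/2d\ZZ$ is generated by $\ell_*=\ell/2d$. For $0\le a<2d$ the class $\mu=a\ell_*$ has order exactly $2d/\gcd(a,2d)=d_\mu$, which is the hypothesis on $a$. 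If $m\notin\ZZ-q_{\Lambda_{2d}}(a\ell_*)=\ZZ+a^2/(4d)$, or if $(-m,a\ell_*)$ admits no primitive representative, then $P_{-m,a\ell_*}$ is undefined, respectively empty, by the criterion recalled before Theorem~\ref{thm: extremality2}, and there is nothing to prove; so I may assume $P_{-m,a\ell_*}$ is a well-defined nonempty divisor, in which case Theorem~\ref{thm: numerical bounds} applies to the pair $(-m,\mu)$ and asserts that all irreducible components of $P_{-m,a\ell_*}^{\Lambda_{2d}}$ --- and their closures in any normal projective $\QQ$-factorial $\overline{\mathcal{F}}_{2d}\to\mathcal{F}_{2d}^{BB}$ --- are extremal, provided the numerical inequality there holds.

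It then remains to check that this inequality holds for all large $d$, uniformly in $a$. Its left-hand side is the fixed positive number $\tfrac{1}{4md_\mu}$, and its right-hand side is, up to that same factor, the quantity $f(n,D_\Lambda,m,d_\mu)$ introduced before Corollary~\ref{cor: asymp rank}; with $n=19$ and $D_{\Lambda_{2d}}=2d$ this quantity depends only on $d$ once $m$ and $d_\mu$ are fixed --- in particular it is \emph{independent of $a$}. The asymptotic estimate established above for $n$ odd (with $n,m,d_\mu$ fixed and the discriminant growing) gives $f(19,2d,m,d_\mu)=o\big((2d)^{\varepsilon-1/2}\big)$ for every $\varepsilon>0$, so $f(19,2d,m,d_\mu)\to 0$ as $d\to\infty$; hence there is $d_0=d_0(m,d_\mu)$ such that for all $d\ge d_0$ the inequality of Theorem~\ref{thm: numerical bounds} is satisfied for every admissible $a$, which yields the corollary. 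There is no genuine obstacle here --- the argument is entirely a matter of bookkeeping --- the one point deserving a word being that $d_0$ can be chosen uniformly in $a$, which is immediate since $f$ does not involve $a$.
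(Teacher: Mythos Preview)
Your proof is correct and follows exactly the paper's approach: specialize the growing-discriminant asymptotics for $f(n,D_\Lambda,m,d_\mu)$ to $n=19$, $D_{\Lambda}=2d$ and invoke Theorem~\ref{thm: numerical bounds}. Your write-up is in fact more detailed than the paper's --- you spell out the lattice data, handle the empty case, and make explicit the uniformity in $a$ (which follows since $f$ does not see $a$), all of which the paper leaves implicit in its two-sentence proof.
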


\begin{proof}
In this case one has discriminant $D_{\Lambda}=2d$. Since $(2+2N)^{\omega(4dm)}$ is $o(d^\varepsilon)$ for all~$\varepsilon>0$, the right-hand side of \eqref{sec3:eq:asym_D2} is in $o(d^{\varepsilon-1/2})$, in particular it goes to zero as $d$ grows. The corollary then follows from Theorem \ref{thm: numerical bounds}.  
\end{proof}

\begin{remark}\label{rem: k3 asymp}
Observe that for fixed  $m\in \mathbb{Q}_{> 0}$ and $d_\mu\in \mathbb{Z}_{> 0}$ the primitive Heegner divisor $P_{-m, a\ell_*}$ on $\mathcal{F}_{2d}$ is nonempty if and only if  there is an $n\in \ZZ$ such that $-\frac{a^2}{4d}+n=m$. This occurs if and only if $-\frac{a^2}{{\rm gcd}(a,2d)}+2nd_\mu=2md_{\mu}$. Letting $\kappa=2md_{\mu}$ as in the proof of Theorems \ref{thm: extremality2} and \ref{thm: numerical bounds} above, we then have that  $P_{-m, a\ell_*}$ is nonempty if and only if~$-\kappa \cdot {\rm gcd}(a,2d)$ is a square modulo $2d_\mu$.
In particular, for fixed $m\in \mathbb{Q}_{> 0}$ and $d_\mu\in \mathbb{Z}_{> 0}$ there are infinitely many choices of values ${\rm gcd}(a,2d)$ and hence infinitely many choices of $2d=d_{\mu}{\rm gcd}(a,2d)$ for which there exists a class $a \in \ZZ/2d\ZZ$ such that $P_{-m, a\ell_*}$ is nonempty on $\mathcal{F}_{2d}$. Corollary \ref{cor: K3 asymp} implies that there is a fixed bound $d_0$ such that for all (infinitely many) $d\ge d_0$ such that  $P_{-m, a_d\ell_*}$ is nonempty on $\mathcal{F}_{2d}$ (where here we use the notation $a_d$ to denote the fact that this value depends on the choice of $d$), the divisor $P_{-m, a_d\ell_*}$
    is  extremal in~$\overline{{\rm{Eff}}}(\mathcal{F}_{2d})$ and its closure extremal in~$\overline{{\rm{Eff}}}(\overline{\mathcal{F}}_{2d})$.
\end{remark}

\subsection{Extremality via pullbacks of Siegel Eisenstein series}\label{sec;extr pullback Siegel}
In this section, we work under the additional assumption that~$\Lambda$ splits off \textbf{two hyperbolic planes}, so that~$P_{-m,\mu}$ is irreducible.
We illustrate here another approach to checking the extremality of the ray generated by~$P_{-m,\mu}$, this time by computing~$\vol(P_{-m,\mu}^2)$.

We denote by $\omega$ the K\"{a}hler form given by $c_1(\lambda)\in H^{1,1}(\mathcal{F}_{\Lambda}, \mathbb{R})$.
By~\cite[Theorem~3.2]{Mum77} both $\omega$ and the Hodge bundle $\lambda$ extend to any toroidal compactification $\mathcal{F}_{\Lambda}^{\rm tor}$, making the latter of finite volume.
One defines for any cycle $Y\in H_{2k}\left(\mathcal{F}_{\Lambda},\mathbb{Q}\right)$ 
\begin{equation}
\label{sec3:eq:vol_def}
\vol(Y)=\int_{Y}\omega^{n-k}=\int_{\overline{Y}}\overline{\omega}^{n-k}.
\end{equation}

We illustrate here a formula for~$\vol(P_{-m,\mu}^2)$.
The proof is based on the pullback formula of the Siegel Eisenstein series~$E^k_{2,\Lambda}$ with respect to the diagonal embedding~$\HH\times\HH\to\HH_2$, $(\tau_1,\tau_2)\mapsto\big(\begin{smallmatrix}
    \tau_1 & 0\\
    0 & \tau_2
\end{smallmatrix}\big)$.
Formulas for diagonal restrictions of Siegel Eisenstein series are nowadays classical.
They were proved by Garret~\cite{garrett} and Böcherer~\cite{boecherer}, and led to the so-called doubling method.

The reason  to consider the volume of the self-intersection of primitive Heegner divisors is the following.
\begin{lemma}\label{lemma:fromdegtovol}
    Let $\Lambda$ be an even lattice of signature $(2,n)$ with $n\ge 4$ splitting off two copies of the hyperbolic plane.
    Let~$\mu\in D(\Lambda)$ and let~$m\in\ZZ-q(\mu)>0$.
    If $\vol(P^2_{-m,\mu})<0$, then~$P_{-m,\mu}$ generates an extremal ray in the pseudoeffective cone~$\overline{{\rm Eff}}(\mathcal{F}_\Lambda)$ and its closure~$\overline{P}_{-m,\mu}$ an extremal ray in~$\overline{{\rm Eff}}(\overline{\mathcal{F}}_\Lambda)$.
\end{lemma}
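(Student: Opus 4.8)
The plan is to deduce the statement from Corollary~\ref{cor: degree condition}, once I know that the two quantities $\vol(P_{-m,\mu}^2)$ and $\deg(\varphi^*P_{-m,\mu})$ appearing there are positive multiples of one another. First I would place myself in the setup of Corollary~\ref{cor: degree condition}: pick a primitive representative $\rho$ of $(-m,\mu)$, set $K=\ZZ\rho$ and $L=\rho^{\perp\Lambda}$, and let $\varphi\colon\mathcal{F}_L\to\mathcal{F}_\Lambda$ be the induced map. Since $\Lambda$ splits off two copies of $U$, the pair $(m,\mu)$ determines a single $\widetilde{\mathrm{O}}^+(\Lambda)$-orbit of primitive vectors, so $P_{-m,\mu}$ is irreducible with support exactly the image $\varphi(\mathcal{F}_L)$; in particular $\varphi$ is generically finite onto $P_{-m,\mu}$, and $\varphi_*(1_{\mathcal{F}_L})=c\,[P_{-m,\mu}]$ in $H^2(\mathcal{F}_\Lambda,\CC)$ for an explicit positive rational number $c$, namely the generic degree of $\varphi$ onto its image divided by the multiplicity ($1$ or $2$, according as $\mu\neq-\mu$ or $\mu=-\mu$) of $P_{-m,\mu}$.

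Next I would compare the relevant intersection numbers on compactifications, using Diagram~\eqref{sec3:diag:alpha-beta} and the proper map $\widetilde{\varphi}\colon\widetilde{\mathcal{F}_L}\to\mathcal{F}_\Lambda^{\rm tor}$. Two elementary inputs enter. First, the Hodge bundle restricts to the Hodge bundle, $\varphi^*\lambda_\Lambda=\lambda_L$, because $\lambda$ is $\mathcal{O}(-1)$ pulled back from the ambient projective space; combined with the commutativity of Diagram~\eqref{sec3:diag:alpha-beta} this yields $\widetilde{f}_L^*\lambda_L=\widetilde{\varphi}^*(f_\Lambda^*\lambda_\Lambda)$, which after the extensions of Mumford~\cite{Mum77} is $\widetilde{\varphi}^*\overline\omega$ for $\overline\omega$ the extended Hodge class of~\eqref{sec3:eq:vol_def}. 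Second, exactly as in the proof of Corollary~\ref{cor: degree condition}, the difference between $\widetilde{\varphi}^*\overline{P}_{-m,\mu}$ and the closure $\overline{\varphi^*P_{-m,\mu}}$ in $\widetilde{\mathcal{F}_L}$ is supported on the boundary, which $\widetilde{f}_L$ maps into the at most one-dimensional boundary of $\mathcal{F}_L^{BB}$; since $n\ge 4$ this correction contributes nothing to intersection numbers against $(\widetilde{f}_L^*\lambda_L)^{n-2}$. Putting these together with the projection formula for $\widetilde{\varphi}$ and the identity $\widetilde{\varphi}_*(1)=c\,[\overline{P}_{-m,\mu}]$, one gets, up to a fixed positive constant,
\[
\deg(\varphi^*P_{-m,\mu})
\;=\;\widetilde{\varphi}^*\overline{P}_{-m,\mu}\cdot\bigl(\widetilde{\varphi}^*\overline\omega\bigr)^{n-2}
\;=\;c\,\bigl(\overline{P}_{-m,\mu}^2\cdot\overline\omega^{\,n-2}\bigr)
\;=\;c\cdot\vol\bigl(P_{-m,\mu}^2\bigr).
\]
Hence $\deg(\varphi^*P_{-m,\mu})$ is a positive multiple of $\vol(P_{-m,\mu}^2)$, so $\vol(P_{-m,\mu}^2)<0$ forces $\deg(\varphi^*P_{-m,\mu})<0$, and Corollary~\ref{cor: degree condition} then gives that $P_{-m,\mu}$ spans an extremal ray of $\overline{\rm{Eff}}(\mathcal{F}_\Lambda)$ and $\overline{P}_{-m,\mu}$ an extremal ray of $\overline{\rm{Eff}}(\overline{\mathcal{F}}_\Lambda)$ for every normal projective $\QQ$-factorial $\overline{\mathcal{F}}_\Lambda$ over $\mathcal{F}_\Lambda^{BB}$.

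The one genuinely delicate point is the bookkeeping in the displayed chain of equalities: one must keep precise track of which $\QQ$-line bundles on $\widetilde{\mathcal{F}_L}$, $\mathcal{F}_\Lambda^{\rm tor}$, and the two Baily--Borel models are being intersected, and verify that every boundary discrepancy---above all the difference between $\widetilde{\varphi}^*\overline{P}_{-m,\mu}$ and $\overline{\varphi^*P_{-m,\mu}}$, together with any discrepancy between $\overline\omega$ and the pullback of the Baily--Borel ample class---is killed upon intersecting with $\overline\omega^{\,n-2}$; this is where the hypothesis $n\ge 4$ is really used. The projection formula itself and the computation of the constant $c$ are routine. An alternative that sidesteps the compactifications is to prove the same proportionality directly on $\mathcal{F}_\Lambda$ and $\mathcal{F}_L$: by Mumford's theorem the Hodge form and the Kudla--Millson current representing $[P_{-m,\mu}]$ have controlled growth at the boundary, so $\int_{\mathcal{F}_L}\varphi^*\bigl([P_{-m,\mu}]\wedge\omega^{n-2}\bigr)=c\int_{\mathcal{F}_\Lambda}[P_{-m,\mu}]\wedge[P_{-m,\mu}]\wedge\omega^{n-2}=c\cdot\vol(P_{-m,\mu}^2)$ by the projection formula, and the left-hand side equals $\deg(\varphi^*P_{-m,\mu})$ up to a positive constant.
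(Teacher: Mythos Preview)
Your proof is correct and follows essentially the same route as the paper: reduce to Corollary~\ref{cor: degree condition} by showing $\deg(\varphi^*P_{-m,\mu})$ is a positive multiple of $\vol(P_{-m,\mu}^2)$, using the key compatibility $\widetilde{f}_L^*\lambda_L=\widetilde{\varphi}^*f_\Lambda^*\lambda_\Lambda$ from Diagram~\eqref{sec3:diag:alpha-beta} together with the projection formula and $\widetilde{\varphi}_*[\widetilde{\mathcal{F}}_L]=\deg(\varphi)\cdot\overline{P}_{-m,\mu}$. The paper's write-up is slightly terser (it records the constant simply as $r^{n-2}\deg(\varphi)$ and does not spell out the boundary-discrepancy discussion or your alternative analytic argument), but the substance is the same.
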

\begin{proof}
Let~$\rho$ be a primitive representative of~$(-m,\mu)$ and, $L=\rho^\perp\subset\Lambda$, and consider the corresponding map~$\varphi\colon\mathcal{F}_L\to\mathcal{F}_\Lambda$. We keep the same notation as in the proof of Corollary \ref{cor: degree condition}.
As in such proof, we can find a representative $C$ of the curve class $[C]=\big(\widetilde{f}_L^*r\lambda_L\big)^{n-2}\in {\rm{N}}_1\big(\widetilde{\mathcal{F}_{L}}\big)$ which does not meet the boundary of $\widetilde{\mathcal{F}_{L}}$, where $r\lambda_L\subset \mathcal{F}_L$ is the hyperplane class coming from $\mathcal{F}_L^{BB}\subset\mathbb{P}^N$. Note moreover that from Diagram~\eqref{sec3:diag:alpha-beta}, we have~$\widetilde{f}_L^*\lambda_L=\widetilde{\varphi}^*f_\Lambda^*\lambda_\Lambda$.  Thus, using the projection formula as in the proof of Corollary~\ref{cor: degree condition}, for any divisor $D\in {\rm{Pic}}_\mathbb{Q}(\mathcal{F}_{\Lambda})$ we have
\[
{\rm{deg}}(\varphi^*D)=[C]\cdot \widetilde{\varphi}^*\overline{D}
=
\widetilde{\varphi}_*\big(\widetilde{\varphi}^*\left(f^*_\Lambda r\lambda_\Lambda\right)^{n-2}\cdot \widetilde{\varphi}^*\overline{D}\big)
=
r^{n-2}\widetilde{\varphi}_*\big[\widetilde{\mathcal{F}}_L\big]\cdot \left(f^*_\Lambda\lambda_\Lambda\right)^{n-2}\cdot\overline{D}.
\]
When $D=P_{-m,\mu}$, one has $\widetilde{\varphi}_*\big[\widetilde{\mathcal{F}}_L\big]={\rm{deg}}(\varphi)\cdot \overline{P}_{-m,\mu}$. Hence we deduce
\begin{align*}
{\rm{deg}}(\varphi^*P_{-m,\mu})&=\left(r^{n-2}{\rm{deg}}(\varphi)\right)\overline{P}_{-m,\mu}^2\cdot (f_\Lambda^*\lambda_\Lambda)^{n-2}\\
&=\left(r^{n-2}{\rm{deg}}(\varphi)\right)\int_{\overline{P}_{-m,\mu}^2}c_1(f_\Lambda^*\lambda_\Lambda)^{n-2}\\
&=\left(r^{n-2}{\rm{deg}}(\varphi)\right)\int_{P_{-m,\mu}^2}\omega^{n-2}\\
&=\left(r^{n-2}{\rm{deg}}(\varphi)\right)\cdot{\rm{vol}}\big(P_{-m,\mu}^2\big).
\end{align*}
The result then follows from Corollary \ref{cor: degree condition}.
\end{proof}

In light of Lemma \ref{lemma:fromdegtovol}, we now illustrate a formula to compute~$\vol(P^2_{-m,\mu})$.
 Let $k\coloneqq\rk\Lambda/2$, and denote by~$\mathcal{P}^\Lambda_{m,\mu}$ the weight~$k$ Poincaré series of genus~$1$ for~$\weil{\Lambda}$ of indices~$m\in\ZZ-q(\mu)$ and~$\mu\in D(\Lambda)$.
These are constructed similarly as for~$E^k_{1,\Lambda}$, where in the defining series of the latter one replaces~$\mathfrak{e}_0$ with~$e(m\gamma\cdot\tau)\mathfrak{e}_\mu$, see~\cite[Section~1.2]{bruinier-habilitation} for details.
Here~$e(t)\coloneqq e^{2\pi i t}$ as in Section~\ref{sec;vv Siegel mod}.

\begin{proposition}\label{prop:volprHeegner Siegel}
    Let~$\mu\in D(\Lambda)$ and let~$m\in\ZZ-q(\mu)$ be positive.
    Then
    \begin{align*}
        \frac{\vol(P^2_{-m,\mu})}{\vol(\mathcal{F}_\Lambda)}
        &=
        g\big(m,\mu,E^k_{1,\Lambda}\big)^2 + 2\sum_{\substack{r\in\ZZ_{>0} \\ r^2|m}}\mu(r)
        \sum_{\substack{\alpha\in D(\Lambda) \\ r\alpha=\mu}}
        \sum_{\substack{w_1\in\ZZ_{>0} \\ w_1^2|(m/r^2)}}
        \sum_{\substack{\beta\in D(\Lambda) \\ w_1\beta=\alpha}}
        c_{m/r^2w_1^2,\beta}(E^k_{1,\Lambda})
        \\
        &\quad\times
        \sum_{\substack{w_2\in\ZZ_{>0} \\ \gcd(w_1,w_2)=1}}
        g\big(m,\mu,\mathcal{P}^\Lambda_{mw_2^2/r^2w_1^2,w_2\beta}\big)
    \end{align*}
    where~$\mu(\cdot)$ is the Möbius function and~$g$ is the auxiliary function defined as
    \[
    g(m,\mu,f)=\sum_{\substack{r\in\ZZ_{>0} \\ r^2|m}}\mu(r)
    \sum_{\substack{\alpha\in D(\Lambda) \\ r\alpha=\mu}}
    c_{m/r^2,\alpha}(f)
    \qquad
    \text{for all~$f\in M^k_{1,\Lambda}$.}
    \]
\end{proposition}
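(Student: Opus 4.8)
The plan is to express $\vol(P^2_{-m,\mu})$ as a Fourier coefficient of the diagonal restriction of a genus-$2$ vector-valued Siegel Eisenstein series, and then to insert the Garrett--Böcherer pullback (doubling) formula.

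\emph{Step 1: from intersection volumes to the genus-$2$ Eisenstein series.} By Borcherds \cite{borcherds-GKZ}, the class $[\KMtheta{1}^\Lambda](\tau)$ is the generating series $\sum_{\delta}\sum_{a\ge 0}[H^\Lambda_{-a,\delta}]\,q^a\mathfrak{e}_\delta$, so the cup product $[H^\Lambda_{-a,\delta}]\wedge[H^\Lambda_{-b,\epsilon}]$ is the $(a,\delta;b,\epsilon)$-th Fourier coefficient of $[\KMtheta{1}^\Lambda](\tau_1)\wedge[\KMtheta{1}^\Lambda](\tau_2)$ on $\HH\times\HH$. Integrating against $\omega^{n-2}$, using Kudla's identity $\KMtheta{1}^\Lambda(\tau_1)\wedge\KMtheta{1}^\Lambda(\tau_2)=\KMtheta{2}^\Lambda\big(\begin{smallmatrix}\tau_1&0\\0&\tau_2\end{smallmatrix}\big)$ from \cite{kudla-algcycles} and the genus-$2$ identity $\int_{\mathcal{F}_\Lambda}\KMtheta{2}^\Lambda\wedge\omega^{n-2}=\vol(\mathcal{F}_\Lambda)\,E^k_{2,\Lambda}$, one obtains
\[
\int_{\mathcal{F}_\Lambda}[\KMtheta{1}^\Lambda](\tau_1)\wedge[\KMtheta{1}^\Lambda](\tau_2)\wedge\omega^{n-2}=\vol(\mathcal{F}_\Lambda)\cdot E^k_{2,\Lambda}\Big(\begin{smallmatrix}\tau_1&0\\0&\tau_2\end{smallmatrix}\Big),
\]
so $\vol(H_{-a,\delta}\cdot H_{-b,\epsilon})$ is $\vol(\mathcal{F}_\Lambda)$ times the $(a,\delta;b,\epsilon)$-th Fourier coefficient of the diagonal restriction $E^k_{2,\Lambda}|_{\HH\times\HH}$. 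Combining this with the Möbius relation $P_{-m,\mu}=\sum_{r^2\mid m}\mu(r)\sum_{r\delta=\mu}H_{-m/r^2,\delta}$ of \eqref{eq: Heegner decomp} and expanding $[P_{-m,\mu}]\wedge[P_{-m,\mu}]$, one sees that $\vol(P^2_{-m,\mu})/\vol(\mathcal{F}_\Lambda)$ is the result of applying $g(m,\mu,-)$ to $E^k_{2,\Lambda}|_{\HH\times\HH}$ in each of the two elliptic variables separately; here one uses linearity of $g(m,\mu,-)$ and the fact that $g(m,\mu,[\KMtheta{1}])=[P_{-m,\mu}]$, again by \eqref{eq: Heegner decomp}.

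\emph{Step 2: the pullback formula.} I would then invoke the vector-valued Garrett--Böcherer doubling formula of \cite[Section~2.9]{bruinierzuffetti}, which writes $E^k_{2,\Lambda}\big(\begin{smallmatrix}\tau_1&0\\0&\tau_2\end{smallmatrix}\big)$ as $E^k_{1,\Lambda}(\tau_1)\otimes E^k_{1,\Lambda}(\tau_2)$ plus a correction lying in $S_k\otimes S_k$. Cuspidality in each variable follows by applying the Siegel $\Phi$-operator in $\tau_1$ and in $\tau_2$ and using $\Phi(E^k_{2,\Lambda})=E^k_{1,\Lambda}$ together with $c_{0,0}(E^k_{1,\Lambda})=1$. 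Via the explicit formulas for the genus-$2$ Eisenstein Fourier coefficients, this correction is expressed through Fourier coefficients of $E^k_{1,\Lambda}$ and of the genus-$1$ Poincaré series $\mathcal{P}^\Lambda_{\cdot,\cdot}$. Concretely, extracting the $(a,\delta;b,\epsilon)$-th Fourier coefficient of $E^k_{2,\Lambda}|_{\HH\times\HH}$ means summing the genus-$2$ coefficients $c_{T,(\delta,\epsilon)}(E^k_{2,\Lambda})$ over all $T=\big(\begin{smallmatrix}a&r/2\\r/2&b\end{smallmatrix}\big)\ge 0$: the degenerate $T$ (with $r^2=4ab$) give the $E^k_{1,\Lambda}\otimes E^k_{1,\Lambda}$ term, and the positive-definite $T$, organized by content, assemble into the Poincaré-series part.

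\emph{Step 3: assembling the formula and expected obstacle.} Feeding Step~2 into the construction of Step~1, the $E^k_{1,\Lambda}\otimes E^k_{1,\Lambda}$ part gives $g(m,\mu,E^k_{1,\Lambda})^2$ directly from the definition of $g$, and the symmetric cuspidal correction gives the second displayed term --- the factor $2$ coming from the $\tau_1\leftrightarrow\tau_2$ symmetrization at the diagonal index $(m,\mu;m,\mu)$, and the nested sums over $w_1,w_2$ with $\gcd(w_1,w_2)=1$ from reindexing the content decomposition through the two Möbius transforms. I expect the genuine work to lie in Step~2 and in the bookkeeping of Step~3: one must pin down the precise shape of the vector-valued pullback --- which Poincaré series $\mathcal{P}^\Lambda_{\cdot,\cdot}$ occur, with which indices $mw_2^2/r^2w_1^2$ and $w_2\beta$, and weighted by which coefficients $c_{m/r^2w_1^2,\beta}(E^k_{1,\Lambda})$ --- and then reorganize the resulting multiple sums so that the double Möbius transform collapses to the stated closed form. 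Steps~1 and~3 are otherwise formal once the Kudla--Millson identities and \eqref{eq: Heegner decomp} are available.
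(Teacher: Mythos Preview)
Your proposal is correct and follows essentially the same route as the paper: reduce $\vol(P_{-m,\mu}^2)$ via~\eqref{eq: Heegner decomp} to Fourier coefficients of the diagonal restriction of~$E^k_{2,\Lambda}$ using Kudla's wedge identity and the Siegel--Weil-type identity $\int_{\mathcal{F}_\Lambda}\KMtheta{2}\wedge\omega^{n-2}=\vol(\mathcal{F}_\Lambda)E^k_{2,\Lambda}$, then insert the vector-valued Garrett--B\"ocherer pullback. The only refinement is that the paper does not derive the pullback as you sketch in Step~2 but simply quotes the explicit formula from \cite[Satz~12]{boecherer} and \cite[Theorem~2.20]{bruinierzuffetti}, in which the factor~$2$ and the sums over~$w_1,w_2$ with $\gcd(w_1,w_2)=1$ already appear; the remaining bookkeeping is then declared ``easy''.
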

\begin{proof}
    By~\eqref{eq: Heegner decomp}, we deduce that
    \begin{equation}\label{eq;vol self in proof}
    \vol(P_{-m,\mu}^2)
    =
    \sum_{\substack{r_1,r_2\in\ZZ_{>0} \\ r_i^2|m}}
    \mu(r_1)\mu(r_2)
    \sum_{\substack{\discelone,\disceltwo\in D(\Lambda) \\ r_i \discelgen=\mu}}
    \vol\big(H_{-m/r_1^2,\discelone}\cdot H_{-m/r_2^2,\disceltwo}\big).
    \end{equation}
    We now prove a formula for the volume of the intersection of two Heegner divisors.
    
    The class~$[H_{-{m_1},\discel_1}]\smile[H_{-m_2,\discel_2}]$ is the Fourier coefficient of index~$((m_1,m_2),(\discel_1,\discel_2))$ of~$[\KMtheta{1}\wedge\KMtheta{2}]$, where the latter is considered as a holomorphic function~$\HH\times\HH\to H^4(\mathcal{F}_\Lambda,\CC)\otimes\CC[D(\Lambda)^2]$ that behaves as a modular form of weight~$k=1+n/2$ for~$\weil{\Lambda}$ with respect to each symplectic variable.
    The volume~$\vol(H_{-{m_1},\discel_1}\cdot H_{-m_2,\discel_2})$ is then the Fourier coefficient of index~$((m_1,m_2),(\discel_1,\discel_2))$ of
    \[
    \int_{\mathcal{F}_\Lambda}\KMtheta{1}\wedge\KMtheta{1}\wedge\omega^{n-2}.
    \]

    By~\cite[(9.2)]{kudla-algcycles} we know that~$\KMtheta{1}(\tau_1)\wedge\KMtheta{1}(\tau_2)
    =
    \KMtheta{2}\big(\begin{smallmatrix}
        \tau_1 & 0 \\
        0 & \tau_2
    \end{smallmatrix}\big)$ for all~$\tau_1,\tau_2\in\HH$.
    By~\cite[Corollary~4.24]{Kud03}, see~\cite[Theorem~4.2]{bruinierzuffetti} for a statement in our vector-valued setting, the generating series of volumes of special cycles is an Eisenstein series, and in fact
    \[
    \int_{\mathcal{F}_\Lambda}\KMtheta{1}(\tau_1)\wedge\KMtheta{1}(\tau_2)\wedge\omega^{n-2}
    =
    \vol(\mathcal{F}_\Lambda)
    \cdot E^k_{2,\Lambda}\big(
    \begin{smallmatrix}
        \tau_1 & 0 \\
        0 & \tau_2
    \end{smallmatrix}
    \big).
    \]
    By~\cite[Satz~12]{boecherer} and~\cite[Theorem~2.20]{bruinierzuffetti}, the pullback of~$E^k_{2,\Lambda}$ is
    \begin{align*}
        E^k_{2,\Lambda}\big(
    \begin{smallmatrix}
        \tau_1 & 0 \\
        0 & \tau_2
    \end{smallmatrix}
    \big)
    &=
    E^k_{1,\Lambda}(\tau_1)\otimes E^k_{1,\Lambda}(\tau_2)
    + 2 \sum_{\discelone\in D(\Lambda)}
    \sum_{\substack{m\in\ZZ - q(\discelone) \\ m>0}}
    c_{m,\discelone}(E^k_{1,\Lambda})
    \\
    &\quad \times
    \sum_{w_1\in \ZZ_{>0}}
    e(m w_1^2 \tau_1)
    \mathfrak{e}_{w_1\discelone}
    \otimes\sum_{\substack{w_2\in\ZZ_{>0} \\ \gcd(w_1,w_2)=1}}
    \mathcal{P}^\Lambda_{mw_2^2,w_2\discelone}(\tau_2).
    \end{align*}
    Therefore, the Fourier coefficient of index~$((m_1,m_2),(\discelone,\disceltwo))$ of~$E^k_{2,\Lambda}\big(
    \begin{smallmatrix}
        \tau_1 & 0 \\
        0 & \tau_2
    \end{smallmatrix}
    \big)$ is
    \begin{equation}\label{eq;Fcoefrestrsiegel}
        \begin{split}
            & c_{m_1,\discelone}(E^k_{1,\Lambda}) c_{m_2,\disceltwo}(E^k_{1,\Lambda})
            +
            2\sum_{\substack{w_1\in\ZZ_{>0} \\ w_1^2|m_1}}
            \sum_{\substack{\beta_1\in D(\Lambda) \\ w_1\beta_1=\discelone}}
            c_{m_1/w_1^2,\beta_1}(E^k_{1,\Lambda})
            \\
            &\qquad\times
            \sum_{\substack{w_2\in\ZZ_{>0} \\ \gcd(w_1,w_2)=1}}
            c_{m_2,\disceltwo}\big(
            \mathcal{P}^\Lambda_{m_1w_2^2/w_1^2,w_2\beta_1}
            \big).
        \end{split}
    \end{equation}
    This equals~$\vol(H_{-m_1,\discelone}\cdot H_{-m_2,\disceltwo})/\vol(\mathcal{F}_\Lambda)$.
    It is easy to replace this in~\eqref{eq;vol self in proof} to deduce the claim.
\end{proof}

\begin{remark}
    Combining Proposition \ref{prop:volprHeegner Siegel} with Lemma \ref{lemma:fromdegtovol} then gives a numerical condition for extremality of $P_{-m, \mu}$ analogous to Theorem \ref{thm: extremality2} using this pullback of Siegel Eisenstein series method. 
\end{remark}

\section{Cones of Heegner Divisors}
For $\Lambda$ an even lattice of signature $(2,n)$ with $n\ge 3$ splitting off two copies of the hyperbolic plane and $\mathcal{F}_\Lambda$ its associated orthogonal modular variety, the NL cone ${\rm{Eff}}^{NL}(\mathcal{F}_\Lambda)\subset {\rm{Pic}}_{\mathbb{Q}}(X)$ is the convex cone of effective $\mathbb{Q}$-linear combinations of irreducible components of Heegner divisors (known as primitive Heegner divisors) on $\mathcal{F}_{\Lambda}$.
After tensoring with $\mathbb{R}$, the NL cone ${\rm{Eff}}^{NL}(\mathcal{F}_{\Lambda})$ forms a natural subcone of the cone of pseudo-effective divisors $\overline{{\rm Eff}}(\mathcal{F}_{\Lambda})$. 

The study of NL cones was initiated in \cite{Pet15} in the case of $\mathcal{F}_{2d}$, where the question was raised of whether the containment 
\begin{equation}\label{eq: containment Pet}{\rm Eff}^{NL}(\mathcal{F}_\Lambda)\subset\overline{{\rm Eff}}(\mathcal{F}_\Lambda)\end{equation}
is in fact an equality. 
Bruinier--M\"oller \cite{BM19} showed that for an arbitary orthogonal modular variety~$\mathcal{F}_\Lambda$ as above, the cone ${\rm{Eff}}^{NL}(X)$ is always polyhedral. Thus in particular, if one does have equality in \eqref{eq: containment Pet}, it follows that the pseudo-effective cone $\overline{{\rm Eff}}(\mathcal{F}_\Lambda)$ is polyhedral. 

While the extremality conditions of this paper do not shed light on the above question for the orthogonal modular variety $\mathcal{F}_{2d}$, they do for the orthogonal modular variety~$\mathcal{F}_{\Lambda}$ partially compactifying the moduli space $\mathcal{M}_{{\rm{Kum}}_{2},2}^1$ parameterizing (quasi)-polarized hyperk\"{a}hler fourfolds of Kummer type with split polarization of degree two, which we now explain. 

A primitively polarized hyperk\"ahler manifold $(X,H)$ is of ${\rm Kum}_n$-type if $X$ is deformation equivalent to a fiber of the addition map $A^{[n+1]}\longrightarrow A$ on an abelian surface.
The moduli spaces $\mathcal{M}_{{\rm{Kum}}_{n},2d}^\gamma$ parameterize (quasi)-polarized hyperk\"{a}hler manifolds of ${\rm Kum}_n$-type with polarization of degree $2d$ and divisibility $\gamma$.
In the case  $d=1$ and $\gamma=1$ resp.~$2$, these moduli spaces are partially compactified by the orthogonal modular variety~$\mathcal{F}_\Lambda$ (see \cite[Lemma 4.7]{BBFW24}), where 
$\Lambda=U^{\oplus 2}\oplus Q_d$ (resp.~$Q_t$ with $d=4t-(n+1)$) and
\[
Q_d=\mathbb{Z}\ell+\mathbb{Z}\delta=\left(\begin{array}{cc}-2d&0\\0&-2(n+1)\end{array}\right)\;\;\;\hbox{and}\;\;\;Q_t=\mathbb{Z} u+ \mathbb{Z}v=\left(\begin{array}{cc}-2t&(n+1)\\(n+1)&-2(n+1)\end{array}\right).
\]

When $n=1$, $d=1$, and $\gamma=2$, we have ${\rm Pic}_\QQ\left(\mathcal{F}_\Lambda\right)=\langle \lambda\rangle$ and so the equality ${\rm Eff}^{NL}(\mathcal{F}_\Lambda)=\overline{{\rm Eff}}(\mathcal{F}_\Lambda)$ holds trivially.  By contrast, when $n=1$, $d=1$, and $\gamma=1$, we have $\dim {\rm Pic}_\QQ\left(\mathcal{F}_\Lambda\right)=2$ and the extremality criterion of Theorem \ref{thm: extremality} implies (in fact the criterion $m^2d_\mu<\frac{1}{4}$ of Theorem \ref{thm: numerical bounds} suffices), as stated in Corollary \ref{prop: cone equality} of the introduction, the (non-trivial) equality 
\[
{\rm Eff}^{NL}(\mathcal{F}_\Lambda)=\overline{{\rm Eff}}(\mathcal{F}_\Lambda)
\]
in this case as well.

\begin{proof}[Proof of Corollary \ref{prop: cone equality}]
When $n=2$, $d=1$, and $\gamma=1$, we have $D(\Lambda)=\ZZ/2\times \ZZ/6$ generated by $\ell/2$ and $\delta/6$, where $d_{\ell/2}=2$ and $d_{\delta/6}=6$.
Thus the two primitive Heegner divisors $P_{-\frac{1}{4}, \frac{\ell}{2}}$ and $P_{-\frac{1}{12}, \frac{\delta}{6}}$ both satisfy the criterion for extremality $m^2d_\mu<\frac{1}{4}$ of Theorem~\ref{thm: numerical bounds}.
Since $\overline{{\rm{Eff}}}(\mathcal{F}_\Lambda)\subset {\rm{Pic}}_{\mathbb{R}}(X)$ and $\dim {\rm Pic}_{\mathbb{R}}(\mathcal{F}_\Lambda)=2$, it follows that
\begin{equation}\label{eq: NL cone equality}{\rm Eff}^{NL}(\mathcal{F}_\Lambda)=\overline{{\rm Eff}}(\mathcal{F}_\Lambda)
=
\left\langle P_{-\frac{1}{4}, \frac{\ell}{2}},P_{-\frac{1}{12}, \frac{\delta}{6}}\right\rangle_{\mathbb{R}_{\geq0}}.\qedhere
\end{equation}    
\end{proof}

\begin{remark}
   The description of ${\rm Eff}^{NL}(\mathcal{F}_\Lambda)$ in \eqref{eq: NL cone equality} was originally conjectured in \cite[Remark 4.12]{BBFW24}, but could not be proved using the bounding methods of that paper because the bounds obtained were too large to be computationally feasible. The calculation of $\overline{{\rm{Eff}}}(\mathcal{F}_\Lambda)$ and the resulting equality ${\rm Eff}^{NL}(\mathcal{F}_\Lambda)=\overline{{\rm Eff}}(\mathcal{F}_\Lambda)$ circumvent the need to directly calculate ${\rm Eff}^{NL}(\mathcal{F}_\Lambda)$ in this case. 
\end{remark}


\newpage
\printbibliography


\appendix
\section{Tables of extremal rays for unimodular and K3 lattices}

In this section, we record, for various choices of lattice $\Lambda$, the primitive Heegner divisors~$P_{-m, \mu}$ that we have verified using WeilRep satisfy the hypotheses of Theorem~\ref{thm: extremality2} and therefore are extremal. 

\begin{table}[!ht]
\begin{tabular}{| c | c |} 
\hline
$r$ such that $\Lambda=U^{\oplus 2}\oplus E_8(-1)^{\oplus r}$ & $P_m\coloneqq P_{m,0}$ extremal \\
\hline
$4\leq r\leq 7$ & $P_{-1}$ \\
\hline
$8 \leq r \leq 12$ & $P_{-1}$, $P_{-2}$ \\
\hline
$13 \leq r \leq 16$ & $P_{-1}$, $P_{-2}$, $P_{-3}$ \\
\hline
$17 \leq r \leq 20$ & $P_{-1}$, $P_{-2}$, $P_{-3}$, $P_{-4}$ \\
\hline
$21 \leq r \leq 24$ & $P_{-1}$, $P_{-2}$, $P_{-3}$, $P_{-4}$, $P_{-5}$ \\
\hline
\end{tabular}
\caption{Examples of extremal rays for $\Lambda$ unimodular.}
\label{tableoutmethod2:1}
\end{table}

\begin{table}[!ht]
\begin{tabular}{| c | c | c | c |}
\hline
 $d$ & $\dim {\rm{Pic}}_{\mathbb{Q}}\left(\mathcal{F}_{2d}\right)$ & $P_{m,\mu}$ extremal \\
 \hline
 $d=1$ & 2 & $P_{-1/4,\ell_*}$ \\ 
 \hline
 $d=2$ & 3 & $P_{-1/8,\ell_*}$, $P_{-1/2,2\ell_*}$,  \\ 
 \hline
  $d=3$ & 4 & $P_{-1/12,\ell_*}$, $P_{-1/3,2\ell_*}$  \\ 
 \hline
  $d=4$ & 4 & $P_{-1/16,\ell_*}$, $P_{-9/16,3\ell_*}$, $P_{-1/4,2\ell_*}$  \\ 
 \hline
  $d=5$ & 6 & $P_{-1/20,\ell_*}$, $P_{-9/20,3\ell_*}$, $P_{-1/5,2\ell_*}$,  \\ 
 \hline
  $d=6$ & 7 & $P_{-1/24,\ell_*}$, $P_{-1/6,2\ell_*}$, $P_{-3/8,3\ell_*}$, $P_{-1/24,5\ell_*}$, $P_{-1/2,6\ell_*}$ \\ 
  \hline
  $d=7$ & 7 & $P_{-1/28,\ell_*}$, $P_{-1/7,2\ell_*}$, $P_{-9/28,3\ell_*}$, $P_{-4/7,4\ell_*}$, $P_{-2/7, 6\ell_*}$ \\ 
  \hline
  $d=8$ & 8 & $P_{-1/32,\ell_*}$, $P_{-1/8,2\ell_*}$, $P_{-9/32,3\ell_*}$, $P_{-1/2,4\ell_*}$, $P_{-1/8,6\ell_*}$, $P_{-17/32,7\ell_*}$ \\ 
 \hline
 $d=9$ & 9 & $P_{-1/36,\ell_*}$, $P_{-1/9,2\ell_*}$, $P_{-1/4,3\ell_*}$, $P_{-4/9,4\ell_*}$, $P_{-13/36,7\ell_*}$, $P_{-1/4,9\ell_*}$ \\ 
 \hline
 $d=10$ & 10 & \makecell{$P_{-1/40,\ell_*}$, $P_{-1/10,2\ell_*}$, $P_{-9/40,3\ell_*}$, $P_{-2/5,4\ell_*}$, $P_{-5/8,5\ell_*}$, \\ $P_{-9/40,7\ell_*}$, $P_{-3/5,8\ell_*}$, $P_{-1/40,9\ell_*}$, $P_{-1/2,10\ell_*}$} \\ 
 \hline
 $d=11$ & 11 & \makecell{$P_{-1/44,\ell_*}$, $P_{-1/11,2\ell_*}$, $P_{-9/44,3\ell_*}$, $P_{-4/11,4\ell_*}$, $P_{-25/44,5\ell_*}$, \\ $P_{-5/44,7\ell_*}$, $P_{-5/11,8\ell_*}$, $P_{-3/11,10\ell_*}$} \\ 
 \hline
  $d=12$ & 12 & \makecell{$P_{-1/48,\ell_*}$, $P_{-1/12,2\ell_*}$, $P_{-3/16,3\ell_*}$, $P_{-1/3,4\ell_*}$, $P_{-25/48,5\ell_*}$, \\$P_{-1/48,7\ell_*}$, $P_{-1/3,8\ell_*}$, $P_{-11/16,9\ell_*}$, $P_{-1/12,10\ell_*}$, $P_{-25/48,11\ell_*}$} \\
  \hline
  $d=13$ & 12 & \makecell{$P_{-1/52,\ell_*}$, $P_{-1/13,2\ell_*}$, $P_{-9/52,3\ell_*}$, $P_{-4/13,4\ell_*}$, $P_{-25/52,5\ell_*}$,\\ $P_{-3/13,8\ell_*}$,$P_{-29/52,9\ell_*}$, $P_{-17/52,11\ell_*}$, $P_{-1/4,13\ell_*}$ }\\
  \hline
  $d=14$ & 14 & \makecell{$P_{-1/56,\ell_*}$, $P_{-1/14,2\ell_*}$, $P_{-9/56,3\ell_*}$, $P_{-2/7,4\ell_*}$, $P_{-25/56,5\ell_*}$,\\ $P_{-9/14,6\ell_*}$, $P_{-1/7,8\ell_*}$, $P_{-25/56,9\ell_*}$, $P_{-9/56,11\ell_*}$, $P_{-4/7,12\ell_*}$\\ $P_{-1/56,13\ell_*}$, $P_{-1/2,14\ell_*}$}\\
  \hline
  $d=15$ & 15 & \makecell{$P_{-1/60,\ell_*}$, $P_{-1/15,2\ell_*}$, $P_{-3/20,3\ell_*}$, $P_{-4/15,4\ell_*}$, $P_{-5/12,5\ell_*}$,\\ $P_{-3/5,6\ell_*}$, $P_{-1/15,8\ell_*}$, $P_{-7/20,9\ell_*}$, $P_{-2/3,10\ell_*}$, $P_{-1/60,11\ell_*}$,\\ $P_{-2/5,12\ell_*}$, $P_{-4/15,14\ell_*}$}\\
  \hline
  $d=16$ & 14 & \makecell{$P_{-1/64,\ell_*}$, $P_{-1/16,2\ell_*}$, $P_{-9/64,3\ell_*}$, $P_{-1/4,4\ell_*}$, $P_{-25/64,5\ell_*}$,\\
  $P_{-9/16,6\ell_*}$, $P_{-17/64,9\ell_*}$, $P_{-9/16,10\ell_*}$, $P_{-1/4,12\ell_*}$, $P_{-41/64,13\ell_*}$,\\ $P_{-1/16,14\ell_*}$, $P_{-33/64,15\ell_*}$ }\\
  \hline
  $d=17$ & 16 & \makecell{$P_{-1/68,\ell_*}$, $P_{-1/17,2\ell_*}$, $P_{-9/68,3\ell_*}$, $P_{-4/17,4\ell_*}$, $P_{-25/68,5\ell_*}$,\\ $P_{-9/17,6\ell_*}$, $P_{-13/68,9\ell_*}$, $P_{-8/17,10\ell_*}$, $P_{-2/17,12\ell_*}$, $P_{-33/68,13\ell_*}$, \\$P_{-21/68,15\ell_*}$, $P_{-1/4,17\ell_*}$, }\\
  \hline
  $d=18$ & 17 & \makecell{$P_{-1/72,\ell_*}$, $P_{-1/18,2\ell_*}$, $P_{-1/8,3\ell_*}$, $P_{-2/9,4\ell_*}$, $P_{-25/72,5\ell_*}$, \\$P_{-1/2,6\ell_*}$, $P_{-49/72,7\ell_*}$, $P_{-1/8,9\ell_*}$, $P_{-7/18,10\ell_*}$, $P_{-49/72,11\ell_*}$, \\$P_{-25/72,13\ell_*}$, $P_{-1/8,15\ell_*}$, $P_{-5/9, 16\ell_*}$, $P_{-1/72,17\ell_*}$, $P_{-1/2,18\ell_*}$}\\
  \hline
  $d=19$ & 17 & \makecell{$P_{-1/76,\ell_*}$, $P_{-1/19,2\ell_*}$, $P_{-9/76,3\ell_*}$, $P_{-4/19,4\ell_*}$, $P_{-25/76,5\ell_*}$,\\ $P_{-9/19,6\ell_*}$, $P_{-49/76,7\ell_*}$, $P_{-5/76,9\ell_*}$, $P_{-6/19,10\ell_*}$, $P_{-45/76,11\ell_*}$, \\$P_{-17/76,13\ell_*}$, $P_{-11/19,14\ell_*}$, $P_{-7/19,16\ell_*}$, $P_{-5/19,18\ell_*}$, $P_{-3/4,19\ell_*}$}\\
  \hline
  $d=20$ & 19 & \makecell{$P_{-1/80,\ell_*}$, $P_{-1/20,2\ell_*}$, $P_{-9/80,3\ell_*}$, $P_{-1/5,4\ell_*}$, $P_{-5/16,5\ell_*}$, \\$P_{-9/20,6\ell_*}$, $P_{-49/80, 7\ell_*}$, $P_{-1/80,9\ell_*}$, $P_{-1/4,10\ell_*}$, $P_{-41/80, 11\ell_*}$, \\ $P_{-9/80,13\ell_*}$, $P_{-9/20, 14\ell_*}$, $P_{-1/5,16\ell_*}$, $P_{-49/80,17\ell_*}$, $P_{-1/20, 18\ell_*}$, \\ $P_{-41/80, 19\ell_*}$}\\
  \hline
\end{tabular}
\caption{Examples of extremal rays on moduli spaces of K3 surfaces $\mathcal{F}_{2d}$.}
\label{tableoutmethod2:2}
\end{table}

\end{document}